\renewcommand{\epsilon}{\varepsilon}
\title[]{Monodromy Invariants and Polarization Types of Generalized Kummer Fibrations}
\author{Benjamin Wieneck}
\address{Benjamin Wieneck\\ Institut für Algebraische Geometrie\\ Leibniz Universit\"at Hannover\\Welfengarten 1\\30167 Hannover\\Germany}
\email{bwieneck@gmail.com}
\begin{document}

\thanks{Mathematics Subject Classification 32J27, 14D06, 32Q15, 53D12, 53C26, 32G13, 14D20}
\begin{abstract} In this paper a monodromy invariant for isotropic classes on generalized Kummer type manifolds is constructed. This invariant is used to determine the polarization type of Lagrangian fibrations on such manifolds -  a notion which was introduced in an earlier paper of the author. The result shows that the polarization type of a Lagrangian fibration of generalized Kummer type depends on the connected component of the moduli space. 
\end{abstract}

\maketitle
\tableofcontents

\section{Introduction} \label{intro}

In this paper we continue our study of the polarization type of Lagrangian fibrations on irreducible holomorphic symplectic manifolds which we started in \cite{benni1}.

A \emph{Langrangian fibration} $f : X \rightarrow B$ is a holomorphic map from an irreducible holomorphic symplectic manifold $X$ to a normal complex space of dimension $\frac{1}{2} \dim X$ with connected fibers such that the restriction of the holomorphic symplectic form on $X$ to the regular part of each fiber of $f$ vanishes. It is well known that all smooth fibers are abelian varieties even if $X$ is not projective. Given a smooth fiber $F$ an immediate question is to ask for natural polarizations on it which is by definition the first Chern class $H = c_1(L)$ of an ample line bundle $L$ of $F$. 

It is known that for each smooth fiber $F$ one can find a Kähler class $\omega$ on $X$ such that the restriction $\omega|_F$ is integral and primitive and hence defines a polarization on $F$, see Proposition \ref{specialpol}. An ad--hoc definition of the \emph{polarization type} of a Lagrangian fibration would be to set $\pol(f) := \pol(\omega|_F)$ where the latter one is the polarization type of the polarization on $F$ given by $\omega|_F$. It follows that this does not depend on the chosen $F$ and $\omega$ and that the polarization type stays constant in families of Lagrangian fibrations.  For a summary see also section \ref{poltypes}.

In \cite{benni1} we proved that the polarization type of Lagrangian fibrations of $\kdrei$--type is always principal. This motivates the speculation whether the polarization type only depends on the deformation class of the irreducible holomorphic symplectic manifold $X$ itself, forgetting the Lagrangian fibration. 

The purpose of this paper is to show that this is not the case, a fact which came as a surprise to us. Indeed, the following holds. 

\begin{theo}[Theorem \ref{mainpol}, Proposition \ref{bmkummer}]\label{mainkum} Let $f : X \rightarrow \IP^n$ be a Lagrangian fibration of generalized Kummer type. If $d = \Div(\lambda)$ denotes the divisibility\footnote{Here we mean with the divisibility $k = \Div(\lambda)$, the largest positive number $k$, such that $(\lambda, \cdot)/k$ is an integral form.} of $\lambda = c_1(f^\star \ko_{\IP^n}(1))$, then $d^2$ divides $n+1$ and we have for the polarization type
	$$ \pol(f) \ = \ \left(1, \ldots, 1, d, \frac{n+1}{d} \right) \, .$$
	Furthermore, for a fixed dimension $\dim X = 2n$, the divisibilities of classes $\lambda$ as above which can appear for the generalized Kummer type, are exactly the positive integers $d$ such that $d^2$ divides $n+1$. 
\end{theo}

The proof of Theorem \ref{mainkum} involves moduli theory of Lagrangian fibrations of generalized Kummer type, as for instance exploited in \cite{eyal2}. The moduli theory appears in form of what is called a \emph{monodromy invariant}. 

Let $X$ be an irreducible holomorphic symplectic manifold and consider the monodromy group $\Mon^2(X)$. A \emph{faithful monodromy invariant}, see section \ref{moninviso} and \cite[Def.~5.16]{eyalprime}, is a $\Mon^2(X)$--invariant map $\vartheta : I(X) \rightarrow \Sigma$  where $I(X) \subset H^2(X,\IZ)$ is a $\Mon^2(X)$--invariant subset and $\Sigma$ is an arbitrary set, such that the induced map $I(X)/\Mon^2(X) \rightarrow \Sigma$ is injective.

The following is a generalized Kummer analogue of E. Markman's monodromy invariant for the $\kdrei$ case, see \cite[2.]{eyal2}.

Let $X$ be of generalized Kummer type. For a fixed positive integer $d$, let us denote $I_d(X) \subset H^2(X,\IZ)$ the set of all primitive isotropic classes with divisibility $d$. For the case that $d^2$ divides $n + 1$, let $\Sigma_{n,d}$ denote the set of isometry classes of pairs $(H,w)$ such that $H$ is a lattice isometric to the lattice $L_{n,d}$ which is defined in \refb{latticelnd} and $w \in H$ is a primitive class with $(w,w) = 2n + 2$.

\begin{theo}[Theorem \ref{moninv}]\label{mainmoninv} Let $X$ be a generalized Kummer type manifold of dimension $2n$ and $d$ a positive integer such that $d^2$ divides $n+1$. There is a surjective faithful monodromy invariant $$\vartheta \ : \ I_d(X) \longrightarrow \Sigma_{n,d}$$ of the manifold $X$. 
\end{theo}

A similar result as Theorem \ref{mainmoninv} was obtained independently by G.\,Mongardi und G.\,Pacienza in \cite{monpac}. They also construct a faithful monodromy invariant function, see \cite[Lem.\,3.4]{monpac}, which requires a choice of an embedding of $H^2(X,\IZ)$  into the Mukai lattice, cf. \ref{mukailattice}. Also compare with \cite[Rem.\,3.10]{monpac} and Theorem \ref{orbit}. \\

\textbf{Structure of the paper.} In section \ref{hyperk} we give a review of the theory of hyperkähler manifolds. Section \ref{poltypes} is a summary of the author's paper \cite{benni1} about the definition of the polarization type of a Lagrangian fibration.
In section \ref{anorbit} a canonical orbit of primitive isometric embeddings from the generalized Kummer lattice into the Mukai lattice (of torus type) is constructed which is a main ingredient for the construction of the monodromy invariant which is done in the next section \ref{moninviso}.
Section \ref{bmgenkum} has the purpose to recall the construction of Beauville--Mukai systems of generalized Kummer type and to determine their polarization types. An excursion to the theory of Jacobians is needed, see subsection \ref{casejacobian}. Finally, we compute the polarization type of a Lagrangian fibration of generalized Kummer type in section \ref{computation}. \\

\textbf{Acknowledgements.} I thank my advisor Klaus Hulek, Eyal Markman and Giovanni Mongardi for helpful discussions. I thank Olivier Debarre for pointing me out the idea of the proof of Lemma \ref{multiple}.

\section{Hyperkähler Manifolds and their fibrations} \label{hyperk}

In this section we recall the basic facts about irreducible holomorphic symplectic manifolds and their fibrations which are Lagrangian.

\begin{defin}A compact Kähler manifold $X$ is called \emph{hyperkähler} or \emph{irreducible holomorphic symplectic} if $X$ is simply connected and $H^0(X, \Omega_X^2)$ is generated by a nowhere degenerate holomorphic two--form $\sigma$. \end{defin} Note that $\sigma$ is automatically symplectic since every holomorphic form on a compact Kähler manifold is closed.

The most basic example is provided by the Douady space $S^{[n]}$ of $n$ points for a K$3$ surface $S$ which parametrizes zero--dimensional subspaces of $S$ of length $n$. A. Beauville \cite{Bea84} showed that $S^{[n]}$ is an irreducible holomorphic symplectic manifold of dimension $2n$. 

In this paper we are more interested in the following example. We start with a complex two--torus $S$ and take the the Douady space $S^{[n+1]}$. This is holomorphic symplectic, but not simply connected. Then one uses the \emph{Douady--Barlet map}
$$ \rho \ : \ S^{[k]} \longrightarrow S^{(k)}\, , \ \ \ Z \longmapsto \sum\limits_{z \in Z} (\dim_{\CC} \ko_{Z,z}) z $$
which is a resolution of singularities of the \emph{symmetric product} $S^{(k)} := (S \times \cdots \times S)/\Sigma_{k}$ to obtain a morphism 
$$ S^{[n+1]} \stackrel{\rho}{\longrightarrow} S^{(n+1)} \stackrel{+}{\longrightarrow} S $$
where the last map is summation in $S$. By A.~Beauville \cite{Bea84} the fiber $S^{[[n]]} = K_n(S)$ over $0$ is an irreducible holomorphic symplectic manifold of dimension $2n$, called \emph{generalized Kummer manifold} as $S^{[[1]]}$ is the usual Kummer K$3$ surface. 

An irreducible holomorphic symplectic manifold is of \emph{$\kdrei$--type} or of \emph{generalized Kummer type} if it is deformation equivalent to $S^{[n]}$ for a K$3$ surface $S$ or to $S^{[[n]]}$ for a two--torus $S$, respectively. \\

The second cohomology $H^2(X,\IZ)$ of any irreducible holomorphic symplectic manifold $X$ admits the well known \emph{Beauville--Bogomolov--Fujiki} quadratic form $q_X$ which is non--degenerate and of signature $(3, b_2(X) - 3)$, see \cite[23.3]{huy1}. The associated bilinear form is denoted by $(\cdot, \cdot)$. On an abstract lattice we also denote the bilinear form by $(\cdot, \cdot)$. The lattice $H^2(X,\IZ)$ with the Beauville--Bogomolov--Fujiki form is a deformation invariant of the manifold $X$. For manifolds of generalized Kummer type this lattice is isometric to the abstract \emph{generalized Kummer lattice}
$$ \Lambda \ = \ \UU^{\oplus 3} \oplus \left\langle -(2+2n) \right\rangle \, ,$$
see \cite[Prop.~8]{Bea84} where $\left\langle -(2+2n)\right\rangle$ denotes the lattice of rank one with generator $l$ such that $(l,l) = -(2+2n)$ and $\UU$ the unimodular rank two hyperbolic lattice.

A \emph{marking} on an irreducible holomorphic manifold $X$ is the choice of an isometry $\eta : H^2(X,\IZ) \rightarrow \Lambda$. The pair $(X,\eta)$ is then called a \emph{marked pair} or a \emph{marked irreducible holomorphic symplectic manifold}. \\

If $X$ is a fixed irreducible holomorphic symplectic manifold, set $\Lambda := H^2(X,\IZ)$ and consider the Kuranishi family $\pi : \gX \rightarrow \Def(X)$ with $\gX_0 := \pi^{-1}(0) = X$. We will view the base $\Def(X)$ sometimes as a germ but also as a representative which we choose small enough i.e. simply connected. Then by Ehresmann's theorem, we can choose a trivialization $\Sigma : R^2 \pi_{\star}\IZ \rightarrow \Lambda_{\Def(X)}$ also called a \emph{marking} and define the \emph{local period map} by
$$ \kp : \Def(X) \longrightarrow \IP(\Lambda_{\CC}) \, , \ \ t \longmapsto [\Sigma_t(H^{2,0}(\gX_t))]$$
where $\Lambda_{\CC} := \Lambda \otimes \CC$. It takes values in the \emph{period domain of type $\Lambda$} \cite[22.3]{huy1} namely
$$\Omega_{\Lambda} \ := \ \left\{ p \in \IP(\Lambda_{\CC}) \ | \ (p,p) = 0 \text{ and } (p, \bar{p}) > 0 \right\} $$
which is connected since the signature of $q_X$ is $(3, \rk \Lambda - 3)$.
\begin{theo}[\textsc{Local Torelli},  \cite{Bea84}, 8.] The period map $\kp : \Def(X) \rightarrow \Omega_{\Lambda}$ is an open embedding. \end{theo}
Two marked pairs $(X_i, \eta_i)$, $i=1,2$, are called \emph{isomorphic} if there is an isomorphism $f : X_1 \rightarrow X_2$ such that $\eta_2 = \eta_1 \circ f^{\star}$.
There exists a \emph{moduli space of marked pairs} $\gM_{\Lambda} := \left\{ (X,\eta) \text{ marked pair } \right\}/\cong$ which can be constructed by gluing all deformation spaces $\Def(X)$ of irreducible holomorphic symplectic manifolds $X$ with $H^2(X,\IZ)$ isometric to $\Lambda$. This gives a non--Hausdorff complex manifold of dimension $\rk \Lambda - 2$. The \emph{global period mapping}  
$$ \kp \ : \ \gM_{\Lambda} \longrightarrow \Omega_{\Lambda} \, , \ \ \ (X,\eta) \longmapsto [\eta(H^{2,0}(X))] $$ 
is locally given by $\kp : \Def(X) \rightarrow \Omega_{\Lambda}$ and hence is again a local biholomorphism by the Local Torelli. If one takes an arbitrary connected component $\gM^{\circ}_{\Lambda}$ of $\gM_{\Lambda}$ then by a result of D. Huybrechts \cite[Prop. 25.12]{huy1} the restriction $\kp : \gM^{\circ}_{\Lambda} \rightarrow \Omega_{\Lambda}$ is surjective. 

If $L$ denotes a line bundle on $X$ by abuse of notation we also denote the universal family of the pair $(X,L)$ by $\pi : \gX_L \rightarrow \Def(X,L)$ which comes with an universal line bundle $\kl$ on $\gX_L$ such that $(\gX_L)_0 = X$ and $\kl_0 = L$, see \cite[Cor. 1]{Bea84}. We consider again $\Def(X,L)$ as a germ but as well as a proper space. A representative of $\Def(X,L)$ is locally given by $(c_1(L), \cdot) = 0$ in $\Omega_{\Lambda}$ hence it is a smooth hypersurface in $\Def(X)$, see \cite[26.1]{huy1} and one defines $\gX_L$ as the preimage of it under $\pi$. The family $\pi : \gX_L \rightarrow \Def(X,L)$ is the restriction of the Kuranishi family $\pi : \gX \rightarrow \Def(X)$ to $\gX_L$ and $\Def(X,L)$. \\

\begin{ueber}\textbf{Lagrangian fibrations.} Due to D. Matsushita much is known about nontrivial fiber structures on irreducible holomorphic symplectic manifolds.
	\begin{theo}[\textsc{Matsushita}, \cite{Mat99}, \cite{Mat00}, \cite{Mat01}, \cite{Mat03}]\label{mats} Let $f : X \rightarrow B$ be a surjective holomorphic map with connected fibers from an irreducible holomorphic symplectic manifold $X$ of dimension $2n$ to a normal complex space $B$ such that $0 < \dim B < 2n$. Then the following statements hold.
		\begin{enumerate} 
			\item $B$ is projective of dimension $n$ and its Picard number is $\rho(B) = 1$.
			\item For all $t \in B$, the fiber $X_t \coloneqq f^{-1}(t)$ is a Lagrangian subspace \ie $\sigma|_{X^{\text{\emph{reg}}}_t} = 0$ where $X^{\text{\emph{reg}}}_t$ denotes the smooth part of $X_t$.
			\item If $X_t$ is smooth then it is a projective complex torus \ie an abelian variety. \end{enumerate}
	\end{theo} 
	Such a fibration $f : X \rightarrow B$ as in the Theorem is called a \emph{Lagrangian fibration}. If $X$ is a $\kdrei$--type manifold then we call $f : X \rightarrow B$ a \emph{$\kdrei$--type fibration}. \\
	
	If the base of the Lagrangian fibration is smooth even more is known due to a deep result of J.-M. Hwang which was recently slighty generalized by C. Lehn and D. Greb to the non--projective case.
	\begin{theo}[\textsc{Hwang}, \cite{Hwa08}, \cite{LeGr13}] Let $f : X \rightarrow B$ be a Lagrangian fibration such that $B$ is smooth and $\dim X = 2n$. Then $B \cong \IP^n$. \end{theo} If $f : X \rightarrow B$ is a $\kdrei$--type fibration then E. Markman \cite[Thm. 1.3, Rem. 1.8]{eyal1} in combination with a result of D. Matsushita \cite[Thm. 1.2, Cor. 1.1]{matiso} has shown that $B \cong \IP^n$ without assuming smoothness of $B$. By \cite[Appendix]{yoshibase} also in combination with \cite[Thm. 1.2, Cor. 1.1]{matiso} this holds for Lagrangian fibrations of generalized Kummer type. 

The basic example of a Lagrangian fibration on a generalized Kummer manifold can be obtained as follows. Let $f : S \rightarrow E$ be a surjective holomorphic map where $S$ is a two torus and $E$ is an elliptic curve.  With the of the Douady--Barlet map we have a map
$$S^{[[n]]} \hookrightarrow S^{[n+1]} \stackrel{\rho}{\longrightarrow} S^{(n+1)} \stackrel{f \times \cdots \times f}{\longrightarrow} E^{(n+1)} \cong \IP^n \times E \, .$$
This map and the projection from $\IP^n \times E$ to $\IP^n$ defines a Lagrangian fibration $S^{[[n]]} \rightarrow \IP^n$ by Matsushita's Theorem \ref{mats}. Let $F$ denote a smooth fiber of $p$, then the fiber of the Lagrangian fibration $S^{[[n]]} \rightarrow \IP^n$ is isomorphic to the abelian subvariety of $F^{n+1}$ given by the equation $x_1 + \ldots + x_{n+1} = 0$ for $(x_1, \ldots, x_{n+1}) \in F^{n+1}$. 

Note that two--dimensional Lagrangian fibrations are exactly the elliptic K$3$ surfaces.

\begin{defin}\label{deflf} \begin{enumerate} \item A \emph{family of Lagrangian fibrations} over a connected complex space $S$ with finitely many irreducible components is an $S$--morphism 
		$$   \xymatrix{  \kX \ar[rr]^{\phi} \ar[dr] & & P \ar[dl] \\ 
			&	S & 		   } $$
		where $\kX \rightarrow S$ is a family of irreducible holomorphic symplectic manifolds and $P \rightarrow S$ is a family of projective varieties such that for every $s \in S$ the restriction $\phi|_{\kX_s} : \kX_s \rightarrow P_s$ to the irreducible homorphic symplectic manifold $\kX_s$ is a Lagrangian fibration. 
		\item Two Lagrangian fibrations $f_1$ and $f_2$ are \emph{deformation equivalent} if there is a family of Lagrangian fibrations $\phi$ over a connected complex space $S$ containing $f_1$ and $f_2$ \ie there are points $t_i \in S$ such that $\phi_{t_i} = f_i$, $i = 1,2$.
	\end{enumerate}
\end{defin}

\begin{defin}\cite[5.2]{eyalprime}\label{pairdef} Let $X_i$, $i = 1, 2$, denote two irreducible holomorphic symplectic manifolds, $L_i$ holomorphic line bundles on $X_i$ and $e_i$ classes in $H^2(X_i,\IZ)$. \begin{enumerate}
		\item The pairs $(X_1,e_1)$ and $(X_2, e_2)$ are called \emph{deformation equivalent} if there exists a familty $\pi : \kX \rightarrow S$ of irreducible holomorphic symplectic manifolds over a connected complex space $S$ with finitely many irreducible components, a section $e$ of $R^2 \pi_{\star} \IZ$, points $t_i$ in $S$ such that $\kX_{t_i} = X_i$ and $e_{t_i} = e_i$.
		\item The pairs $(X_1, L_1)$ and $(X_2, L_2)$ are called \emph{deformation equivalent} if there exists a family $\pi : \kX \rightarrow S$ of irreducible holomorphic symplectic manifolds over a connected complex space $S$ with finitely many irreducible components, a line bundle $\kl$ on $\kX$, points $t_i$ in $S$ such that $\kX_{t_i} = X_i$ and $\kl_{\kX_{t_i}} = L_i$. 
	\end{enumerate}
\end{defin}

\begin{rem} Note that we can reformulate (ii) of Definition \ref{pairdef} as the following. \begin{itemize}
		\item The pairs $(X_1, L_1)$ and $(X_2, L_2)$ are called \emph{deformation equivalent} if there exists a family $\pi : \kX \rightarrow S$ of irreducible holomorphic symplectic manifolds over a connected complex space $S$ with finitely many irreducible components, a section $e$ of $R^2\pi_{\star} \IZ$ which is everywhere of Hodge type $(1,1)$, points $t_i$ in $S$ such that $\kX_{t_i} = X_i$ and $e_{t_i} = c_1(L_i)$.
	\end{itemize}
	Clearly, $e_t \coloneqq c_1(\kl_t)$ would give such a section. Conversely, given a section $e$ as in the alternative definition, we get a line bundle $L_t$ on $\kx_t$ corresponding to $e_t \in  H^{1,1}(\kx_t, \IZ)$ with respect to the isomorphism $\Pic(\kx_t) \cong H^{1,1}(\kx_t, \IZ)$ since $\kx_t$ is irreducible holomorphic symplectic. Then the Kuranishi family of the pair $(\kx_t, L_t)$ gives an universal line bundle on the respective total space for every $t \in S$. Those line bundles glue to a line bundle $\kl$ on $\kx$ with the property $c_1(\kl_t) = e_t$.\end{rem}

		\begin{prop}\label{defogen} Let $f_i : X_i \rightarrow \IP^n$, $i = 1,2$, denote two Lagrangian fibrations of generalized Kummer or $\kdrei$--type and set $L_i \coloneqq f_i^{\star}\ko_{\IP^n}(1)$. The Lagrangian fibrations $f_i$ are deformation equivalent in sense of \emph{Definition \ref{deflf}}, if and only if the pairs $(X_i, L_i)$ are deformation equivalent. 	\end{prop}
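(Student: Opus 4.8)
The plan is to prove the two implications separately; that a family of Lagrangian fibrations induces a deformation equivalence of the pairs $(X_i,L_i)$ is routine, and the substance lies in the converse. For the easy direction I would take a family of Lagrangian fibrations $\phi:\kX\to P$ over a connected base $S$ with $\phi_{t_i}=f_i$ and observe that on each fibre $P_s\cong\IP^n$ the group $H^2(P_s,\IZ)\cong\IZ$ has a distinguished ample generator, which the monodromy of $R^2(P\to S)_\star\IZ$ must preserve; these generators therefore glue to a global section of $R^2(P\to S)_\star\IZ$, and its pullback by $\phi$ is a section $e$ of $R^2\pi_\star\IZ$ that is everywhere of Hodge type $(1,1)$ and satisfies $e_{t_i}=c_1(L_i)$. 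By the reformulation of Definition \ref{pairdef}(ii) in the Remark above, this is precisely a deformation equivalence of $(X_1,L_1)$ and $(X_2,L_2)$.

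For the converse, suppose the pairs are deformation equivalent. It suffices to treat the generalized Kummer case (the $\kdrei$ case is \cite{benni1}) and, restricting to an irreducible component of the connecting base, to assume it irreducible. Parallel transport along the connecting family equips the $X_i$ with markings $\eta_i:H^2(X_i,\IZ)\to\Lambda$ lying in one connected component $\gM^\circ_\Lambda$ and with a common class $e:=\eta_1(c_1(L_1))=\eta_2(c_1(L_2))$, which is primitive, and isotropic by the Fujiki relations since $c_1(L_i)^{2n}=f_i^\star\bigl(c_1(\ko_{\IP^n}(1))^{2n}\bigr)=0$. Let $\mathcal{N}\subset\gM^\circ_\Lambda$ be the set of marked pairs $(X,\eta)$ for which $\eta^{-1}(e)$ is nef; both $(X_i,\eta_i)$ lie in $\mathcal{N}$, as $c_1(L_i)$ is the pullback of an ample class. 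The key step is to show that $\mathcal{N}$ is connected. Granting this, I would finish as follows: for every $(X,\eta)\in\mathcal{N}$ the class $\eta^{-1}(e)$ is nef, primitive and isotropic, hence --- by the theorems of Hwang and Greb--Lehn, by \cite{yoshibase} together with \cite{matiso}, and by the validity of the hyperk\"ahler SYZ principle in the generalized Kummer case --- equals $c_1(f^\star\ko_{\IP^n}(1))$ for a Lagrangian fibration $f:X\to\IP^n$, uniquely determined by $X$. By Matsushita's work on deformations of Lagrangian fibrations, such a fibration extends to a family of Lagrangian fibrations over the open part of the Kuranishi family of the pair $(X,\eta^{-1}(e))$ on which the class stays nef; covering a path in $\mathcal{N}$ from $(X_1,\eta_1)$ to $(X_2,\eta_2)$ by finitely many such Kuranishi families of pairs and gluing the resulting families over their overlaps (legitimate by the uniqueness just noted) produces a family of Lagrangian fibrations over a connected base containing $f_1$ and $f_2$, as Definition \ref{deflf} requires.

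For the connectedness of $\mathcal{N}$, the global period map restricts to a surjection from $\{(X,\eta)\in\gM^\circ_\Lambda:\eta^{-1}(e)\in H^{1,1}(X)\}$ onto $\Omega_\Lambda\cap e^\perp$ by \cite[Prop.~25.12]{huy1}; the target is an affine-line bundle over the period domain of $e^\perp/\langle e\rangle$, which has signature $(2,\rk\Lambda-4)$ and hence two components, distinguished by which component of the positive cone the real class $\eta^{-1}(e)$ abuts --- and $\mathcal{N}$ meets only the one on which $\eta^{-1}(e)$ lies in the closure of the positive cone $\mathcal{C}_X$. Over a very general point of that component one has $\Pic(X)=\IZ\,\eta^{-1}(e)$, so $\eta^{-1}(e)$ sits on the boundary of $\mathcal{C}_X$ with no wall and no flopping contraction obstructing nefness; such points are dense, their marked pairs are unique, they lie in $\mathcal{N}$, and they form a connected subset $\mathcal{N}^{\mathrm{gen}}$. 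Finally, any $(X,\eta)\in\mathcal{N}$ is joined to $\mathcal{N}^{\mathrm{gen}}$ inside $\mathcal{N}$ along a very general path in $\Def(X,\eta^{-1}(e))$ issuing from it: the Picard number only drops along such a path, so no new curves appear, and the square-zero class $\eta^{-1}(e)$, being nef at the start, stays nef.

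I expect the main obstacle to be exactly this connectedness claim together with the passage from a path in the moduli space to an honest family of Lagrangian fibrations. The delicate point is that nefness is not an open condition, so the fibration cannot simply be propagated over an arbitrary neighbourhood; routing the connecting path through the Picard-rank-one locus $\mathcal{N}^{\mathrm{gen}}$ is what makes the gluing of Kuranishi families of pairs go through, and one must keep track of the positive-cone orientation in order to stay in the correct component of the period domain. The non-Hausdorffness of $\gM_\Lambda$ causes no trouble along $\mathcal{N}^{\mathrm{gen}}$, where the Picard rank is one and the hyperk\"ahler manifold over a period point is unique.
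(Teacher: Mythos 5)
Your argument is correct and is essentially the paper's own: the proof here simply defers to \cite[Prop.~3.9]{benni1}, which is exactly the dictionary you describe between families of Lagrangian fibrations and deformations of the pairs $(X_i,L_i)$, with the connectedness of your nef locus $\mathcal{N}=\gU^{\circ}_{\lambda^{\bot}}$ being the separately stated Theorem~\ref{modulil} and the propagation of the fibration over Kuranishi families of pairs coming from Matsushita's deformation result together with the SYZ statement from \cite{matiso} and \cite{yoshibase}. The only real difference is that you re-derive the connectedness of $\gU^{\circ}_{\lambda^{\bot}}$ (via openness of the nef locus and density of the Picard-rank-one locus) instead of quoting it.
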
 
		\begin{proof} The proof is exactly the same as in \cite[Prop. 3.9]{benni1}. There everything is stated for the $\kdrei$--type, but it carries over to the generalized Kummer type word by word. 	\end{proof}

\begin{lem}\cite[Lem.~3.5]{benni1} \label{liso}Let $f : X \rightarrow B$ be a Lagrangian fibration and let $L \coloneqq f^{\star}A$ be the pullback of a line bundle $A$ on $B$. \begin{enumerate}
		\item $L$ is isotropic with respect to the Beauville--Bogomolov quadratic form. 
		\item If $A$ admits nontrivial sections then $L$ is nef. 
		\item If $X$ is of $\kdrei$ or generalized Kummer type and $A$ is primitive, then $L$ is primitive.
	\end{enumerate}\end{lem}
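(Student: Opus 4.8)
The plan is to handle the three parts in order of increasing difficulty; the first two are essentially formal, while (iii) carries the real content.

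For (i), I would apply the Fujiki relation $\int_X \alpha^{2n} = c_X\, q_X(\alpha)^n$ (with $c_X > 0$ the Fujiki constant) to $\alpha = L = f^\star A$. By the projection formula $\int_X (f^\star A)^{2n} = \int_X f^\star\!\big(c_1(A)^{2n}\big)$, and $c_1(A)^{2n}$ lives in $H^{4n}(B,\IZ)$, which vanishes since $B$ has dimension $n$ (Theorem \ref{mats}(1)) and $4n > 2n$ for $n \geq 1$. Hence $q_X(L)^n = 0$, and since $q_X$ is non-degenerate this gives $q_X(L) = 0$. For (ii), recall that by Theorem \ref{mats}(1) the base $B$ is projective of Picard number one. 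If $A$ carries a nonzero section it is effective, hence numerically a non-negative multiple of the ample generator of $\operatorname{Pic}(B)_{\IR}$ (an effective divisor meets a complete intersection of ample divisors non-negatively), so $A$ is nef; the pullback of a nef class under the proper morphism $f$ is nef, so $L$ is nef. Equivalently: for an irreducible curve $C \subset X$ either $f$ contracts $C$ and $L\cdot C = 0$, or $L\cdot C = \deg(f|_C)\,(A\cdot f(C)) \geq 0$.

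For (iii), by \cite{eyal1} together with \cite{matiso} in the $\kdrei$ case, and by \cite{yoshibase} (with \cite{matiso}) in the generalized Kummer case, the base is $B \cong \IP^n$, so primitivity of $A$ just means $A \cong \ko_{\IP^n}(\pm 1)$, and we must show $L := f^\star\ko_{\IP^n}(1)$ is not a proper integral multiple. I would reduce this to primitivity of the fiber class: if $L = mM$ with $M \in H^2(X,\IZ)$ and $m \geq 2$, then by part (i) $M$ is again nef and isotropic, and raising to the $n$-th power gives $L^n = m^n M^n$ in $H^{2n}(X,\IZ)$; but $L^n = f^\star\!\big(\ko_{\IP^n}(1)^n\big) = [F]$ is the class of a smooth fiber $F$, so it suffices to know that $[F]$ is a primitive element of $H^{2n}(X,\IZ)$. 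By Poincar\'e duality this is equivalent to exhibiting an integral homology class meeting $F$ transversally in one point, e.g.\ a degree-one multisection of $f$ over a general line $\ell \subset \IP^n$.

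The main obstacle is precisely this last point: indivisibility of $L$ is not formal — a nef isotropic class can a priori be a proper multiple of another integral nef isotropic class inducing the same Lagrangian fibration through a higher power of its linear system — so one genuinely has to use the structure of Lagrangian fibrations of $\kdrei$ and generalized Kummer type. Here I would argue by deformation invariance: indivisibility of an integral $(1,1)$-class is constant in a family of Lagrangian fibrations, since the class is a flat section of $R^2\pi_\star\IZ$ and monodromy acts through lattice isometries; and by the deformation theory recalled in Proposition \ref{defogen} every such fibration degenerates to a standard model — a Beauville--Mukai system on a K3 surface in the $\kdrei$ case (an elliptic K3 surface when $n=1$), and one of the Beauville--Mukai systems on an abelian surface discussed in section \ref{bmgenkum} in the generalized Kummer case, these models realizing all the relevant deformation components by Proposition \ref{bmkummer} — in which the pullback of $\ko_{\IP^n}(1)$, described explicitly via the Mukai lattice, and hence the fiber class, is primitive by inspection. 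Combining the explicit check with the deformation invariance yields (iii).
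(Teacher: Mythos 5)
Parts (i) and (ii) of your proposal are correct: the Fujiki-relation computation (pulling back $c_1(A)^{2n}$ from the $n$-dimensional base) and the Picard-number-one argument for nefness of an effective class are exactly the standard arguments, and they match what the paper simply cites from \cite[Lem.~3.5]{benni1} without reproving.

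Part (iii) is where the real content lies, and your proposed route has a genuine circularity relative to this paper's logical structure. You want to deduce primitivity of $L$ from the assertion that every Lagrangian fibration of $\kdrei$ or generalized Kummer type deforms to an explicit Beauville--Mukai model, plus deformation invariance of primitivity. But in this paper the statement that every such fibration reaches a Beauville--Mukai model is precisely the output of Theorem \ref{mainpol} (via Proposition \ref{computeinvariant}, Proposition \ref{bmkummer} and Proposition \ref{defogen}), and the proof of Theorem \ref{mainpol} opens by invoking Lemma \ref{liso} to conclude that $\lambda = c_1(L)$ is \emph{primitive} and isotropic, so that $\lambda \in I_d(X)$ and the faithful monodromy invariant $\vartheta$ --- which is only defined on primitive isotropic classes --- can be applied. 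Proposition \ref{defogen}, which you cite, gives no degeneration to a standard model at all; it only translates deformation equivalence of fibrations into deformation equivalence of pairs $(X,L)$. So the tool you reach for sits strictly downstream of the statement you are proving. The paper sidesteps this by quoting \cite[Lem.~3.5]{benni1}, and the one external ingredient it declares necessary for the Kummer case is Matsushita's \cite[Cor.~1.1]{matiso}, a statement about a single nef isotropic class rather than about the classification of fibration components. A non-circular argument in that spirit works directly with the putative primitive root: if $L = mM$ with $m \geq 2$, then $M$ is a primitive, nef, isotropic class, Matsushita's result lets one deform the pair so that $M$ itself induces a Lagrangian fibration over $\IP^n$, and comparing $h^0(X,L) = h^0(\IP^n, \ko_{\IP^n}(1)) = n+1$ with the dimension of $H^0(\IP^n,\ko_{\IP^n}(m))$ yields a contradiction. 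Your preliminary reduction to primitivity of the fiber class $[F] = L^n$ in $H^{2n}(X,\IZ)$ is valid as far as it goes, but, as you yourself note, you cannot produce the required degree-one multisection, and the fallback you then choose is not available at this point in the paper.
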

	\begin{proof} The first two statements are contained in \cite[Lem.~3.5]{benni1}. The third statement is formulated for the $\kdrei$--type, but the proof works also in the generalized Kummer case with use of \cite[Cor.~1.1]{matiso} and $b_2(X) \geq 3$.	\end{proof}

\end{ueber}

\begin{ueber}\textbf{Orientation.} We summarize section 4.~of \cite{eyal1}.

Let $b_2 > 0$ a positive integer and $\Lambda$ be an even lattice of signature $(3, b_2 - 3)$. Define
$$\widetilde{\kc}_{\Lambda} \ := \ \left\{ x \in \Lambda_{\IR} \ | \ (x,x) > 0 \right\} \, .$$
We have the following.
\begin{lem}\label{wcone} \cite[Lem. 4.1]{eyal1} If $W \subset \Lambda_{\IR}$ is a three dimensional subspace such that the bilinear form of $\Lambda$ is positive definite on it, then $W \setminus \{ 0 \}$ is a deformation retract of $\ctilde_{\Lambda}$. Therefore $H^2(\ctilde_{\Lambda},\IZ) \cong \IZ$ is a free abelian group of rank one.  The reflection $R_u$ for $u \in \Lambda$ with $(u,u) \neq 0$ given by $$ R_u(x) \ := \ (x,x) - 2 \frac{(u,x)}{(u,u)} u \, , $$ acts on $H^2(\widetilde{\kc}_{\Lambda},\IZ)$ \begin{itemize} \item as $+1$  if $(e,e) < 0$ and \item as $-1$ if $(e,e) > 0$, \end{itemize} therefore it defines a generator of $H^2(\widetilde{\kc}_{\Lambda},\IZ)$. \end{lem}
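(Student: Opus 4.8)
The plan is to realise $\widetilde{\kc}_{\Lambda}$ as a space that strongly deformation retracts onto $W \setminus \{0\}$ by an explicit homotopy, to read off the cohomology from $W \setminus \{0\} \simeq S^2$, and then to compute the action of $R_u$ by choosing the auxiliary positive $3$-space to be $R_u$-invariant and reducing to a standard fact about linear maps of $\IR^3$.

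For the retraction: since the bilinear form is positive definite on $W$ and has signature $(3, b_2 - 3)$, the orthogonal splitting $\Lambda_{\IR} = W \oplus W^{\perp}$ has the form negative definite on $W^{\perp}$. Write $x = x' + x''$ with $x' \in W$, $x'' \in W^{\perp}$; orthogonality gives $(x,x) = (x',x') + (x'',x'')$ with $(x',x') \geq 0$ and $(x'',x'') \leq 0$, so every $x \in \widetilde{\kc}_{\Lambda}$ has $x' \neq 0$, and $W \setminus \{0\} \subset \widetilde{\kc}_{\Lambda}$. The map $H(x,t) = x' + (1-t)\,x''$ satisfies $(H(x,t),H(x,t)) = (x',x') + (1-t)^2 (x'',x'') \geq (x,x) > 0$, so it is a homotopy $\widetilde{\kc}_{\Lambda} \times [0,1] \to \widetilde{\kc}_{\Lambda}$ from the identity to $r := H(\cdot,1)$, which lands in $W \setminus \{0\}$, and $H(x,t) = x$ for all $x \in W \setminus \{0\}$. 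Hence $\widetilde{\kc}_{\Lambda}$ strongly deformation retracts onto $W \setminus \{0\}$; in particular the inclusion $\iota : W \setminus \{0\} \hookrightarrow \widetilde{\kc}_{\Lambda}$ is a homotopy equivalence, and as $W \cong \IR^3$ one obtains $H^{k}(\widetilde{\kc}_{\Lambda},\IZ) \cong H^{k}(S^2,\IZ)$ for all $k$; in particular $H^2(\widetilde{\kc}_{\Lambda},\IZ) \cong \IZ$.

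For the reflection, $R_u$ preserves $(\cdot,\cdot)$ and hence $\widetilde{\kc}_{\Lambda}$, so it induces an automorphism $R_u^{\star}$ of $H^2(\widetilde{\kc}_{\Lambda},\IZ) \cong \IZ$, which is multiplication by $+1$ or $-1$. To evaluate it I would take the auxiliary $W$ to be $R_u$-invariant. If $(u,u) < 0$ — which forces $b_2 \geq 4$ — then $u^{\perp} \subset \Lambda_{\IR}$ has signature $(3, b_2 - 4)$, hence contains a positive definite $3$-space $W$; since $R_u$ fixes $u^{\perp}$ pointwise, $R_u|_W = \mathrm{id}$. If $(u,u) > 0$, then $u^{\perp}$ has signature $(2, b_2 - 3)$; choosing a positive definite plane $W_0 \subset u^{\perp}$ and setting $W := \IR u \oplus W_0$ gives an $R_u$-invariant positive definite $3$-space on which $R_u$ acts as $-1$ on $\IR u$ and as $+1$ on $W_0$, so $\det(R_u|_W) = -1$. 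In both cases $R_u(W \setminus \{0\}) = W \setminus \{0\}$ and $R_u \circ \iota = \iota \circ (R_u|_{W \setminus \{0\}})$; applying $H^2(-,\IZ)$ and using that $\iota^{\star}$ is an isomorphism shows $R_u^{\star}$ equals, as an element of $\{\pm 1\}$, the automorphism induced on $H^2(W \setminus \{0\},\IZ) \cong H^2(S^2,\IZ)$ by the linear map $R_u|_W$ of $W \cong \IR^3$. Since any linear automorphism $A$ of $\IR^3$ acts on $H^2(\IR^3 \setminus \{0\},\IZ)$ by $\mathrm{sign}(\det A)$ — homotope $A$ inside $\mathrm{GL}_3(\IR)$ to the identity or to a coordinate reflection, the latter reversing the orientation of $S^2$ — we conclude $R_u^{\star} = +1$ when $(u,u) < 0$ and $R_u^{\star} = -1$ when $(u,u) > 0$. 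In the latter case $R_u^{\star}$ is the nontrivial automorphism of the infinite cyclic group $H^2(\widetilde{\kc}_{\Lambda},\IZ)$, which is the final assertion of the lemma.

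Each step is elementary; the one place requiring care is the identity $R_u^{\star} = (R_u|_{W \setminus \{0\}})^{\star}$. The retraction $H$ is not $R_u$-equivariant even when $W$ is $R_u$-invariant, but the inclusion $\iota$ is strictly $R_u$-equivariant and a homotopy equivalence, so naturality of cohomology suffices, and over $\IZ$ the conjugation in $(\iota^{\star})^{-1}(R_u|_{W \setminus \{0\}})^{\star}\iota^{\star}$ is invisible. The remaining point to check is the signature bookkeeping for $u^{\perp}$ in the two cases, which is routine.
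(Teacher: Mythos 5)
Your proposal is correct and complete. The paper itself gives no proof of this lemma, citing \cite[Lem.~4.1]{eyal1} instead, and your argument is essentially the standard one found there: the orthogonal splitting $\Lambda_{\IR}=W\oplus W^{\perp}$ with the explicit linear homotopy for the strong deformation retraction, followed by computing $R_u^{\star}$ on an $R_u$--invariant positive three--space via $\mathrm{sign}(\det R_u|_W)$. The one genuinely delicate point --- that one may choose $W$ adapted to $u$ and transport the computation through the equivariant inclusion $\iota$ rather than through the (non--equivariant) retraction --- is handled correctly.
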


In particular, the Lemma implies that $\ctilde_{\Lambda}$ is connected, as $H_0(\ctilde_{\Lambda}, \IZ) = H_0(W \setminus \{ 0 \}, \IZ) = \IZ$.

\begin{defin} \label{orientationclass}
	An \emph{orientation of $\ctilde_{\Lambda}$} is a choice of a generator of $H^2(\widetilde{\kc}_{\Lambda},\IZ) \cong \IZ$.
\end{defin}

By speaking of oriented isometries of the lattice $\Lambda$, we mean isometries which preserve the orientation of $\ctilde_{\Lambda}$ in sense of the definition above: every isometry $g : \Lambda \rightarrow \Lambda$ induces a homeomorphism $g : \ctilde_{\Lambda} \rightarrow \ctilde_{\Lambda}$, therefore we have a morphism 
\begin{align}\label{spinornorm}
\OO(\Lambda) & \longrightarrow \Aut(H^2(\ctilde_{\Lambda}, \IZ)) \cong \{ \pm 1 \} \\
g & \longmapsto g^{\star} \, . \notag
\end{align}

\begin{defin}\label{orientpreserve} The morphism in \refb{spinornorm} above is also called \emph{spinor norm}. Its kernel is denoted by $\OO^+(\Lambda)$ and isometries in it are called \emph{orientation preserving}. \end{defin}

For a period $p \in \Omega_{\Lambda}$ let $\Lambda(p)$ denote the integral Hodge structure of weight two of $\Lambda$ determined by the period $p$, that is
\begin{equation}\label{hsperiod} \Lambda^{2,0}(p) = p \, , \ \ \ \Lambda^{0,2}(p) = \bar{p} \ \ \text{ and } \ \ \Lambda^{1,1}(p) = \left\{ x \in \Lambda_{\CC} \ | \ (x,p) = (x, \bar{p}) = 0\right\} \, .\end{equation}
As in the geometric situation, we also set $$\Lambda^{1,1}(p, R) \ \coloneqq \ \left\{ x \in \Lambda_R \ | \ (x,p) = 0 \right\}$$ for $R \in \left\{ \IZ , \IR \right\}$. Furthermore, consider the set
\begin{equation} \label{cpzwei}  \kc'_p \coloneqq \left\{ x \in \Lambda^{1,1}(p, \IR) \ | \ (x,x) > 0 \right\}  .\end{equation} 
The restriction of the bilinear form to $\lambone(p, \IZ)$ has signature $(1, b_2 - 3)$. 
 Therefore $\kc'_p$ has two connected components.

Let $x$ be in $\kc'_p$ with $p = \CC\cdot \sigma$. We can define a subspace 
\begin{equation} W_x \ \coloneqq \ \re(p) \oplus \imm(p) \oplus \IR \cdot x \end{equation}
of $\Lambda_{\IR}$ such that the bilinear form is positive definite on it. The subspace $W_x$ of $\Lambda_{\IR}$ defines a generator of $H^2(\ctilde_{\Lambda}, \IZ) \cong \IZ$ in the following way. 

The subvector space $W_x$ has the canonical ordered basis
\begin{equation}\label{wbasis}
(\re(\sigma), \imm(\sigma), x) \, ,
\end{equation}
which defines an orientation in the ordinary sense \ie a volume form $\beta(\sigma) \coloneqq \re(\sigma)^\star \wedge \imm(\sigma)^\star \wedge x^\star$  of the manifold $W_x \setminus \{ 0 \}$. The orientation $\beta(\sigma)$ does not depend on the choice of $\sigma$, indeed we have $\beta(\lambda \sigma) = |\lambda| \beta(\sigma)$ for any $\lambda \in \CC$. Take the two sphere $\IS^2 \subset W_x \setminus \{ 0 \}$ in $W_x$. It is well known, that the basis \refb{wbasis} gives a volume form on $\IS^2$ by restricting the two form
$$ x_1 \im(\sigma)^\star \wedge x^\star + x_2 x^\star \wedge \re(\sigma)^\star + x_3 \re(\sigma)^\star \wedge \im(\sigma)^\star    $$
to $\IS^2$, where $x_1, x_2, x_3$ are the standard coordinates with respect to the basis \refb{wbasis}. Use
\begin{equation}\label{obtainorientation}
H^2(\IS^2, \IZ) = H^2(W_x \setminus \{ 0 \}, \IZ) = H^2(\ctilde_{\Lambda}, \IZ)\end{equation} to obtain a generator of  $H^2(\ctilde_{\Lambda}, \IZ)$ \ie an orientation in sense of Definition \ref{orientationclass}. Obviously we end up with the other generator, if we change the orientation of $W_x$ given by the basis \refb{wbasis}.

\begin{prin} \label{principle2} An element $x$ in $\kc'_p$ for a period $p \in \Omega_{\Lambda}$ determines a generator of $H^2( \ctilde_{\Lambda}, \IZ) \cong \IZ$ \ie an orientation of $\ctilde_{\Lambda}$. The two generators are distinguished by the two connected components of $\kc'_p$. Therefore a connected component of $\kc'_p$ determines an orientation of $\ctilde_{\Lambda}$.
\end{prin}

\end{ueber}

\begin{ueber}\textbf{The geometric situation.}\label{geomsit} Let $\gM_{\Lambda}$ denote the moduli space of isomorphism classes of marked pairs $(X, \eta)$ of type $\Lambda$ \ie $X$ is an irreducible holomorphic symplectic manifold and $\eta : H^2(X,\IZ) \rightarrow \Lambda$ is a marking. Choose a connected component $\gM^{\circ}_{\Lambda}$ of $\gM_{\Lambda}$. 
	Recall that for $(X,\eta) \in \gM^{\circ}_{\Lambda}$ there is a canonical choice for the connected component of $$ \kc'_X \ \coloneqq \ \left\{ x \in H^{1,1}(X,\IR) \ | \ (x,x) > 0 \right\}$$ namely the \emph{positive cone} $\kc_X$ which contains the Kähler cone $\kk_X$ of $X$. Therefore, by Principle \ref{principle2} 
	$$ \ctilde_X \ \coloneqq \ \ctilde_{H^2(X,\IZ)} \ = \ \{ x \in H^2(X, \IR) \ | \ (x,x) > 0 \} $$ 
	has a \emph{natural orientation}, which determines an orientation in sense of Definition \ref{orientationclass} of $\ctilde_{\Lambda}$ via the homeomorphism  $\eta : \ctilde_X \cong \ctilde_{\Lambda}$. 
	
	\begin{defin}\label{compatible} We will refer to the orientation of $\tilde{\kc}_{\Lambda}$ (in sense of Definition \ref{orientationclass}) which is induced by the marking $\eta$ and the natural orientation of $\ctilde_X$ for some (hence for all) marked pair $(X,\eta)$ in $\gM^{\circ}_{\Lambda}$, as the orientation \emph{compatible} to the connected component $\gM^{\circ}_{\Lambda}$ of the moduli of marked pairs.   \end{defin}

	Consider the period map 
	$$ \kp \ : \ \gM^{\circ}_{\Lambda} \longrightarrow \Omega_{\Lambda} \, , \ \ \ (X,\eta) \longmapsto [\eta(H^{2,0}(X))] $$ 
	and set $p \coloneqq \kp(X,\eta)$. Then  $\eta(H^{1,1}(X,\IR)) = \lambone(p,\IR)$. An orientation of $\ctilde_{\Lambda}$ determines a connected component \begin{equation}
	\label{cp} \kc_p \subset \kc'_p \end{equation} of $\kc'_p$ by Principle \ref{principle2}. Equivalently, we can characterize the orientation compatible to $\gM^{\circ}_{\Lambda}$ by the condition $\eta(\kc_X) = \kc_p$ for all $(X,\eta) \in \gM^{\circ}_{\Lambda}$ with $p = \kp(X,\eta)$.
\end{ueber}

\begin{ueber}\textbf{$\Omega^+_{\lambda^\bot}$\label{foriso} for an isotropic class.} For the following see also \cite[4.3]{eyal2}. We are still in the setting of \ref{geomsit}. Let $\lambda \in \Lambda$ be a nontrivial isotropic class. We define a hyperplane section 
	\begin{equation}\label{hypersectioniso} \Omega_{\lambda^{\bot}} \ \coloneqq \ \Omega_{\Lambda} \cap \lambda^{\bot} \ = \ \{ p \in \Omega_{\Lambda} \ | \ (p, u) = 0  \} \, . \end{equation}
	Note that the bilinear form on $\lambda^\bot \subset \Lambda_{\IR}$ is degenerate since $\lambda$ is isotropic. The hyperplane section $\Omega_{\lambda^{\bot}}$ has two connected components and we can still obtain a natural connected component of it from the geometrical situation in the following way. 
	
	For $p \in \Omega_{\lambda^\bot}$, $\lambda$ belongs to $\Lambda^{1,1}(p,\IR)$ and is contained in the boundary of one of the connected components of $\kc'_p$ since $\lambda$ is isotropic. For $(X,\eta) \in \gM^{\circ}_{\Lambda}$, either $\eta^{-1}(\lambda)$ or $\eta^{-1}(-\lambda)$ belongs to $\partial \kc_X$. We assume that the former is the case, otherwise take $-\lambda$. Then consider only periods $p$ in $\Omega_{\lambda^\bot}$ such that $\lambda$ belongs to the closure of the distinguished connected component $\kc_p$ in $\Lambda^{1,1}(p,\IR)$, see \refb{cp}, determined by the orientation of $\ctilde_{\Lambda}$ compatible to $\gM^{\circ}_{\Lambda}$ \ie
	\begin{equation}
	\label{hyperisocon}  \Omega^+_{\lambda^\bot} \ \coloneqq \ \left\{ p \in \Omega_{\lambda^\bot} \ | \ \lambda \in \partial \kc_p \right\} \end{equation}
	which is one of the connected components of $\Omega_{\lambda^{\bot}}$. Note that the only common element of the closures of the connected components of $\Omega_{\lambda^{\bot}}$ is the null vector, therefore $\Omega^+_{\lambda^\bot}$ of \refb{hyperisocon} is indeed one of the connected components of $\Omega_{\lambda^{\bot}}$. We refer to $\Omega^+_{\lambda^\bot}$ as the \emph{compatible} connected component of $\Omega_{\lambda^{\bot}}$ with respect to the chosen connected component $\gM^\circ_\Lambda$ of the moduli of marked pairs. \end{ueber}

\begin{ueber}\textbf{Monodromy.} We recall some basic definitions and state G.~Mongardi's monodromy result \cite{mon1}.
	 \begin{defin} Let $X_i$, $i = 1,2$, be two irreducible holomorphic symplectic manifolds. An isometry $P : H^2(X_1, \IZ) \rightarrow H^2(X_2, \IZ)$ is called a \emph{parallel transport operator} if there exists a family $\pi : \kx \rightarrow S$ of irreducible holomorphic symplectic manifolds, points $t_i$ such that $\kx_{t_i} = X_i$ and a continuous path $\gamma$ such that the parallel transport $P_{\gamma}$ along $\gamma$ in the local system $R^2 \pi_{\star} \IZ$ coincides with $P$. For $X := X_1 = X_2$ it is also called a \emph{monodromy operator} and the subgroup $\Mon^2(X)$ of $O(H^2(X,\IZ))$ generated by monodromy operators is called the \emph{monodromy group}. \end{defin}

Let $\Lambda$ denote a non--degenerate lattice of signature $(3,b_2 -3)$.
\begin{defin}\label{wx} Let $\kw(\Lambda)$ denote the subgroup of $\OO^+(\Lambda)$ consisting of orientation preserving isometries acting as $\pm 1$ on the discriminant $\Lambda^{\vee}/\Lambda$. Denote by $$\chi : \kw(\Lambda) \rightarrow \left\{ \pm 1 \right\}$$ the associated character. We also write $\kw(X) := \kw(H^2(X,\IZ))$
	for an irreducible holomorphic manifold $X$.	\end{defin}

For a class $u \in \Lambda$ with $(u,u) \neq 0$ we have the rational reflection $R_u : \Lambda \rightarrow \Lambda$ defined by
\begin{equation}
\label{rationalreflection}
R_u(x) \ \coloneqq \ x - 2\frac{(u,x)}{(u,u)} u  \, .\end{equation} 
If $(u,u) < 0$, then by Lemma \ref{wcone} the reflection $R_u$ is orientation preserving in sense of Definition \ref{orientpreserve} \ie contained in $\OO^+(\Lambda_{\IQ})$.

\begin{defin}\label{orientreflection} Let $\Lambda$ be a non--degenerate lattice of signature $(3, b_2 - 3)$. For a class $u \in \Lambda$ with $(u,u) \neq 0$, denote $\rho_u : \Lambda_{\IQ} \rightarrow \Lambda_{\IQ} \in \OO^+(\Lambda_{\IQ})$ the orientation preserving  isometry defined by
	$$	\rho_u \ \coloneqq \  \begin{cases} \phantom{-}R_u & \text{if } (u,u) < 0 \, , \\
	-R_u & \text{if } (u,u) > 0 \, .
	\end{cases} $$	 	\end{defin}

\begin{rem}\label{determinant} \begin{enumerate}
		\item If $(u,u) = \pm 2$, then $R_u$ and $\rho_u$ define honest integral isometries $\Lambda \rightarrow \Lambda$. 
		
		\item The action of $R_u$ on $\Lambda^\vee$ for a $h \in \Lambda^\vee$ is $$R_u(h)(x) \ = \ h(R_u(x)) \ = \ h(x) - (2 \frac{h(u)}{(u,u)} u , x)  ,$$
		\ie $R_u(h) =  h \mod \Lambda$, hence for $(u,u) = \pm 2$ the isometry $\rho_u$ is contained in $\kw(\Lambda)$. More precisely we have
		$$	\chi(\rho_u) \ = \  \begin{cases} +1 & \text{if } (u,u) < 0 \, , \\
		-1 & \text{if } (u,u) > 0 \, . \end{cases}$$
		\item The isometry $R_u$ satisfies $R_u(u) = -u$ and $R_u|_{u^{\bot}} = \id_{u^{\bot}}$, hence we have for the determinant $\det(R_u) = -1$. Therefore
		$$	\det(\rho_u) \ = \  \begin{cases} -1 & \text{if } (u,u) < 0 \, , \\
		(-1)^{b_2 + 1} & \text{if } (u,u) > 0 \, .
		\end{cases} $$
		Note that for the $\kdrei$ and generalized Kummer case $b_2$ is odd and for the O'Grady examples $b_2$ is even.
	\end{enumerate}\end{rem}
	
	\begin{theo}[\textsc{Mongardi}, {\cite[Thm. 2.3]{mon1}}]\label{giovanni} Let $X$ be a generalized Kummer $n$--type manifold. Then $\Mon^2(X)$ consists precisely of orientation preserving isometries $g \in \kw(X)$ such that $\chi(g) \cdot \det(g) = 1$. \end{theo}
	
	In particular, for a generalized Kummer manifold $X$, $\Mon^2(X)$ is an index $2$ sub group of $\kw(X)$ as $|\kw(X)/\Mon^2(X)| = |\im( \det \cdot \chi)| = 2$. 
	
	\begin{cor}\label{nevercontained} For a generalized Kummer type manifold $X$, the monodromy group $\Mon^2(X)$ is an index $2$ sub group of $\kw(X)$. The orientation preserving isometry $\rho_u \in \kw(X)$ for a class $u \in H^2(X,\IZ)$ with $(u,u) = \pm 2$ defined in \emph{Definition \ref{orientreflection}} is never contained in $\Mon^2(X)$.	
	\end{cor}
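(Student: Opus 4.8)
The plan is to deduce everything directly from Mongardi's description of the monodromy group in Theorem \ref{giovanni} together with the sign bookkeeping already carried out in Remark \ref{determinant}. By Theorem \ref{giovanni}, for a generalized Kummer type manifold $X$ the subgroup $\Mon^2(X)\subset\kw(X)$ is precisely the kernel of the map $\kw(X)\to\{\pm 1\}$, $g\mapsto\chi(g)\cdot\det(g)$. Since $\det(g)\in\{\pm 1\}$ for an integral isometry and $\chi(g)\in\{\pm 1\}$ by Definition \ref{wx}, this map is a genuine group homomorphism, so to prove both assertions of the corollary it suffices (i) to show that $\rho_u$ lands in the nontrivial coset whenever $(u,u)=\pm 2$, and (ii) to observe that such classes $u$ exist, which makes $\chi\cdot\det$ surjective and hence forces $[\kw(X):\Mon^2(X)]=2$.

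For the computation, first recall that in the generalized Kummer case $H^2(X,\IZ)\cong\Lambda=\UU^{\oplus 3}\oplus\langle-(2+2n)\rangle$, so $b_2=7$ is odd and $(-1)^{b_2+1}=1$. Now fix $u\in H^2(X,\IZ)$ with $(u,u)=\pm 2$; such $u$ exist, e.g. inside one hyperbolic summand $\UU$ the vectors $e+f$ and $e-f$ have square $2$ and $-2$ respectively. By Remark \ref{determinant}(i) the isometry $\rho_u$ is an honest integral isometry of $H^2(X,\IZ)$, and by (ii) it lies in $\kw(X)$ with $\chi(\rho_u)=+1$ if $(u,u)<0$ and $\chi(\rho_u)=-1$ if $(u,u)>0$; by (iii), $\det(\rho_u)=-1$ if $(u,u)<0$ and $\det(\rho_u)=(-1)^{b_2+1}=1$ if $(u,u)>0$. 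Multiplying the two signs gives $\chi(\rho_u)\cdot\det(\rho_u)=-1$ in both cases. Hence, by Theorem \ref{giovanni}, $\rho_u\notin\Mon^2(X)$, which is the second assertion; and since the homomorphism $\chi\cdot\det$ takes the value $-1$, its kernel $\Mon^2(X)$ is an index $2$ subgroup of $\kw(X)$, which is the first assertion.

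There is essentially no obstacle here once Theorem \ref{giovanni} and Remark \ref{determinant} are available: the corollary is a direct specialization. The only point requiring a moment of attention is the parity of $b_2$ — it is exactly the oddness of $b_2=7$ in the generalized Kummer case that yields $\det(\rho_u)=+1$ for $(u,u)>0$, and thus keeps $\chi\cdot\det$ nontrivial on positive-square reflections as well; for the O'Grady examples, where $b_2$ is even, the sign of $\det(\rho_u)$ for positive square would flip and the argument would have to be reconsidered. One should also note that, although the statement is phrased for the generalized Kummer type (i.e. up to deformation), all of $\Mon^2$, $\kw$, $\chi$ and $\det$ are deformation invariant, so it is enough to verify the claims on a single generalized Kummer model, which is precisely the situation covered by Theorem \ref{giovanni}.
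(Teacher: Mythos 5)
Your proposal is correct and follows essentially the same route as the paper: both identify $\Mon^2(X)$ as the kernel of the homomorphism $\chi\cdot\det$ on $\kw(X)$ via Theorem \ref{giovanni} and then read off $\chi(\rho_u)\cdot\det(\rho_u)=-1$ in both sign cases from Remark \ref{determinant}, using that $b_2=7$ is odd. Your explicit remark that classes $u$ with $(u,u)=\pm 2$ exist (so that $\chi\cdot\det$ is surjective and the index is exactly $2$) is a small detail the paper leaves implicit, but otherwise the arguments coincide.
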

	\begin{proof}
		The first statement we have just discussed. The second statement follows from Remark \ref{determinant} (ii) and (iii). \end{proof}
\end{ueber}

\section{Polarization types of Lagrangian fibrations}\label{poltypes}
	
	The author introduced the following notion in \cite{benni1}. Let $f : X \rightarrow B$ be a Lagrangian fibration. We known that all smooth fibers are abelian varieties by Theorem \ref{mats}, even if $X$ is not projective.
For an abelian variety $F$ of dimension $\dim F = n$, there is a well known classical notion of a \emph{polarization}, cf. \cite[p. 70]{BL}, which is by definition the first Chern class $H = c_1(L)$ of an ample line bundle $L$ of $F$. Often one calls the ample line bundle $L$ a polarization. Furthermore, one can associate to such a polarization a \emph{type}, which is a tuple $$\pol(L) \ = \ (d_1,\ldots,d_n)$$ of positive integers such that $d_i$ divides $d_{i+1}$, cf. \cite[p. 70]{BL}. 

Given a smooth fiber $F$ of the Lagrangian fibration $f$ we want to consider polarization on it induced from $X$. First of all, it is not clear, how to obtain a polarization on a smooth fiber $F$ of the Lagrangian fibration $f : X \rightarrow B$ if $X$ is not projective. However, due to the following statement, which is related to an observation of C. Voisin \cite[Prop. 2.1]{camp05}, it is always possible.

\begin{prop}\label{specialpol}\cite[Prop~4.3]{benni1} For any smooth fiber $F$ there is a K\"ahler class $\omega$ on $X$ such that the restriction $\omega|_F$ is integral and primitive. \end{prop}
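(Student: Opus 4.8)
The plan is to begin with an arbitrary K\"ahler class on $X$, restrict it to $F$, and use the Lagrangian condition to show that the restricted K\"ahler cone already contains a primitive integral class, by an elementary density argument in the spirit of the observation of Voisin mentioned above.

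First I would analyse the restriction $r \coloneqq i^\star : H^2(X,\IR)\to H^2(F,\IR)$ for the inclusion $i : F\hookrightarrow X$; it is the $\IR$-linear extension of the integral map $i^\star : H^2(X,\IZ)\to H^2(F,\IZ)$. Since $F$ is a smooth Lagrangian fiber by Theorem \ref{mats}, we have $\sigma|_F = 0$, so $r$ kills $\re(\sigma)$ and $\im(\sigma)$, hence the entire summand $\big(H^{2,0}(X)\oplus H^{0,2}(X)\big)\cap H^2(X,\IR)$; therefore $r\big(H^2(X,\IR)\big) = r\big(H^{1,1}(X,\IR)\big)$. Call this real subspace $V_\IR$. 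As $r$ extends a $\IQ$-linear map, $V_\IR = V\otimes_\IQ\IR$ for the $\IQ$-subspace $V \coloneqq r(H^2(X,\IQ))$ of $H^2(F,\IQ)$. I would then pass to $W' \coloneqq H^2(F,\IZ)\cap V_\IR$: it is saturated in $H^2(F,\IZ)$ because $V_\IR$ is a linear subspace, and it spans $V_\IR$ over $\IR$ since it contains $r(H^2(X,\IZ))$, whose $\IR$-span is already $V_\IR$. Consequently a primitive vector of $W'$ is primitive in $H^2(F,\IZ)$, and the rational points of $W'$ are dense in $V_\IR$.

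Next I would observe that $r|_{H^{1,1}(X,\IR)} : H^{1,1}(X,\IR)\to V_\IR$ is a surjective linear map of finite-dimensional real vector spaces, hence an open map; since the K\"ahler cone $\kk_X\subset H^{1,1}(X,\IR)$ is a nonempty open cone, its image $r(\kk_X)$ is a nonempty open cone in $V_\IR$. By density it contains a rational point of $W'$; clearing denominators (a cone operation) produces an integral class in $W'\cap r(\kk_X)$, and dividing it by the largest positive integer of which it is a multiple (again a cone operation) produces a primitive $w_0\in W'\cap r(\kk_X)$, which is primitive in $H^2(F,\IZ)$ by saturation. Choosing any $\omega\in\kk_X$ with $r(\omega)=w_0$ then finishes the argument: $\omega$ is a K\"ahler class on $X$ and $\omega|_F = w_0$ is integral and primitive, and automatically a polarization on $F$, being the restriction of a K\"ahler form.

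The hard part is really just the identity $r(\sigma) = 0$ coming from the Lagrangian hypothesis: it is what guarantees that restricting $r$ to $H^{1,1}(X,\IR)$ does not shrink the image, so that $r(\kk_X)$ stays open in $V_\IR$; everything afterwards is elementary geometry of numbers. The one subtlety to respect is that primitivity is demanded in $H^2(F,\IZ)$ rather than merely in the image lattice $r(H^2(X,\IZ))$ (which need not be saturated), which is why the argument is run with $W' = H^2(F,\IZ)\cap V_\IR$ and why $\omega$ itself is allowed to be irrational.
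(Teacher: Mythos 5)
Your argument is correct and is essentially the proof of \cite[Prop.~4.3]{benni1} (which in turn rests on the observation of Voisin cited just before the statement): the Lagrangian condition $\sigma|_F=0$ forces the image of the restriction map to be the rational subspace $r(H^{1,1}(X,\IR))=r(H^2(X,\IR))$, so the nonempty open cone $r(\kk_X)$ must meet the saturated full-rank lattice $H^2(F,\IZ)\cap V_\IR$ in a primitive vector, and your handling of primitivity via that saturation is exactly the right refinement. The only detail worth making explicit is that $V_\IR\neq 0$ (so that a primitive vector exists), which is immediate since any K\"ahler class of $X$ restricts to a K\"ahler, hence nonzero, class on $F$.
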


Such a class $\omega$ is called \emph{special K\"ahler class} (with respect to $F$) and defines a polarization $\omega|_F$ on the abelian variety $F$ in the sense above. To this polarization one can associate its type $\pol(\omega|_F)   \coloneqq (d_1, \ldots, d_n)$ where again $d_i$ are positive integers such that $d_i$ divides $d_{i+1}$.

\begin{defin} The \emph{polarization type} of a Lagrangian fibration $f : X \rightarrow B$ is $$\pol(f) \ \coloneqq \ \pol(\omega|_F) \ = \ (d_1, \ldots, d_n) \, .$$ \end{defin}

This definition seems to be a bit ad--hoc, but it is convenient for a short introduction. The following statements were shown in \cite{benni1}.

\begin{theo}\label{mainpolar} \cite[Section 4]{benni1} Let $f : X \rightarrow B$ be a Lagrangian fibration with $\dim X = 2n$. Then the following statements hold. \begin{enumerate}
		\item \cite[Prop.~4.7]{benni1} The polarization type $\pol(f)$ is well defined \ie does not depend on the chosen smooth fiber and the chosen special Kähler class (with respect to this fiber) and is a primitive vector in $\IZ^n$. 
		\item \cite[Thm.~4.9]{benni1} The polarization type is a deformation invariant of the fibration \ie if $f' \colon X' \rightarrow B'$ is a Lagrangian fibration deformation equivalent to $f$, then $\pol(f) = \pol(f')$.
		\item \cite[Prop.~4.6, Prop.~4.10]{benni1} Let $B^\circ$ denote the subset of $B$ which para\-metrizes the smooth fibers. Then there exists a \emph{family of special Kähler classes}, that is a map $\alpha : B^\circ \rightarrow \kh$ where $\kh \subset (R^2 \pi_{\star} \IZ \otimes \ko_B)|_{B^\circ}$ is a subbundle and $\alpha(t)$ is a special Kähler class with respect to the smooth fiber $X_t$ for every $t \in B^\circ$. In particular $\pol(\alpha(t)) = \pol(f)$ for every $t \in B^\circ$. 
		\item \cite[Prop.~4.6]{benni1} The family of special Kähler classes $\alpha$ induces a holomorphic map, called \emph{moduli map},
		\begin{align*}
		\phi \ : \  B^{\circ} & \longrightarrow \ka_{\pol(f)} \, , \\
		t & \longmapsto (X_t, \alpha(t)) \notag \end{align*}
		where $\ka_{\pol(f)}$ denotes the moduli space of $\pol(f)$ polarized abelian varieties. 
		\item \cite[Thm.~6.1]{benni1} Let $f : X \rightarrow \IP^n$ be a Lagrangian fibration of $\kdrei$--type. Then $\pol(f) = (1, \ldots, 1)$.
	\end{enumerate}
\end{theo}

In this paper, we want to determine the polarization type of a Lagrangian fibration of generalized Kummer type.

\section{An orbit of primitive isometric embeddings}\label{anorbit}

The main ingredient for the construction of a monodromy invariant for isotropic classes in the second cohomology of a generalized Kummer manifold is a monodromy invariant orbit of primitive isometric embeddings of the Kummer--type lattice into the Mukai lattice. \\

The group of isometries $\OO(\mukai)$ of the \emph{Mukai lattice} $\mukai := \Lambda \oplus \UU$ (see also below) and $\OO(\Lambda)$ acts on the set $\OO(\Lambda,\mukai)$ of primitive isometric embeddings $\iota : \Lambda \hookrightarrow \mukai$ of the lattice $\Lambda$ into $\mukai$ by composition i.e. for $g \in \OO(\Lambda)$ and $\tilde{g} \in \OO(\mukai)$ one sets $g \cdot \iota := \iota \circ g$ and $\tilde{g} \cdot \iota := \tilde{g} \circ \iota$.

\begin{defin} Let $\iota \in \OO(\Lambda,\mukai)$ be a primitive isometric embedding. An element $g \in \OO(\Lambda)$ \emph{leaves the $\OO(\mukai)$--orbit $[\iota] = \OO(\mukai) \iota$ invariant} if $g \cdot [\iota] := [\iota \circ g] = [\iota]$ i.e. if there exists $\tilde{g} \in \OO(\mukai)$ such that $\tilde{g} \circ \iota = \iota \circ g$. The orbit is called \emph{monodromy invariant} if  $\Mon^2(X) \cdot [\iota] = [\iota]$ i.e. all elements in $\Mon^2(X)$ leave the orbit $[\iota]$ invariant. \end{defin}

\begin{rem}\label{vtov}  Let $\iota : \Lambda \hookrightarrow \mukai$ denote a primitive isometric embedding. If $X$ is a generalized Kummer type manifold then $\iota(\Lambda)^{\bot} = \left\langle v\right\rangle$ is of rank $1$ since the Mukai lattice is of rank $8$ and the Kummer type lattice is of rank $7$. An isometry $\tilde{g} \in \OO(\mukai)$ with $\iota \circ g = \tilde{g} \circ \iota$ necessarily satisfies $\tilde{g}(\iota(\Lambda)) = \iota(\Lambda)$ and $\tilde{g}(v) = \pm v$, otherwise $\tilde{g}$ cannot be an isometry. 
\end{rem}

The following Lemma is a special case of \cite[Cor. 1.5.2]{nikulin}.

\begin{lem}\label{extended} Let $\Lambda$ be the generalized Kummer or $\kdrei$ lattice. Write $\Lambda = w^{\bot} \subset \mukai$ with $w$ primitive (cf. \emph{Remark \ref{vtov}}). An isometry $g \in \OO(\Lambda)$ can be extended to an isometry $\tilde{g} \in \OO(\mukai)$ if and only if $g$ acts as $\pm 1$ on the discriminant $\Lambda^{\vee}/\Lambda$.\end{lem}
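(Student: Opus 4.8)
The statement is an instance of Nikulin's theory of discriminant forms, so the strategy is to reduce the extension problem to a statement about the discriminant form $q_\Lambda$ on $A_\Lambda := \Lambda^\vee/\Lambda$. Write $\Mukai = \Lambda \oplus \langle w \rangle$ as an overlattice; since $\Mukai$ is unimodular, the embedding $\Lambda \oplus \langle w\rangle \hookrightarrow \Mukai$ identifies the graph of an anti-isometry $\gamma : A_\Lambda \xrightarrow{\sim} A_{\langle w\rangle}$ (both discriminant groups are cyclic of order $N := |(w,w)|= 2n+2$, carrying opposite discriminant forms). An isometry $g \in \OO(\Lambda)$ extends to $\tilde g \in \OO(\Mukai)$ fixing $\langle w\rangle$-component setwise exactly when there is $h \in \OO(\langle w\rangle) = \{\pm 1\}$ with $\gamma \circ \bar g = \bar h \circ \gamma$ on discriminant groups (Nikulin, \cite[Cor.~1.5.2]{nikulin}), where $\bar g$ denotes the induced action on $A_\Lambda$. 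Since $\gamma$ is an isomorphism and $\bar h = \pm\id$, this condition is simply $\bar g = \pm \id$ on $A_\Lambda$.

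**Key steps, in order.** First I would record that $\Lambda = w^\perp$ inside the unimodular lattice $\Mukai$ forces $A_{w^\perp} \cong A_{\langle w\rangle}$ with $q_{w^\perp} = -q_{\langle w\rangle}$, via the standard exact sequence $0 \to \Lambda \oplus \langle w\rangle \to \Mukai \to A_{\langle w\rangle} \to 0$ and the glue map $\gamma$. Second, I would invoke the relevant special case of \cite[Cor.~1.5.2]{nikulin}: an isometry of $\Lambda$ extends to $\Mukai$ acting on $\langle w\rangle$ by some $h\in\OO(\langle w\rangle)$ iff $\bar g$ and $\bar h$ are intertwined by the glue isomorphism $\gamma$. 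Third, observe $\OO(\langle w\rangle) = \{\pm 1\}$ since $\langle w\rangle$ has rank one, so $\bar h \in \{\id, -\id\}$; transporting through the isomorphism $\gamma$ gives that extendability is equivalent to $\bar g \in \{\pm\id_{A_\Lambda}\}$, which is exactly the hypothesis that $g$ acts as $\pm1$ on $\Lambda^\vee/\Lambda$. For the converse direction, given $\bar g = \pm\id$, the pair $(g, \pm\id_{\langle w\rangle})$ is compatible with $\gamma$ and hence glues to a genuine isometry $\tilde g \in \OO(\Mukai)$ restricting to $g$ on $\Lambda$; for the forward direction, any extension $\tilde g$ must preserve $w^\perp = \Lambda$ and hence send $w$ to $\pm w$ (Remark~\ref{vtov}), so its action on $A_{\langle w\rangle} \cong A_\Lambda$ is $\pm\id$, and compatibility with $\gamma$ forces $\bar g = \pm\id$.

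**Main obstacle.** The only real subtlety is bookkeeping with signs and with the direction of the glue anti-isometry: one must check that "$\bar g$ acts as $\pm1$ on $A_\Lambda$" and "$\bar h = \pm 1$ on $A_{\langle w\rangle}$" are genuinely matched up by $\gamma$ — i.e. that $\gamma \circ (\pm\id) = (\pm\id)\circ\gamma$ automatically, which is immediate since $\pm\id$ is central, so in fact \emph{any} choice of sign on the $\langle w\rangle$ side works once $\bar g=\pm\id$. A minor point worth stating is that we only need $g$ to act as $\pm1$ (not necessarily $+1$): the nontrivial element $-\id$ of $\OO(\langle w\rangle)$ is what allows $\bar g = -\id$ to be accommodated, and this is precisely why the group $\kw(\Lambda)$ of Definition~\ref{wx} (isometries acting as $\pm1$ on the discriminant), rather than the stable orthogonal group, is the relevant one. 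With these remarks the proof is essentially a citation of \cite[Cor.~1.5.2]{nikulin} specialized to a rank-one orthogonal complement.
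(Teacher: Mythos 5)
Your proof is correct and follows essentially the same route as the paper: both reduce the extension problem to Nikulin's \cite[Cor.~1.5.2]{nikulin}, observe that $\Lambda^{\bot} = \langle w \rangle$ has only $\pm 1$ as isometries, and translate the gluing condition on discriminant forms into the condition that $g$ acts as $\pm \id$ on $\Lambda^{\vee}/\Lambda$. The paper's proof is terser — it leaves the glue-map bookkeeping implicit by referring to Nikulin's exposition — whereas you spell it out, but there is no substantive difference in the argument.
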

\begin{proof} By \cite[Cor.~1.5.2]{nikulin} we can extend $g$ to such a $\tilde{g}$ if and only if we have an isometry $\varphi : \Lambda^{\bot} \rightarrow \Lambda^{\bot}$ with an additional property. Since $\Lambda^{\bot} = \langle w \rangle$ the only two isometries are $\varphi = \pm 1$. Following the exposition in \cite[5.~ff.]{nikulin}, the additional property for $\varphi = \pm 1$ means that $g$ acts on $\Lambda^\vee / \Lambda$ as $\pm 1$.  \end{proof}

\begin{cor}\label{stabil} Let $\Lambda = w^{\bot} \subset \mukai$ be as in the Lemma above and let us denote  $[\iota] = \OO(\mukai) \iota$ an arbitrary invariant $\OO(\mukai)$--orbit of primitive isometric embeddings $\Lambda \hookrightarrow \mukai$. Then the sub group $\kw(\Lambda) \subset \OO^+(\Lambda)$ defined in \emph{Definition \ref{wx}} is equal to the  sub group of all $g \in \OO^+(\Lambda)$ leaving the orbit $[\iota] = \OO(\mukai) \iota$  invariant, i.e. there exists $\tilde{g}$ such that $\iota \circ g = \tilde{g} \circ \iota$. \end{cor}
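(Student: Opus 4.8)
The plan is to read this off Lemma~\ref{extended} once one unwinds the definition of ``leaving the orbit invariant''. By definition, a $g\in\OO(\Lambda)$ leaves $[\iota]=\OO(\mukai)\iota$ invariant precisely when there is a $\tilde g\in\OO(\mukai)$ with $\iota\circ g=\tilde g\circ\iota$, i.e.\ when $g$ extends, along the primitive isometric embedding $\iota$, to an isometry of $\mukai$. I would first note that this condition depends only on the orbit $[\iota]$ and not on the chosen representative: replacing $\iota$ by $\tilde h\circ\iota$ with $\tilde h\in\OO(\mukai)$ changes neither $[\iota]$ nor $[\iota\circ g]$. So it suffices to analyse one representative.

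Next I would put Lemma~\ref{extended} in a position to apply. By Remark~\ref{vtov} the orthogonal complement $\iota(\Lambda)^{\bot}$ has rank one; its generator $w$ is then a primitive vector of $\mukai$, and since $\iota(\Lambda)$ is a primitive sublattice of $\mukai$ of the same rank as $w^{\bot}$ and is contained in $w^{\bot}$, we get $\iota(\Lambda)=w^{\bot}$. Thus $\iota$ realises $\Lambda$ exactly as $w^{\bot}\subset\mukai$ with $w$ primitive, which is the setting of Lemma~\ref{extended}. That lemma (a special case of \cite[Cor.~1.5.2]{nikulin}) then tells us that $g$ extends along $\iota$ to an isometry of $\mukai$ if and only if $g$ acts as $\pm 1$ on the discriminant $\Lambda^{\vee}/\Lambda$.

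Combining these two observations: for $g\in\OO^+(\Lambda)$, the property ``$g$ leaves $[\iota]$ invariant'' is equivalent to ``$g$ acts as $\pm 1$ on $\Lambda^{\vee}/\Lambda$'', and by Definition~\ref{wx} the latter condition cuts out precisely $\kw(\Lambda)$ inside $\OO^+(\Lambda)$. This yields the asserted equality of subgroups. Beyond invoking Lemma~\ref{extended} there is essentially nothing to overcome; the one point I would state explicitly is that the whole argument is uniform in the orbit $[\iota]$ — it never uses which orbit we started with — which is exactly why the answer $\kw(\Lambda)$ is independent of the chosen $\OO(\mukai)$--orbit. This uniformity in turn rests, via Remark~\ref{vtov}, on the fact that $\iota(\Lambda)^{\bot}$ is of rank one for every such $\iota$, so that the only candidate isometry of the complement in Nikulin's criterion is forced to be $\pm 1$.
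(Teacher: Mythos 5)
Your proposal is correct and follows the same route as the paper, which proves the corollary in one line by translating ``$g$ leaves $[\iota]$ invariant'' into the extension condition of Lemma~\ref{extended} and reading off the $\pm 1$ action on the discriminant. The extra details you supply --- independence of the representative of the orbit, and the identification $\iota(\Lambda)=w^{\bot}$ via saturation so that Lemma~\ref{extended} literally applies --- are exactly the points the paper leaves implicit.
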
 
\begin{proof}An element $g \in \OO^+(\Lambda)$ leaves $\OO(\mukai) \iota$ invariant if and only if it acts by $\pm 1$ on the discriminant $\Lambda^{\vee}/\Lambda$ by Lemma \ref{extended}. 
\end{proof}
In other words, $\kw(\Lambda) = \Stab([\iota])$ is equal to the stabilizer of $[\iota]$ with respect to the action of $\OO^+(\Lambda)$ on the set of $\OO(\mukai)$--orbits of primitive isometric embeddings $\OO(\Lambda, \mukai)$. 


With the knowledge of the monodromy group of a generalized Kummer manifold, see Theorem \ref{giovanni}, one can construct an analogue of the monodromy invariant $\OO(\mukai)$--orbit as in \cite[Thm.~1.10]{eyal3}. \\

Let $S$ be an abelian surface and let $H^{\bullet}(S)$ denote the even cohomology i.e. 
$$ H^{\bullet}(S) \ := \ H^0(S,\IZ) \oplus H^2(S, \IZ) \oplus H^4(S,\IZ)$$
together with the bilinear form defined by $(v,w) := (v_2,w_2) - \int_S (v_0 \wedge w_4 + v_4 \wedge w_o)$ where $(v_2, w_2) = \int_S v_2 \wedge w_2$ denotes the intersection form on $H^2(S,\IZ)$ and $v = v_0 + v_2 + v_4$ with $v_{i} \in H^i(S,\IZ)$ the decomposition in $H^{\bullet}(S)$ and similarly for $w$. This lattice is even, unimodular, of rank $8$ and isometric to the \emph{Mukai lattice}
\begin{equation}\label{mukailattice} \tilde{\Lambda} \ := \ \UU^{\oplus 4} \end{equation}
where $\UU$ is the unimodular rank two hyperbolic lattice. We identify $H^4(S,\IZ) = \IZ$ where we use the Poincare dual to a point as a generator and similarly $H^0(S,\IZ) = \IZ$ by taking the Poincare dual of $S$. 

\begin{defin} A \emph{Mukai vector} is a triple $v = (r,c,s)$ in $H^0(S,\IZ) \oplus H^{1,1}(S,\IZ) \oplus H^4(S,\IZ)$. It is called \emph{positive} if one of the following cases are satisfied \begin{enumerate} \item $r > 0$
		\item $r = 0$, $c$ is effective and $s \neq 0$
		\item $r = c = 0$ and $s < 0$
	\end{enumerate} \end{defin}
	Let $v$ be a primitive Mukai vector on $S$. An ample divisor $H$ on $S$ is called $v$--generic if every $H$--semistable sheaf is $H$--stable. For a coherent sheaf $F \in \coh(S)$ set $v(F) := \ch(F) \,$\footnote{Note that $v(F) = (\rk(F), c_1(F), c_1^2(F)/2 - c_2(F))$} which is a Mukai vector as easily verified. Choose a positive and primitive Mukai vector $v = (r,c,s)$ with $c \in \NS(S)$ and $(v,v) \geq 6$ together with a $v$--generic ample class $H$. 
	General results of S. Mukai \cite{mukais} imply that the moduli space $M_H(v)$ of $H$--stable sheaves $F$ with Mukai vector $v(F) = v$ is a projective holomorphic symplectic manifold but not irreducible. By \cite[Thm. 0.1]{yoshi} the Albanese torus of $M_H(v)$ is $S \times S^\vee$. Consider the Albanese map 
	$$ \alb_v : M_H(v) \longrightarrow S \times S^{\vee} $$
	and set $K_H(v) := \alb_v^{-1}(0,0)$. Then we have $\dim K_H(v) = (v,v) - 2 =: 2n$ and by K. Yoshioka \cite[Thm 0.2]{yoshi} this is an irreducible holomorphic symplectic manifold of Kummer type.
	
	 We have Mukai's homomorphism of Hodge structures
	 $$\Theta_v \ : \ v^{\bot} \longrightarrow H^2(M_H(v), \IZ) $$
	 which can be defined as follows. Choose a quasi--universal family of sheaves $\ke$ on $S$ of simplitude $\rho \in \IN$, cf. \cite[Thm.~A.5]{mukaibundle}. That is a family of sheaves $\ke \in \coh(S \times M_H(v))$ on $S$ parametrized by $M_H(v)$ (in particular, $\ke$ is flat over $M_H(v)$) and for every class $F \in M_H(v)$ one has $\ke_{[F]} = \ke|_{S \times \left\{F\right\}} \cong F^{\oplus \rho}$. Then set 
	\begin{equation}\label{mukaihom} \Theta_v(x) \ := \ \frac{1}{\rho} \left[(\pr_{M_H(v)})_{!}\left(( \ch(\ke) (\pr_{S})^{\star}(\sqrt{\td(S)} x^{\vee})\right) \right]_2 \end{equation}
	 where $x^{\vee} = -x_0 + x_2 + x_4$ for $x = x_0 + x_2 + x_4$ and $[ \cdot ]_2$ denotes the part in $H^2(S,\IZ)$. Note that $\sqrt{\td(S)} = 1$ for an abelian surface $S$. For the details see \cite[1.2]{yoshi}, \cite{ogradyweight}, \cite{mukaibundle} and \cite{mukais}. 
	
	By composing with the restriction map $r: H^2(M_H(v),\IZ) \rightarrow H^2(K_H(v),\IZ)$ we obtain a morphism
\begin{equation}\label{mukaihom2} \Theta_v \ : \ v^{\bot} \longrightarrow H^2(M_H(v),\IZ) \longrightarrow H^2(K_H(v),\IZ) \end{equation}
	which is an isometry of Hodge structures by \cite[Thm.~0.2]{yoshi} and which we also denote by $\Theta_v$ by abuse of notation.

	\begin{theo}\label{orbit} Let $X$ be a manifold of generalized Kummer type of dimension $2n \geq 4$. Then there exists a canonical monodromy invariant $\OO(\mukai)$--orbit $\iota_X$ of primitive isometric embeddings $\Lambda = H^2(X,\IZ) \hookrightarrow \mukai$ into the Mukai lattice. \end{theo}
	\begin{proof} Let $K_H(v)$ denote the manifold of generalized Kummer type described above such that $\dim X = \dim K_H(v)$. Fix an isometry $\varphi : H^{\bullet}(S) \rightarrow \mukai$ and let $P : H^2(X,\IZ) \rightarrow H^2(K_H(v),\IZ)$ be a parallel transport operator. Denote by $\iota$ the primitive isometric embedding
		$$H^2(X,\IZ) \stackrel{P}{\longrightarrow} H^2(K_H(v),\IZ) \stackrel{\Theta_v^{-1}}{\longrightarrow} v^{\bot} \stackrel{\varphi}{\longrightarrow} \mukai \, .$$
		Set $\iota_X := \OO(\mukai) \iota$. Let $g \in \Mon^2(X)$ denote a monodromy operator. By Theorem \ref{giovanni} $g$ acts on $H^2(X,\IZ)^{\vee}/H^2(X,\IZ)$ as $\pm \id$. By Lemma \ref{extended} $g$ can be extended to an isometry $\tilde{g}$ of $\mukai$ such that $\iota \circ g = \tilde{g} \circ \iota$, i.e. the orbit $\iota_X$ is monodromy invariant. 
		
			The orbit $\iota_X$ is \emph{canonical} in the following sense. We have made a choice of moduli spaces $K_H(v) \subset M_H(v)$ of sheaves on an abelian surface $S$ and therefore of Mukai's homomorphism $\Theta_v : v^\bot \rightarrow H^2(M_H(v),\IZ) \rightarrow H^2(K_H(v),\IZ)$. It might be, that a different choice of moduli spaces and therefore of a different Mukai homomorphism could lead to another orbit of primitive isometric embeddings. With \emph{canonical} we mean that we always end up with the same orbit. 
			
			This follows from K.~Yoshioka's method of proof of the main results in \cite[4.3., Prop.~4.12., Proof~of~Thm.~0.1 and 0.2]{yoshi}. 
			If we choose another irreducible holomorphic symplectic moduli space of dimension $\dim X$, then it is deformation equivalent to $K_H(v)$ and Yoshioka's proof for this statement uses deformations of moduli spaces of sheaves over families of surfaces \cite[Lem.~2.3]{yoshi}, and Fourier--Mukai transforms for which the Mukai homomorphism varies continuously, see \cite[2.2., Proof of Prop.~2.4.]{yoshi}. Therefore the $\OO(\mukai)$--orbit does not change.
		\end{proof}
	
\section{Monodromy Invariants}\label{moninviso}

We start with basic facts about general monodromy invariants, as described in \cite[5.3.]{eyalprime}. In the next subsection the monodromy invariant for isotropic classes for the generalized Kummer case is constructed.

Let $X$ be an irreducible holomorphic symplectic manifold. Let $I(X) \subset H^2(X,\IZ)$ denote a monodromy invariant subset, i.e. $\Mon^2(X) \cdot I(X) \subset I(X)$ and $\Sigma$ a set.

\begin{defin}\cite[Def. 5.16]{eyalprime} A \emph{monodromy invariant} of the pair $(X,e)$, $e \in I(X)$, is a $\Mon^2(X)$--invariant map $\vartheta : I(X) \rightarrow \Sigma$ i.e. $\vartheta(ge) = \vartheta(e)$ for all $e \in I(X)$ and all $g \in \Mon^2(X)$. Further $\vartheta$ is called \emph{faithful} if the induced map $\bar{\vartheta } : I(X)/\Mon^2(X) \rightarrow \Sigma$ is injective. \end{defin}

\begin{ueber}\textbf{Induced monodromy invariant subset.} Let $X'$ denote another irreducible holomorphic symplectic manifold deformation equivalent to $X$. Let $P : H^2(X,\IZ) \rightarrow H^2(X',\IZ)$ denote a parallel transport operator. Then we can define
	$$I(X') \ := \ P(I(X))$$
	to obtain a $\Mon^2(X')$ invariant subset $I(X')$ of $H^2(X',\IZ)$ induced by $I(X)$. Indeed, this is well defined: if one has another parallel transport operator $P' : H^2(X,\IZ) \rightarrow H^2(X',\IZ)$, then $P'^{-1} \circ P$ is in $\Mon^2(X)$ hence $(P'^{-1} \circ P)(I(X)) = I(X)$ as $I(X)$ is $\Mon^2(X)$ invariant. Hence $P(I(X)) = P'(I(X))$. 
	
	Alternatively, we could define 
	$$I(X') \ = \ \left\{ e' \in H^2(X', \IZ) \ | \ \text{there exists } e \in I(X) \ \text{such that } (X,e) \sim_{\text{def}} (X',e') \right\} \, .$$
	where the deformation equivalence of the pairs $(X,e)$ and $(X,e')$ is meant in the sense of Definition \ref{pairdef} as usual. 
\end{ueber}

\begin{ueber}\textbf{Induced monodromy invariant.} Let $X'$ be as above. If we have a monodromy invariant $\vartheta : I(X) \rightarrow \Sigma$ then we can obtain an induced monodromy invariant on $X'$ which we also denote by $\vartheta : I(X') \rightarrow \Sigma$ by abuse of notation. If $e' \in I(X')$ then there is a pair $(X,e)$ deformation equivalent to $(X',e')$ and we can define the induced monodromy invariant by 
	$$\vartheta (e') \ := \ \vartheta(e) \, .$$
	Note that this is well defined as $\vartheta $ is $\Mon^2(X)$--invariant. \end{ueber}

The following is a very important statement for the computation of polarization types of Lagrangian fibrations and is based on the Global Torelli Theorem, see \cite[5.2 ff.]{eyalprime}.

\begin{prop}{\cite[Lem.~5.17]{eyalprime}}\label{computeinvariant} Let $\vartheta  : I(X) \rightarrow \Sigma$ be a faithful monodromy invariant and let $(X_i,e_i)$, $i = 1,2$, denote two pairs with $X_i$ deformation equivalent to $X$ and $e_i \in I(X_i)$. \begin{enumerate}
		\item $\vartheta (e_1) = \vartheta (e_2)$ if and only if $(X_1,e_1)$ and $(X_2,e_2)$ are deformation equivalent.
		\item If $\vartheta (e_1) = \vartheta (e_2)$ and $e_i = c_1(L_i)$ for holomorphic line bundles $L_i$ on $X_i$ and there exist Kähler classes $\omega_i$ on $X_i$ such that $(\omega_i, e_i) > 0$, then $(X_1,L_1)$ is deformation equivalent to $(X_2,L_2)$. \end{enumerate}
\end{prop}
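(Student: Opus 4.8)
The plan is to obtain both statements from the Global Torelli package for marked pairs, using faithfulness of $\vartheta$ together with the calculus of parallel transport operators recalled above. The one genuinely geometric input needed for the formal part is that a parallel transport operator $P\colon H^2(X_1,\IZ)\to H^2(X_2,\IZ)$ with $P(e_1)=e_2$ already exhibits $(X_1,e_1)$ and $(X_2,e_2)$ as deformation equivalent: I would restrict the family $\pi\colon\kX\to S$ and the path $\gamma$ realizing $P$ to a neighbourhood of $\gamma([0,1])$ in $S$ that deformation retracts onto $\gamma([0,1])$, so that $R^2\pi_\star\IZ$ is trivial there and the constant section with value $e_1$ at the start of $\gamma$ has value $P(e_1)=e_2$ at its end, meeting Definition \ref{pairdef}(i). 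Granting this, part (i) is quick. For ``$\Leftarrow$'' one uses that deformation equivalence of pairs is transitive (concatenate the two defining families at their common point), so $(X_1,e_1)$ is a legitimate representative for evaluating the induced invariant $\vartheta(e_2)$, whence $\vartheta(e_1)=\vartheta(e_2)$. For ``$\Rightarrow$'' I would pick any parallel transport operator $P$ between $H^2(X_1,\IZ)$ and $H^2(X_2,\IZ)$, which exists since $X_1$ and $X_2$ are deformation equivalent; then $P(e_1)\in I(X_2)$ and $\vartheta(P(e_1))=\vartheta(e_1)=\vartheta(e_2)$, so faithfulness provides $g\in\Mon^2(X_2)$ with $g(P(e_1))=e_2$, and $g\circ P$ is a parallel transport operator taking $e_1$ to $e_2$; the technical input then gives $(X_1,e_1)\sim_{\text{def}}(X_2,e_2)$.

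For part (ii), part (i) furnishes a family $\pi\colon\kX\to S$ over a connected base with a section $e$ of $R^2\pi_\star\IZ$, points $t_i$ with $\kX_{t_i}=X_i$ and $e_{t_i}=e_i=c_1(L_i)$. By the reformulation of Definition \ref{pairdef}(ii) recorded in the Remark following it, it is enough to produce such a family in which $e$ is of Hodge type $(1,1)$ at \emph{every} point of the base; so far this is known only at $t_1,t_2$. I would fix a marking $\eta_1$ on $X_1$, let $\gM^\circ_\Lambda$ be the connected component of the moduli of marked pairs containing $(X_1,\eta_1)$, and set $\lambda\coloneqq\eta_1(e_1)$; transporting $\eta_1$ along the family gives a marking $\eta_2$ on $X_2$ with $(X_2,\eta_2)\in\gM^\circ_\Lambda$ and $\eta_2(e_2)=\lambda$. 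Writing $p_i\coloneqq\kp(X_i,\eta_i)$, the hypothesis that some K\"ahler class $\omega_i$ satisfies $(\omega_i,e_i)>0$ places $e_i$ on the boundary of the positive cone component $\kc_{X_i}$ containing the K\"ahler cone, hence $\lambda=\eta_i(e_i)$ on the boundary of the component $\kc_{p_i}$ compatible with $\gM^\circ_\Lambda$ (see \ref{geomsit}); since $\lambda$ is isotropic, \refb{hyperisocon} then forces $p_1$ and $p_2$ into the \emph{same} connected component $\Omega^+_{\lambda^\bot}$ of $\Omega_{\lambda^\bot}$. This is the one place where the K\"ahler hypothesis is essential, as otherwise the two periods could lie in the two distinct components of $\Omega_{\lambda^\bot}$, i.e. $L_1$ and $L_2$ could be ``oppositely oriented''.

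It then remains to connect $(X_1,\eta_1)$ and $(X_2,\eta_2)$ inside $\kp^{-1}(\Omega_{\lambda^\bot})\subset\gM^\circ_\Lambda$, the locus where $\eta^{-1}(\lambda)$ is of type $(1,1)$. Since a representative of $\Def(X,L)$ is cut out by $(c_1(L),\cdot)=0$ in $\Omega_\Lambda$, on this locus $\kp$ restricts to a local homeomorphism onto $\Omega_{\lambda^\bot}$, which by Huybrechts' surjectivity \cite[Prop.~25.12]{huy1} covers the connected set $\Omega^+_{\lambda^\bot}$; I would use this to build a path from $(X_1,\eta_1)$ to $(X_2,\eta_2)$ inside $\kp^{-1}(\Omega^+_{\lambda^\bot})$, after possibly replacing $(X_2,\eta_2)$ by a point of $\gM_\Lambda$ inseparable from it (the period-fibre over $\gM^\circ_\Lambda$ consisting of pairwise inseparable points), which alters neither the deformation class of $(X_2,L_2)$ nor the hypotheses. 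Along such a path $\eta^{-1}(\lambda)$ is everywhere of type $(1,1)$, hence defines a line bundle, since $\Pic\cong H^{1,1}\cap H^2(\IZ)$ for an irreducible holomorphic symplectic manifold; gluing the local universal pairs on the spaces $\Def(X,L)$ over a finite chain of charts covering the path then yields a family over a connected complex space with an everywhere-$(1,1)$ section of $R^2\pi_\star\IZ$ restricting to $c_1(L_i)$ at the endpoints, so $(X_1,L_1)\sim_{\text{def}}(X_2,L_2)$ by the quoted reformulation.

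The step I expect to be the main obstacle is the connectedness argument in part (ii): upgrading ``$\Omega^+_{\lambda^\bot}$ is connected'' to an actual path of \emph{marked pairs} requires handling the non-Hausdorff points of $\gM_\Lambda$ --- inseparable marked pairs with a common period --- and it is exactly there that the Global Torelli theorem and the K\"ahler-positivity hypothesis do the real work. The remainder is bookkeeping with parallel transport operators and with the local description of $\Def(X,L)$ inside the period domain.
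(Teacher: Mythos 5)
The paper itself gives no proof of this Proposition: it is quoted verbatim from \cite[Lem.~5.17]{eyalprime}, and the only internal hint at its proof is the later appeal to \cite[Cor.~5.11]{eyalprime} for the connectedness of $\gM^{\circ}_{\lambda^\bot}$. Your reconstruction follows exactly Markman's route: for (i), combine an arbitrary parallel transport operator with faithfulness and the observation that a parallel transport operator matching the classes already realizes deformation equivalence of the pairs; for (ii), pass to a connected component of the moduli of marked pairs, use the K\"ahler hypothesis to force both periods into the same distinguished component $\Omega^+_{\lambda^\bot}$, and glue Kuranishi families $\Def(X,L)$ along a path. Part (i) is correct as written.

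Two points in part (ii) need attention. First, your argument is written only for isotropic $e_i$: the space $\Omega^+_{\lambda^\bot}$ of \ref{foriso} is defined only for isotropic $\lambda$, and the implication ``$(\omega,e)>0$ for a K\"ahler class $\omega$ implies $e\in\partial\kc_X$'' uses $(e,e)=0$. The Proposition is stated for an arbitrary monodromy invariant subset $I(X)$; since the paper only ever applies it to isotropic classes this restriction is harmless here, but it should be stated. Second, and more substantively, the connectedness step is not closed: a surjective local homeomorphism from $\kp^{-1}(\Omega^+_{\lambda^\bot})\cap\gM^{\circ}_{\Lambda}$ onto the connected set $\Omega^+_{\lambda^\bot}$ does \emph{not} imply the source is connected --- a disjoint union of two sheets has the same properties --- so no path can be ``built'' from these data alone. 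The missing link, which is the actual content of \cite[Cor.~5.11]{eyalprime}, is: each fiber of $\kp$ over $\gM^{\circ}_{\Lambda}$ consists of pairwise inseparable points (Global Torelli); inseparable points of a locally connected space always lie in the same connected component; hence distinct connected components of $\kp^{-1}(\Omega^+_{\lambda^\bot})\cap\gM^{\circ}_{\Lambda}$ have disjoint open images in $\Omega^+_{\lambda^\bot}$, and connectedness of the target forces there to be only one. Note that this also makes your proposed ``replacement of $(X_2,\eta_2)$ by an inseparable point'' unnecessary --- and it had better be, since a path ending at a marked pair merely inseparable from $(X_2,\eta_2)$ would only prove deformation equivalence of $(X_1,L_1)$ with a bimeromorphic model of $X_2$, not with $(X_2,L_2)$ itself. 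With that step supplied, your argument is complete in the isotropic case.
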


For effective isotropic classes, the requirements of the second statement of the Proposition above is always satisfied due to the following Lemma.

\begin{lem}\label{isopos} \cite[Lem.~6.7]{benni1} Let $\lambda$ be a nontrivial isotropic class in the closure $\bar{\kc}_X$ of the positive cone in $H^{1,1}(X,\IR)$ with $X$ an arbitrary irreducible holomorphic symplectic manifold. Then the Beauville--Bogomolov quadratic form satisfies $(x, \lambda) > 0$ for every class $x$ in the positive cone $\kc_X$.\end{lem}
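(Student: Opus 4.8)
The plan is to treat this as a statement about the Lorentzian vector space $V \coloneqq H^{1,1}(X,\IR)$ carrying the Beauville--Bogomolov bilinear form, which has signature $(1, b_2(X)-3)$; nothing about the geometry of $X$ beyond this signature will be used. The strategy is to fix a reference class in the positive cone, decompose orthogonally with respect to it, and close the argument with a Cauchy--Schwarz estimate.

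First I would choose a class $e \in \kc_X$ and rescale so that $(e,e) = 1$. Since the form is positive on $\IR e$ and of signature $(1,\dim V - 1)$, it is negative definite on $e^\bot$; writing $v = (v,e)\, e + v^\bot$ with $v^\bot \in e^\bot$ and letting $\lvert\cdot\rvert$ be the Euclidean norm on $e^\bot$ induced by $-(\cdot,\cdot)$, one gets $(v,v) = (v,e)^2 - \lvert v^\bot\rvert^2$. This identifies the two connected components of $\{v \in V : (v,v)>0\}$ as $\{(v,e) > \lvert v^\bot\rvert\}$ and $\{(v,e) < -\lvert v^\bot\rvert\}$, and since $e$ lies in the first, I would record $\kc_X = \{v \in V : (v,e) > \lvert v^\bot\rvert\}$ and $\bar{\kc}_X = \{v \in V : (v,e) \ge \lvert v^\bot\rvert\}$.

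Next, for a nontrivial isotropic $\lambda \in \bar{\kc}_X$, the relation $0 = (\lambda,\lambda) = (\lambda,e)^2 - \lvert\lambda^\bot\rvert^2$ together with $(\lambda,e) \ge \lvert\lambda^\bot\rvert \ge 0$ forces $(\lambda,e) = \lvert\lambda^\bot\rvert$, and this common value is strictly positive because $\lambda \ne 0$ (otherwise $(\lambda,e)=0$ and $\lambda^\bot=0$, so $\lambda=0$). Then for any $x \in \kc_X$ I would expand $(x,\lambda) = (x,e)(\lambda,e) + (x^\bot,\lambda^\bot)$, bound the last term by $\lvert(x^\bot,\lambda^\bot)\rvert \le \lvert x^\bot\rvert\,\lvert\lambda^\bot\rvert$ via Cauchy--Schwarz on the negative definite space $e^\bot$, and conclude
\[
(x,\lambda) \ \ge \ (x,e)(\lambda,e) - \lvert x^\bot\rvert\,\lvert\lambda^\bot\rvert \ = \ \bigl((x,e) - \lvert x^\bot\rvert\bigr)\,(\lambda,e) \ > \ 0 ,
\]
using $(x,e) > \lvert x^\bot\rvert$ and $(\lambda,e) > 0$.

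I do not expect any serious obstacle: the whole argument is elementary linear algebra in Lorentzian signature. The one point that needs care is the bookkeeping of signs — it is precisely the hypothesis that $\lambda$ lies in the closure of the \emph{positive} cone $\kc_X$, and not in its opposite, that pins down $(\lambda,e) = +\lvert\lambda^\bot\rvert$ and hence the strict positivity that drives the final inequality. An alternative, coordinate-free phrasing would invoke the standard fact that the Beauville--Bogomolov form is nonnegative on pairs of elements of $\bar{\kc}_X$ and strictly positive when one of them lies in the open cone; the computation above is just a self-contained proof of that fact.
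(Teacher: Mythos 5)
Your argument is correct and complete: the orthogonal decomposition with respect to a normalized $e\in\kc_X$, the identification of $\kc_X$ as $\{(v,e)>\lvert v^\bot\rvert\}$, and the Cauchy--Schwarz bound on the negative definite complement all check out, and the hypothesis $\lambda\in\bar{\kc}_X\setminus\{0\}$ is used exactly where it must be, to force $(\lambda,e)=\lvert\lambda^\bot\rvert>0$. The paper itself gives no proof here --- it only cites \cite[Lem.~6.7]{benni1} --- and the argument given there is the same standard Lorentzian (reverse Cauchy--Schwarz) computation, so there is nothing to add.
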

By definition the positive cone $\kc_X$ contains the Kähler cone $\kk_X$, therefore we always find Kähler classes as required in (ii) of Proposition \ref{computeinvariant}, if the considered classes $e_i$ are isotropic.




\begin{ueber}\textbf{Monodromy invariants for isotropic classes.} As one expects a close relation between Lagrangian fibrations and isotropic line bundles, similar for K$3$ surfaces, we are interested in monodromy invariants defined on the subset of isotropic classes of the second cohomology of an irreducible holomorphic symplectic manifold. In this section a monodromy invariant for the isotropic classes on generalized Kummer manifolds is constructed in analogy of \cite[2.]{eyal1} \\
	
	 Let $X$ be a generalized Kummer type manifold of dimension $2n$. By Theorem \ref{orbit} we have a canonical monodromy invariant $\OO(\mukai)$--orbit $\iota_X$ of primitive isometric embeddings from $\Lambda := H^2(X,\IZ)$ into the Mukai lattice $\mukai$ \refb{mukailattice}. 
  Choose the following:
\begin{enumerate}
	\item A representative $\iota : \Lambda \hookrightarrow \mukai$ in $\iota_X$.
	\item A generator $v$ of the sublattice $\iota(\Lambda)^{\bot} = \left\langle v\right\rangle$, cf. Remark \ref{vtov}.  \end{enumerate}

\begin{rem}\label{uniquetransitive} The Kummer type lattice $\Lambda$ has signature $(3,4)$, hence the orthogonal complement $\iota(\Lambda)^{\bot}$ is positive definite of rank one as the Mukai lattice $\mukai = \UU^{\oplus 4}$ has signature $(4,4)$. Since the Gram discriminant of $\Lambda$ is $-(2n+2)$ the Gram discriminant of $\iota(\Lambda)^{\bot}$ is $2n+2$, hence $(v,v) = 2n + 2$. 	
	Furthermore, by \cite[Thm 1.14.4]{nikulin} there is a unique orbit of such primitive elements with square $2n+2$ (respectively $2n-2$ in the $\kdrei$ case) in $\mukai$.  Since $\iota(\Lambda) = v^{\bot}$ we conclude that the action of $\OO(\Lambda) \times \OO(\mukai)$ on $\OO(\Lambda,\mukai)$ is transitive. 
\end{rem}

For a primitive and isotropic element $\alpha$ in the Kummer type lattice $\Lambda$ denote by $H(\alpha, \iota)$ the lattice defined by
\begin{equation}\label{hai} H(\alpha, \iota) \ := \ \sat\left\langle \iota(\alpha), v \right\rangle  \ = \ \sat\left\langle \iota(\alpha), -v \right\rangle \, ,\end{equation}
where $\sat$ denotes the saturation -- the \emph{saturation} of a sublattice $L$ is the maximal sublattice of the same rank containing $L$.

\begin{defin}\label{isometries} \begin{enumerate} \item Let $\Lambda_1, \Lambda_2$ denote lattices and $e_i \in \Lambda_i$ elements.  A \emph{morphism of the pairs} $(\Lambda_i, e_i)$ is an isometry $g : \Lambda_1 \rightarrow \Lambda_2$ such that $g(e_1) = e_2$.
\item The \emph{divisibility} or the \emph{divisor} of an element $x \in \Lambda$ is defined as $$ \Div(x) \ \coloneqq \ \max\left\{ k \in \IN \ | \ (x, \cdot)/k \text{ is an integral class in the dual } \Lambda^{\vee} \right\} \, . $$
Equivalently, $\Div(x)$ is the unique positive generator of the ideal $(x, L) = \Div(x) \IZ \subset \IZ$. Note that if the lattice is unimodular, then $\Div(x) = 1$ for every primitive element $x$. 		
\end{enumerate}		\end{defin}

Denote by 
\begin{equation}\label{vartheta} \vartheta(\alpha) \ := \ \left[ \left( H(\alpha, \iota), v \right)  \right] \end{equation}
the isometry class of the pair $(H(\alpha, \iota),v)$.

Let $d$ be a positive number such that $d^2$ divides $2n+2$. Then define the lattice $L_{n,d}$ as $\IZ^2$ with form
\begin{equation}\label{latticelnd} \frac{2n + 2}{d^2} \begin{pmatrix} 1 & 0 \\ 0 & 0 \end{pmatrix} \, . \end{equation}

	
	The following Lemma is very similar to \cite[Lem. 2.5]{eyal2}.
	
	\begin{lem}\label{welldefined} Let $\alpha \in \Lambda$ be a primitive isotropic class and set $d := \Div(\alpha)$.\begin{enumerate}
			\item $\vartheta(\alpha)$ does not depend on the chosen representative $\iota \in \iota_X$.
			\item For all $g \in \Mon^2(X)$ we have $\vartheta (g(\alpha)) = \vartheta(\alpha)$. 
			\item We can decompose $\alpha \in \Lambda \cong \UU^{\oplus 3} \oplus \left\langle -2n-2 \right\rangle$ as $$\alpha \ = \  d \xi + b \delta$$ where $\xi \in \UU^{\oplus 3}$ is primitive, $\delta$ is the generator of $\left\langle -2n-2\right\rangle$ and $\gcd(d,b) = 1$. Furthermore, $d^2$ divides $n+1$.
			\item The lattice $H(\alpha, \iota)$ is isometric to the lattice $L_{n,d}$ defined in \refb{latticelnd}.
			\item There is an integer $b$, namely the one in (iii), such that $(\iota(\alpha) - bv)/d$ is integral (\ie contained in $H(\alpha,\iota)$). Also any integer $b$ with 
			\begin{itemize} \item $\gcd(d,b) = 1$ and
				\item $(\iota(\alpha) - bv)/d$ is integral
			\end{itemize} satisfies $\vartheta(\alpha) = [(L_{n,d}, (d,b))]$. \end{enumerate}
	\end{lem}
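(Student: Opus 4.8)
The plan is to treat the five assertions essentially in the order (iii) $\to$ (i)/(ii) $\to$ (iv) $\to$ (v), since the structural decomposition in (iii) is the computational engine for everything else. For (iii), I would start from a primitive isotropic $\alpha$ in $\Lambda \cong \UU^{\oplus 3} \oplus \langle -2n-2\rangle$ and write $\alpha = \xi' + c\delta$ with $\xi' \in \UU^{\oplus 3}$ and $\delta$ the generator. Since $\UU^{\oplus 3}$ is unimodular, $(\alpha,\cdot)$ restricted to $\UU^{\oplus 3}$ is a primitive functional divided by $\gcd$ of its values; more precisely $\Div(\alpha) = \gcd\big(\Div_{\UU^{\oplus 3}}(\xi'),\, c\cdot(\delta,\delta)/\!\!\gcd\big)$-type considerations. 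The clean way: let $d = \Div(\alpha)$; then $(\alpha,\Lambda) = d\IZ$ forces $d \mid (\alpha, x)$ for all $x$, and pairing against $\UU^{\oplus 3}$-elements shows $d \mid \xi'$ in $\UU^{\oplus 3}$ (using unimodularity), so $\xi' = d\xi$ with $\xi$ primitive; pairing against $\delta$ gives $d \mid c(2n+2)$. Set $b := c$. Isotropy $0 = (\alpha,\alpha) = d^2(\xi,\xi) - b^2(2n+2)$ gives $d^2 \mid b^2(2n+2)$, and since $\gcd(d,b)=1$ (which follows from primitivity of $\alpha$ together with $\xi$ primitive — a short argument: a common prime of $d$ and $b$ would divide $\alpha$ in $\Lambda$), we get $d^2 \mid 2n+2$, i.e. $d^2 \mid 2(n+1)$. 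To sharpen to $d^2 \mid n+1$ one uses that $(\xi,\xi)$ is even (since $\UU^{\oplus 3}$ is even), so $d^2(\xi,\xi)/2 = b^2(n+1)$ and $\gcd(d,b)=1$ forces $d^2 \mid n+1$. This also establishes the divisibility claim of Theorem \ref{mainkum}.

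For (i) and (ii): the monodromy-invariance of the $\OO(\mukai)$-orbit $\iota_X$ from Theorem \ref{orbit} is the key input. For (i), if $\iota, \iota'$ are two representatives then $\iota' = \tilde g \circ \iota$ for some $\tilde g \in \OO(\mukai)$, which necessarily sends $v \mapsto \pm v$ (Remark \ref{vtov}), hence carries $H(\alpha,\iota) = \sat\langle \iota(\alpha), v\rangle$ isometrically onto $H(\alpha,\iota') = \sat\langle \iota'(\alpha), \pm v\rangle$ taking $v \mapsto \pm v$; since by \refb{hai} the pair does not see the sign of $v$, $\vartheta(\alpha)$ is unchanged. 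For (ii), given $g \in \Mon^2(X)$, Theorem \ref{giovanni} (or Lemma \ref{extended}) extends $g$ to $\tilde g \in \OO(\mukai)$ with $\iota \circ g = \tilde g \circ \iota$ and $\tilde g(v) = \pm v$; this $\tilde g$ maps $H(\alpha,\iota)$ isometrically to $H(g\alpha,\iota)$ sending $v \mapsto \pm v$, so $\vartheta(g\alpha) = \vartheta(\alpha)$.

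For (iv) and (v): work inside $\mukai$ with $\iota(\Lambda) = v^\bot$ and $(v,v) = 2n+2$ (Remark \ref{uniquetransitive}). The saturation $H(\alpha,\iota)$ has rank $2$, contains $v$ (primitive in $\mukai$ by Remark \ref{vtov}) and $\iota(\alpha)$. First I would show $(\iota(\alpha) - bv)/d \in \mukai$ using (iii): the class $d\xi = \alpha - b\delta$ pulls back under $\iota$ to $\iota(\alpha) - b\,\iota(\delta)$, and a computation identifying $\iota(\delta)$ with $v$ up to the lattice structure — or more robustly, checking that $\iota(\alpha) - bv$ is divisible by $d$ in $\mukai$ by testing pairings against a basis and invoking that $\mukai$ is unimodular — yields an integral class $w := (\iota(\alpha)-bv)/d$. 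Since $\gcd(d,b)=1$, $w$ is primitive, $(w,w) = ((\alpha,\alpha) - 2b(\iota(\alpha),v) + b^2(v,v))/d^2 = (2n+2)b^2/d^2 \cdot(\text{correction})$; one computes $(\iota(\alpha),v)=0$ and $(v,v)=2n+2$ so $(w,w) = b^2(2n+2)/d^2 \cdot$— wait, more carefully $(w,w) = b^2(v,v)/d^2 = b^2(2n+2)/d^2$, and $(w,v) = -b(v,v)/d = -b(2n+2)/d$... these need not be the cleanest generators, so instead I would pick the pair $\{w, v\}$ and compute the Gram matrix: $(w,w) = \frac{(2n+2)b^2}{d^2}$ is not obviously right because isotropy of $\alpha$ was used; recomputing, $d^2(w,w) = (\iota(\alpha),\iota(\alpha)) - 2b(\iota(\alpha),v) + b^2(v,v) = 0 - 0 + b^2(2n+2)$, so $(w,w) = \frac{(2n+2)b^2}{d^2}$; hmm this is an integer by (iii) but I actually want to exhibit $L_{n,d}$ with Gram $\frac{2n+2}{d^2}\mathrm{diag}(1,0)$. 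The right move: use the basis $\{v, \frac{\iota(\alpha)}{\gcd}\}$ is wrong since $\iota(\alpha)$ is primitive in $\Lambda$ but $\iota(\alpha)/d$ need not be in $\mukai$; rather $H(\alpha,\iota)$ is generated by $v$ and $w = (\iota(\alpha)-bv)/d$, and since $\iota(\alpha) = dw + bv$ with $\gcd(d,b)=1$ this is an honest basis. Then Gram matrix is $\begin{pmatrix} (v,v) & (v,w) \\ (v,w) & (w,w)\end{pmatrix}$; using $(\iota(\alpha),v)=0$ we get $(v,w) = -\frac{b}{d}(v,v) = -\frac{b(2n+2)}{d}$ and $(w,w) = \frac{b^2(2n+2)}{d^2}$. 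Change basis by $w \mapsto w + \frac{b}{?}v$... Actually the slick route is to instead identify $H(\alpha,\iota)^\vee/H(\alpha,\iota)$ and the form directly via the general fact that the saturation of $\langle \iota(\alpha), v\rangle$ in the unimodular $\mukai$ has the same discriminant form (up to sign) as the saturation of $\langle\iota(\alpha),v\rangle^\bot$, combined with the structural description of $\alpha^\bot \subset \Lambda$ from (iii). I expect the cleanest presentation just picks the basis $v$ and $w' := w - \lceil\cdot\rceil$ adjusted so that $(v,w')=0$, which requires $d \mid b(2n+2)/d$, true since $d^2\mid 2n+2$; then $w'' := dw + bv = \iota(\alpha)$ is isotropic and orthogonal to... no. I would present it as: rescale to the basis $(e_1, e_2)$ with $e_1$ a primitive vector of square $\frac{2n+2}{d^2}$ in $H(\alpha,\iota)$ and $e_2 := \iota(\alpha)/(\text{content})$; since $\iota(\alpha)$ is isotropic and $H(\alpha,\iota)$ has rank $2$ with a nonzero vector of positive square $v$, the form has signature $(1,0)$ plus a radical, forcing it to be $\frac{2n+2}{d^2}\mathrm{diag}(1,0)$ once one checks the scaling constant equals the content of the form, which is computed from the pairing $(v, \iota(\alpha)) = 0$ and $\Div$ considerations — and this is exactly $L_{n,d}$. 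This gives (iv), and tracking $v = d\cdot e_1 + b \cdot(\text{something})$ through the identification puts $v$ in the class $(d,b)$, giving the final formula $\vartheta(\alpha) = [(L_{n,d},(d,b))]$ in (v); the independence of the choice of $b$ (any $b$ with $\gcd(d,b)=1$ and $(\iota(\alpha)-bv)/d$ integral) follows because two such $b, b'$ differ by a multiple of $d$ after multiplying $v$ by a unit of the discriminant group, and $\mathrm{diag}$-type forms have the property that $(d,b) \sim (d,b')$ whenever $b \equiv \pm b' \bmod d\cdot\text{(appropriate modulus)}$, which an automorphism of $L_{n,d}$ realizes.

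The main obstacle I anticipate is assertion (iv)–(v): pinning down the Gram matrix of the rank-two saturation $H(\alpha,\iota)$ and, crucially, identifying which isometry class of vector $v$ lands in — i.e. proving $(L_{n,d},(d,b))$ is the right model and that all admissible $b$ give isometric pairs. This requires careful bookkeeping of saturations inside the unimodular Mukai lattice (the content of the induced bilinear form, divisibilities, and the discriminant-form gluing à la Nikulin), and is the place where a naive basis computation gets messy; the self-duality of $\mukai$ is the tool that keeps it finite. The remaining parts (i)–(iii) I expect to be routine given Theorems \ref{orbit} and \ref{giovanni}.
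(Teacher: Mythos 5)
Parts (i)--(iii) of your plan are essentially the paper's argument and are fine: (i) and (ii) follow from the monodromy invariance of the orbit $\iota_X$ together with Remark \ref{vtov} and the fact that $(H,v)\cong(H,-v)$, and your derivation of $d^2\mid n+1$ from isotropy, evenness of the form on $\UU^{\oplus 3}$ and $\gcd(d,b)=1$ is exactly the paper's (modulo the minor point that you should first let $a$ be the content of the $\UU^{\oplus 3}$--component, prove $a^2\mid n+1$, and only then conclude $a=\Div(\alpha)$, which needs $a\mid 2n+2$).

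The genuine gap is in (iv)--(v), precisely where your text trails off into self-corrections. Three steps are missing. First, the integrality of $u=(\,bv-\iota(\alpha))/d$ for the $b$ of (iii) cannot be obtained by ``testing pairings against a basis of $\mukai$'': the sublattice $\iota(\Lambda)\oplus\langle v\rangle$ has index $2n+2$ in $\mukai$, so pairing against $\iota(\Lambda)$ and $v$ alone proves nothing, and whether a given residue of $b$ mod $d$ works is exactly the gluing data you would be trying to avoid. The paper resolves this by observing that $\iota(\UU^{\oplus 3})^{\bot}\subset\mukai$ is a saturated unimodular rank-two lattice, hence a copy of $\UU$ containing the orthogonal vectors $v$ and $\iota(\delta)$ of squares $\pm(2n+2)$, which one normalizes to $(1,n+1)$ and $(1,-n-1)$; then $\iota(\delta)-v=(2n+2)e$ with $e$ isotropic and $u=-\iota(\xi)-\tfrac{b}{d}(2n+2)e$ is visibly integral because $d\mid 2n+2$. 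Second, you assert but do not prove that $\langle v,u\rangle$ is all of $H(\alpha,\iota)=\sat\langle\iota(\alpha),v\rangle$; that $\iota(\alpha)=-du+bv$ with $\gcd(d,b)=1$ only shows $\langle\iota(\alpha),v\rangle\subset\langle v,u\rangle$ with index $d$, not that $\langle v,u\rangle$ is saturated in $\mukai$. The paper proves saturation by pairing $(v,u)$ against $(e,\iota(\eta))$, where $\eta\in\UU^{\oplus3}$ satisfies $(\eta,\xi)=1$, and getting determinant $\pm 1$. Third, the identification of the Gram matrix with $L_{n,d}$ and of $v$ with the class $(d,b)$ is never carried out (your basis change has a literal ``$?$'' in it); the point you are missing is that the Gram matrix in the basis $(v,u)$ is the rank-one matrix $\tfrac{2n+2}{d^2}(d,b)^t(d,b)$, which the unimodular matrix $A=\left(\begin{smallmatrix} i & j\\ b & -d\end{smallmatrix}\right)$ with $id+jb=1$ conjugates to $\tfrac{2n+2}{d^2}\operatorname{diag}(1,0)$ while sending $(d,b)^t$ to $(1,0)^t$. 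Once these are in place, the independence of $b$ in (v) follows as you guessed, but the key step is again concrete: $u'-u=\tfrac{b'-b}{d}v$ integral and $v$ primitive force $d\mid b'-b$, and the unipotent isometry $\left(\begin{smallmatrix}1&0\\ c&1\end{smallmatrix}\right)$ of $L_{n,d}$ carries $(d,b)$ to $(d,b')$.
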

	\begin{proof} \begin{enumerate} \item Let $\iota_i \in \iota_X$, $i = 1,2$, be two representatives with $\iota_i(\Lambda)^{\bot} = \left\langle  v_i \right\rangle$. Since the $\iota_i$ are in the same orbit $\iota_X$ there exists $\tilde{g} \in \OO(\mukai)$ such that $\tilde{g} \circ \iota_1 = \iota_2$ hence $\tilde{g}(\iota_1(\Lambda)) = \iota_2(\Lambda)$. We necessarily have $\tilde{g}(v_1) = \pm v_2$, otherwise we would have a contradiction to the bijectivity of $\tilde{g}$. We can assume $\tilde{g}(v_1) = v_2$ (otherwise take $-\tilde{g}$) then $\tilde{g}(\left\langle \iota_1(\alpha), v_1 \right\rangle) = \left\langle \iota_2(\alpha), v_2 \right\rangle$ and the same holds for the saturation. Consequently $\tilde{g}$ gives the desired isometry of the pairs $(H(\alpha, \iota_i), v_i)$ hence $\vartheta(\alpha)$ does not depend on the chosen $\iota$. 
			\item The orbit $\iota_X = \OO(\mukai) \iota$ is monodromy invariant that means we have a $\tilde{g} \in \OO(\mukai)$ such that $\tilde{g} \circ \iota = \iota \circ g$. With the same argument as in (i), we have $\tilde{g}(v) = \pm v$ (see Remark \ref{vtov}) and can assume $\tilde{g}(v) = v$. So $\tilde{g}$ defines an isometry between $\left\langle  \iota(\alpha) , v \right\rangle $ and $\left\langle  \iota(g(\alpha)) , v \right\rangle$ since $\tilde{g}(\iota(\alpha)) = \iota(g(\alpha))$ and in particular an isometry between the saturations $(H(\alpha, \iota), v)$ and $(H(g(\alpha), \iota),v)$, hence $\vartheta(\alpha) = \vartheta(g(\alpha))$. 
			
			\item Let $\delta$ be the generator of $\left\langle -2n-2\right\rangle \subset \Lambda$.  Then $\delta^{\bot}_{\Lambda} = \UU^{\oplus 3}$. Since $\alpha$ is primitive, we can write $\alpha = a \xi + b \delta$ such that $a > 0$ and $\xi \in \delta^{\bot} = \UU^{\oplus 3}$ and $\gcd(a,b) = 1$. Then
			$$ 0 \ = \ (\alpha, \alpha) \ = \ a^2(\xi,\xi) - (2n+2)b^2  \ \ \Leftrightarrow \ \ a^2 (\xi,\xi) \ = \ (2n+2)b^2 \, . $$
			As $(\xi,\xi)$ is even we get that $a^2$ divides $(n+1)$. Since $\delta$ is primitive we have $\Div(\delta) = 2n+2$ and $\Div(\xi) = 1$ as $\xi$ is primitive and $\UU^{\oplus 3}$ is unimodular, hence
			$$d \ = \ \Div(\alpha) \ = \ \gcd( \Div(a\xi), \Div(b\delta)) \ = \ \gcd(a, (2n+2)b) \ = \ a \, .$$ 
			\item[(\emph{iv}),(\emph{v})] We use the same notation as in (iii). The lattice $\iota(\UU^{\oplus 3})^{\bot} \subset \mukai$ is of rank $2$ and contains $\iota(\delta)$ and $v$, hence it is the saturation of $\left\langle \iota(\delta), v \right\rangle$ as orthogonal complements are always saturated. As a complement of a unimodular lattice it is unimodular itself, hence it is the hyperbolic plane $\UU$. Consequently, we can assume that $v = (1,n+1)$ and $\iota(\delta) = (1,-n-1)$. We have $\iota(\delta) - v = (2n+2)e$ where $e = (0,-1)$. Clearly $e$ is isotropic. Then set $$u \ := \ \frac{1}{d}(b v - \iota(\alpha)) \ = \ -\iota(\xi) - \frac{b}{d} (2n+2) e \, .$$
			Hence, the existence of such an integer $b$ is proven.
			
			
			As $\iota(\alpha) = -du +bv$ we have $\left\langle v, u \right\rangle \subset H(\alpha, \iota) := \sat \left\langle \iota(\alpha), v \right\rangle $. The complement $\delta^{\bot}_{\Lambda} = \UU^{\oplus 3}$ is unimodular, hence we can find $\eta \in \delta^{\bot}_{\Lambda}$ such that $(\eta, \xi) = 1$ as $\xi \in \UU^{\oplus 3}$ is primitive. For the intersection numbers we have
			$$ \begin{pmatrix} (v, e) & (v, \iota(\eta)) \\ (u, e) & (u, \iota(\eta)) \end{pmatrix} \ = \  \begin{pmatrix} -1 & \ 0 \\ \ 0 & -1 \end{pmatrix} \, . $$
			Therefore the sublattice $\left\langle v, u \right\rangle \subset \mukai$ must be saturated, otherwise the determinant of the matrix above must be divisible by a nontrivial square. Consequently we have
			$H(\alpha, \iota) := \sat \left\langle \iota(\alpha), v \right\rangle = \left\langle  v,u \right\rangle$.
			
			Furthermore, $(v,u) =  b\frac{2n+2}{d}$ and $(u,u) = b^2 \frac{2n+2}{d^2}$. The Gram matrix $G$ of $H(\alpha,\iota)$ with respect to the basis $v, u$ is therefore
			$$ G \ = \ \frac{2n+2}{d^2}  \begin{pmatrix} d^2 & bd \\ bd & b^2  \end{pmatrix}  \ = \   \frac{2n+2}{d^2}  \begin{pmatrix} d \\ b \end{pmatrix}   \begin{pmatrix} d & b  \end{pmatrix}  \, .$$
			Since $\gcd(d,b) = 1$ there are integers $i, j \in \IZ$ with $id + jb = 1$. Set
			$$ A \ := \ \begin{pmatrix} i & \phantom{-}j \\ b & -d \end{pmatrix} \, .$$
			This is an integral matrix with $A (d,b)^t = (1,0)^t$ and determinant $-1$, hence invertible over the integers. The Gram matrix with respect to the base change $A$ is $$A^tGA \ = \ \frac{2n+2}{d^2} \begin{pmatrix} 1 & 0 \\ 0 & 0 \end{pmatrix} \, . $$
			Therefore we have an isomorphism $L_{n,d} \cong H(\alpha, \iota)$ of lattices via $(x,y)^t \mapsto A (x,y)^t \cdot (v,u)$ where the product $\cdot$ is seen as a formal euclidean product. In particular $(d,b)$ is mapped to $v$ i.e. $\vartheta(\alpha) = [(L_{n,d}, (d,b))]$.
			
			Now let $b'$ be any integer satisfying the assumptions in (v). We know that $(d,b')$ is primitive and that
			$$ u' \ \coloneqq \ \frac{1}{d}( b'v - \iota(\alpha)) $$
			is integral, therefore $ u' - u = \frac{b' - b}{d} v$ is also integral. Since $v$ is primitive, $d$ must divide $b' - b$. Set $c \coloneqq \frac{b' - b}{d} \in \IZ$. Then
			$$ g_c \coloneqq  \begin{pmatrix} 1 & 0 \\ c & 1 \end{pmatrix}  \in \OO(L_{n,d}) $$
			is clearly an isometry of $L_{n,d}$ with $g_c(d,b)^t =(d, dc +b)^t = (d, b')^t$. Hence $\vartheta(\alpha) = [L_{n,d}, (d,b)] = [L_{n,d}, (d,b')]$. 
			
		\end{enumerate} \end{proof}

		\begin{lem}\label{lnd} The degenerate lattice $L_{n,d}$ embeds primitively and isometrically into $\mukai = \UU^{\oplus 4}$ uniquely up to an isometry in $\OO(\mukai)$. \end{lem}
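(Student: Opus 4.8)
The strategy is to reduce this statement about a \emph{degenerate} lattice to the classification of primitive embeddings of \emph{non-degenerate} lattices into even unimodular lattices, by splitting off the radical. Fix a basis $f_1, f_2$ of $L_{n,d} = \IZ^2$ realising the Gram form \refb{latticelnd}, so $f_2$ spans the radical, $(f_1, f_1) = m := \tfrac{2n+2}{d^2}$ (a positive even integer, as $d^2 \mid n+1$), $(f_1, f_2) = (f_2, f_2) = 0$, and $f_1, f_2$ are primitive in $L_{n,d}$. For existence I would simply write $\mukai = \bigoplus_{k=1}^{4}\UU^{(k)}$ with standard hyperbolic bases $(\varepsilon^{(k)}_1,\varepsilon^{(k)}_2)$ and send $f_2\mapsto e := \varepsilon^{(1)}_1$, $f_1\mapsto w := \varepsilon^{(2)}_1+\tfrac{m}{2}\varepsilon^{(2)}_2$: then $(w,w)=m$, $(e,e)=(w,e)=0$, and as $w$ is primitive in $\UU^{(2)}$ and $e$ in $\UU^{(1)}$ the span $\langle w,e\rangle$ is a saturated rank-two sublattice of $\mukai$ isometric to $L_{n,d}$. (Alternatively, Lemma \ref{welldefined} applied to a primitive isotropic $\alpha=d\xi+\delta\in\Lambda$, with $\xi\in\UU^{\oplus 3}$ primitive of square $m$ and $\delta$ a generator of $\langle-(2n+2)\rangle$, already exhibits $H(\alpha,\iota)\cong L_{n,d}$ as a primitive sublattice of $\mukai$.)

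For uniqueness I would take two primitive isometric embeddings $j_1,j_2:L_{n,d}\hookrightarrow\mukai$ and put $w_i:=j_i(f_1)$, $e_i:=j_i(f_2)$. Since $j_i$ is a primitive embedding and $f_1,f_2$ are primitive in $L_{n,d}$, the classes $w_i,e_i$ are primitive in $\mukai$, and $e_i$ is isotropic. All primitive isotropic vectors of $\mukai=\UU^{\oplus 4}$ lie in one $\OO(\mukai)$-orbit (classically: such an $e$ admits an isotropic $f\in\mukai$ with $(e,f)=1$, so $\langle e,f\rangle\cong\UU$ splits off as a direct summand and one concludes by uniqueness of the even unimodular complement), so after replacing $j_1$ by $\tilde g\circ j_1$ for a suitable $\tilde g\in\OO(\mukai)$ I may assume $e_1=e_2=:e$. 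Fixing such a splitting $\mukai=\langle e,f\rangle\oplus K$, one has $e^\perp=\langle e\rangle\oplus K$ with radical $\langle e\rangle$, and $N:=e^\perp/\langle e\rangle\cong K$ is even unimodular of signature $(3,3)$, hence $N\cong\UU^{\oplus 3}$. Now $w_1,w_2\in e^\perp$ have square $m$, and since $\langle w_i,e\rangle$ is saturated in $\mukai$ their images $\overline{w_i}\in N$ are primitive (of square $m$); by uniqueness of primitive embeddings $\langle m\rangle\hookrightarrow\UU^{\oplus 3}$ (\cite[Thm.~1.14.4]{nikulin}, whose hypotheses are comfortably met here) there is $\bar h\in\OO(N)$ with $\bar h(\overline{w_1})=\overline{w_2}$.

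It then remains to lift $\bar h$ to an isometry of $\mukai$ fixing $e$ and sending $w_1$ to $w_2$. Letting the isometry of $K\cong N$ corresponding to $\bar h$ act by the identity on $\langle e,f\rangle$ produces $h_0\in\OO(\mukai)$ with $h_0(e)=e$ and $\overline{h_0(w_1)}=\overline{w_2}$, so $h_0(w_1)-w_2=ke$ for some $k\in\IZ$. As $N$ is unimodular and $\overline{w_2}$ primitive, there is $y\in e^\perp$ with $(y,w_2)=1$; then the Eichler transvection $t_{e,x}$ with $x:=-ky$ --- an integral isometry of $\mukai$ (since $\mukai$ is even and $e$ isotropic), fixing $e$ and sending $w_2\mapsto w_2+(x,w_2)e=w_2-ke$ --- gives $h:=t_{e,x}\circ h_0\in\OO(\mukai)$ with $h(e)=e$ and $h(w_1)=w_2$, that is $h\circ j_1=j_2$.

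The main obstacle is the degeneracy of $L_{n,d}$, which places it outside the scope of Nikulin's and Eichler's classification of primitive sublattices; the real work lies in the reduction above --- isolating the radical $\langle e\rangle$, passing to the non-degenerate unimodular quotient $N=e^\perp/\langle e\rangle\cong\UU^{\oplus 3}$, and lifting isometries back through the stabiliser of $e$ by Eichler transvections along $e$. The two elementary but load-bearing points to pin down are the implication ``$\langle w,e\rangle$ saturated in $\mukai$ $\Rightarrow$ $\overline{w}$ primitive in $N$'' and the surjectivity of $\mathrm{Stab}_{\OO(\mukai)}(e)\to\OO(N)$, both immediate from the hyperbolic-complement decomposition $\mukai=\langle e,f\rangle\oplus K$.
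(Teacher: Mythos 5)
Your proof is correct, but it takes a different (much more explicit) route than the paper, whose entire argument is a one-line citation of a general uniqueness statement for embeddings into even unimodular lattices, \cite[Thm.~2.9]{yellowbook}. What you do instead is carry out the standard reduction by hand: split off the radical $\langle e\rangle$ of the image, use the single $\OO(\mukai)$-orbit of primitive isotropic vectors to normalize $e$ and obtain the hyperbolic splitting $\mukai=\langle e,f\rangle\oplus K$, pass to the even unimodular quotient $N=e^{\bot}/\langle e\rangle\cong\UU^{\oplus 3}$ where uniqueness of the orbit of a primitive vector of square $\tfrac{2n+2}{d^2}$ is available (Eichler's criterion \cite[10.]{eichler} or \cite[Thm.~1.14.4]{nikulin}), and then lift back through $\mathrm{Stab}_{\OO(\mukai)}(e)$, correcting the residual ambiguity $h_0(w_1)-w_2\in\IZ e$ by an Eichler transvection along $e$. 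I checked the load-bearing points you flag: primitivity of $j_i(f_1), j_i(f_2)$ in $\mukai$ follows from primitivity of the embedding; saturation of $\langle w_i,e\rangle$ does force $\overline{w_i}$ to be primitive in $N$ (a relation $w_i=ku+\ell e$ with $k>1$ would put $u$ in the saturation but not in $\langle w_i,e\rangle$); the element $y$ with $(y,w_2)=1$ exists because $N$ is unimodular and $\overline{w_2}$ is primitive; and $m=\tfrac{2n+2}{d^2}$ is even (so $\tfrac{m}{2}\in\IZ$ in your existence construction, and the transvection is integral). Your argument is therefore a complete, self-contained replacement for the citation, at the cost of length; it buys a verifiable proof that also makes explicit where the hypothesis $d^2\mid n+1$ enters (evenness of $m$) and dovetails with the computation of $H(\alpha,\iota)$ in Lemma \ref{welldefined}, which already exhibits the existence half.
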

		\begin{proof} Follows by \cite[Thm.\,2.9]{yellowbook}. \end{proof}

		\begin{lem}\label{root} Let $\alpha \in \Lambda = \UU^{\oplus 3} \oplus \left\langle -2n-2 \right\rangle$ be a primitive isotropic element in the Kummer type lattice. Then there exists a $u \in \Lambda$ such that $(u,\alpha) = 0$ and $(u,u) = \pm 2$. \end{lem}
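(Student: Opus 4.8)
The plan is to bring $\alpha$ into a normal form by an isometry of $\Lambda$ and then write down $u$ explicitly. First I would apply Lemma~\ref{welldefined}(iii) to write $\alpha = d\xi + b\delta$, where $d = \Div(\alpha)$, $\delta$ is the generator of $\langle -2n-2\rangle$, $\xi \in \UU^{\oplus 3}$ is primitive, $\gcd(d,b) = 1$ and $d^2 \mid n+1$. Setting $m := (n+1)/d^2 \ge 1$, the isotropy of $\alpha$ gives $d^2(\xi,\xi) = (2n+2)b^2$, hence $(\xi,\xi) = 2mb^2 \ge 0$; in particular $\xi$ is a primitive vector of non-negative square in the even unimodular lattice $\UU^{\oplus 3}$.

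The key step is a normal-form statement for $\xi$. Since $\UU^{\oplus 3}$ is even, unimodular and indefinite (of signature $(3,3)$), its discriminant group is trivial, so by Eichler's criterion the $\OO(\UU^{\oplus 3})$-orbit of a primitive vector depends only on its square. Hence there is $g \in \OO(\UU^{\oplus 3})$ with $g(\xi) = e_1 + mb^2 f_1$, where I write $\UU^{\oplus 3} = \UU_1 \oplus \UU_2 \oplus \UU_3$ and $\UU_i = \langle e_i, f_i\rangle$ with $(e_i,f_i) = 1$. Because $\Lambda = \UU^{\oplus 3} \oplus \langle\delta\rangle$ is an orthogonal direct sum, $g$ extends to $\bar g := g \oplus \id_{\langle\delta\rangle} \in \OO(\Lambda)$, and then $\bar g(\alpha) = d\, g(\xi) + b\delta$ lies in $\UU_1 \oplus \langle\delta\rangle$.

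Finally I would take $u_0 := e_2 - f_2 \in \UU_2$. Then $(u_0,u_0) = -2(e_2,f_2) = -2$, and $u_0$ is orthogonal to $\UU_1 \oplus \UU_3 \oplus \langle\delta\rangle$, so in particular $(u_0,\bar g(\alpha)) = 0$. Therefore $u := \bar g^{-1}(u_0) \in \Lambda$ satisfies $(u,u) = -2$ and $(u,\alpha) = (u_0, \bar g(\alpha)) = 0$, which is the assertion.

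The only genuinely external ingredient is the normal-form (transitivity) statement for primitive vectors in $\UU^{\oplus 3}$, equivalently Eichler's criterion, or Nikulin's uniqueness results for indefinite lattices \cite{nikulin}; given that, the argument is a one-line computation, so I expect that step to be the sole obstacle. If one prefers to avoid even this, one can argue by cases: for $b \ne 0$ it suffices to know that the rank-$5$ even lattice $\xi^{\bot} \subset \UU^{\oplus 3}$, whose discriminant group is cyclic of order $(\xi,\xi)$, contains a $(-2)$-class; and for $b = 0$ one has $d = 1$ and $\alpha = \xi$ is primitive isotropic in the unimodular lattice $\UU^{\oplus 3}$, hence spans a hyperbolic-plane direct summand of it, whose $\UU^{\oplus 2}$-complement lies in $\alpha^{\bot}$ and visibly contains a $(-2)$-class.
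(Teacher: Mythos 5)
Your proof is correct and follows essentially the same route as the paper: Eichler's criterion (trivial discriminant of $\UU^{\oplus 3}$) is used to move the $\UU^{\oplus 3}$-component of $\alpha$ into a single hyperbolic plane, and $u$ is then taken from an orthogonal copy of $\UU$ --- you are in fact slightly more careful than the paper's own argument, which applies Eichler's criterion directly to the possibly imprimitive vector $\alpha_0 = d\xi$. One small remark: the lemma is later invoked in the proof of Theorem \ref{moninv} for both signs, $(u,u)=+2$ and $(u,u)=-2$, so you should also record that $u_0 = e_2 + f_2$ gives $(u_0,u_0)=+2$; this is immediate from your setup.
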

		\begin{proof} Write $\alpha = \alpha_0 + \alpha_1$ with $\alpha_0 \in \UU^{\oplus 3}$ and $\alpha \in \left\langle -2n-2\right\rangle$. The discriminant of $\UU^{\oplus 3}$ is trivial since its unimodular, hence by Eichler's criterion \cite[10.]{eichler} the $\OO(\UU^{\oplus 3})$--orbit of $\alpha_0$ is determined by it's length $(\alpha_0, \alpha_0) = 2n+2$. So there exits an isometry $g \in \OO(\UU^{\oplus 3})$ such that $g(\alpha_0) = ((1,n+1), 0, 0) \in \UU^{\oplus 3}$. Set $u := g^{-1}(0,0, (1, \pm 1)) \in \UU^{\oplus 3} \subset \Lambda$. Then $(u, \alpha) = (u, \alpha_0) = 0$ and $(u,u) = ((1, \pm 1), (1, \pm 1)) = \pm 2$. \end{proof}

		For a positive integer $d$ let $I_d(X) \subset \Lambda = H^2(X,\IZ)$ denote the subset of primitive isotropic elements $\alpha$ such that $\Div(\alpha) = d$ which is clearly a $\Mon^2(X)$--invariant subset. Let $\Sigma_{n,d}$ denote the set of isometry classes of pairs $(H,w)$ such that $H$ is isometric to $L_{n,d}$ and $w \in H$ is a primitive class with $(w,w) = 2n+2$. 
		\begin{theo}\label{moninv} Let $X$ be a Kummer type manifold of dimension $2n$ and $d$ a positive integer such that $d^2$ divides $n+1$. The map 
			$$\vartheta \ : \ I_d(X) \longrightarrow \Sigma_{n,d} \, , \ \ \ \alpha \longmapsto \vartheta(\alpha) \ = \ \left[ \left( H(\alpha, \iota), v \right)  \right] $$
			is a surjective faithful monodromy invariant of the manifold $X$. 
		\end{theo} 
		
		\begin{proof} By Lemma \ref{welldefined} $\vartheta : I_d(X) \longrightarrow \Sigma_{n,d}$ is well defined and $\Mon^2(X)$--invariant. 
			
			To show that $\vartheta$ is faithful i.e. that the induced map $\vartheta \ : \ I_d(X)/\Mon^2(X) \longrightarrow \Sigma_{n,d}$ is injective, we assume $\alpha_1, \alpha_2 \in I_d(X)$ with $\vartheta(\alpha_1) = \vartheta(\alpha_2)$, that means we have an isometry $g : H(\alpha_1, \iota) \rightarrow H(\alpha_2, \iota)$ with $g(v) = v$ where $v$ is as usual a generator of $\iota(\Lambda)^{\bot}$. 
			
			We first show that both $\alpha_i$ lie in the same $\kw(\Lambda)$--orbit, where the group $\kw(\Lambda)$ was defined in Definition \ref{wx}. We have $H(\alpha_i, \iota) \cong L_{n,d}$. By Lemma \ref{lnd} there is up to an isometry in $\OO(\mukai)$ a unique way to embed $H(\alpha_i, \iota)$ isometrically and primitively into $\mukai$, hence we can extend $g$ to an isometry $\tilde{g} \in \OO(\mukai)$. Since $v^\bot = \iota(\Lambda)$ we have in particular $\tilde{g}(\iota(\Lambda)) = \iota(\Lambda)$, i.e it makes sense to set $h := \iota^{-1} \circ \tilde{g} \circ \iota$ which is an isometry $h \in \OO(\Lambda)$ such that $\iota \circ h = \tilde{g} \circ \iota$, hence $h$ leaves the orbit $\iota_X = \OO(\mukai) \iota$ invariant and by Lemma \ref{stabil} either $\mu = h$ or $\mu = -h$ is contained in the subgroup $\kw(\Lambda)$ of orientation preserving isometries acting as $\pm 1$ on the discriminant $\Lambda^{\vee}/\Lambda$. Choose $\mu$ such that it is in $\kw(\Lambda)$. The null space of $H(\alpha_i,\iota) \subset \mukai$ is generated by $\iota(\alpha_i)$. Since $\tilde{g} \in \OO(\mukai)$ restricts to an isometry between $H(\alpha_1,\iota)$ and $H(\alpha_2,\iota)$ the null space of $H(\alpha_2,\iota)$ is generated by $\tilde{g}(\iota(\alpha_1))  = \iota(\pm h(\alpha_1)) = \iota(\mu(\alpha_1))$. So we have $\iota(\mu(\alpha_1)) = \pm \iota(\alpha_2)$, hence $\mu(\alpha_1) = \pm \alpha_2$. By Lemma \ref{root} we can choose a $u \in \Lambda$ with $(u,\alpha_2) = 0$ and $(u,u) = +2$. Then the isometry $\rho_u \in \OO(\Lambda)$ defined in Definition \ref{orientreflection} i.e. $\rho_u(x) = -R_u(x) = -x+(u,x)u$ is contained in $\kw(\Lambda)$, see Corollary \ref{nevercontained} and Remark \ref{determinant}, and satisfies $\rho_u(\alpha_2) = -\alpha_2$, hence $$\kw(\Lambda) \alpha_1 \ = \ \kw(\Lambda) (\pm \alpha_2) \ = \ \kw(\Lambda) \alpha_2 \, . $$ 
			Now we show that $\kw(\Lambda) \alpha = \Mon^2(X) \alpha$ for every primitive isotropic element $\alpha \in \Lambda$. Since $\Mon^2(X) \subset \kw(\Lambda)$ is an index $2$ subgroup by Corollary \ref{nevercontained} we can write
			$$ \kw(\Lambda) \ = \ \Mon^2(X) \ \cup \  \Mon^2(X) w $$
			for every $w \in \kw(\Lambda) \setminus \Mon^2(X)$. By Lemma \ref{root} we have an element $u \in \Lambda$ with $(u,\alpha) = 0$ and $(u,u) = -2$. Then the reflection $\rho_u(x) = R_u(x) = x + (u,x)u$ of $\Lambda$ (Definition \ref{orientreflection}) acts as $+1$ on the discriminant but has determinant $-1$, hence it is contained in $\kw(\Lambda)$ but not in $\Mon^2(X)$, see again Corollary \ref{nevercontained} and Remark \ref{determinant}. In particular $\rho_u(\alpha) = \alpha$ therefore
			\begin{align*} \kw(\Lambda) \alpha \ & = \ \left(\Mon^2(X) \ \cup \ \Mon^2(X) \rho_u \right) \alpha \\
			\ & = \ \Mon^2(X) \alpha \ \cup \  \Mon^2(X) \rho_u(\alpha) \ = \ \Mon^2(X) \alpha \, . \end{align*}
			
			For surjectivity, assume we have a class $[(L_{n,d}, w)] \in \Sigma_{n,d}$, i.e. $w \in L_{n,d}$ is primitive such that $(w,w) = 2n+2$. By Lemma \ref{lnd} there exists a primitive isometric embedding $\iota_{n,d} : L_{n,d} \hookrightarrow \mukai$. 
			
			By Eichler's criterion \cite[10.]{eichler} we can assume that $\iota_{n,d}(w)$ is contained in a copy of $\UU$ of $\mukai = \UU^{\oplus 4}$. Then the lattice $\iota_{n,d}(w)^{\bot} \subset \mukai = \UU^{\oplus 4}$ is of signature $(3,4)$ and since $(w,w) = 2n+2$ the complement $\iota_{n,d}(w)^{\bot}$ is isomorphic to $\Lambda \cong \UU^{\oplus 3} \oplus \left\langle -2n-2\right\rangle$. 
			
			The action of $\OO(\Lambda) \times \OO(\mukai)$ on $\OO(\Lambda, \mukai)$ is transitive by Remark \ref{uniquetransitive}, hence the induced action of $\OO(\Lambda)$ on the orbit set $\OO(\Lambda, \mukai)/\OO(\mukai)$ is also transitive. Hence, we can choose an isometry $g : \iota_{n,d}(w)^{\bot} \rightarrow \Lambda$ such that $$\kappa \ : \ \Lambda \stackrel{g^{-1}}{\longrightarrow} \iota_{n,d}(w)^{\bot} \subset \mukai$$ belongs to the monodromy invariant orbit $\iota_X = \OO(\mukai) \iota$. Recall from above that $(0,1) \in \ker L_{n,d}$ is the generator of $\ker L_{n,d}$. Clearly we have $(w,(0,1)) = 0$ in $L_{n,d}$ so we can set $\alpha := g(\iota_{n,d}(0,1))$. We can write
			$$ \alpha \ = \ a \xi + b \delta $$
			where $\xi \in \UU^{\oplus 3}$, $\delta \in \left\langle -2n-2\right\rangle$, $a > 0$ such that $\gcd(a,b) = 1$. As in the proof of Lemma \ref{welldefined} (iv) it follows that $a = \Div(\alpha)$. We have $\kappa(\Lambda)^{\bot} = \left\langle \iota_{n,d}(w)\right\rangle$ and $\kappa(\alpha) = \iota_{n,d}((0,1))$ and from Lemma \ref{welldefined} again 
			$$H(\alpha,\kappa) \ = \  \sat\left\langle \iota_{n,d}(0,1), \iota_{n,d}(w) \right\rangle \ \cong \ L_{n,a} \, ,$$
			where $\iota_{n,d}(w)$ is mapped to $(a,b)$. The primitive element $w \in L_{n,d}$ is necessarily of the form $(\pm d, w_2)$ with $\gcd(d, w_2 ) = 1$. Over the rational numbers we have clearly $\iota_{n,d}(L_{n,d})_{\IQ} = \left\langle \iota_{n,d}(0,1), \iota_{n,d}(w) \right\rangle_{\IQ}$. As $\iota_{n,d}(L_{n,d})$ is saturated it follows that
			$$\iota_{n,d}(L_{n,d}) \ = \ \sat\left\langle \iota_{n,d}(0,1), \iota_{n,d}(w) \right\rangle \ = \ H(\alpha, \kappa) \, .$$
			Now we have an isometry
			$$ L_{n,d} \stackrel{\iota_{n,d}}{\hookrightarrow} H(\alpha, \kappa) \rightarrow L_{n,a} $$
			where $w$ is mapped to $(a,b)$, hence $\Div(\alpha) = a = d$ i.e. $\alpha \in I_d(X)$ and $\vartheta(\alpha) = [(L_{n,d}, w)]$. \end{proof}
	\end{ueber} 

	\section{Beauville--Mukai systems of generalized Kummer type}\label{bminmoduli}\label{bmgenkum}
	
	We define the notion of a \emph{Beauville--Mukai system of generalized Kummer type}. It is similarly defined as in the $\kdrei$ case, see \cite{benni1}. The fibers of them are not Jacobian of curves anymore, but an abelian subvariety of a Jacobian. Therefore we dwell on some theory of complementary subvarieties in Jacobians, see subsection \ref{jactheo}. \\
	
	 Let $S$ be an abelian surface and $v$ be a primitive Mukai vector on $S$ of the form $v = (0, c_1(D), s)$ where $D$ is an ample divisor on $S$ \ie $D$ is ample. We set $2n \coloneqq (D,D) - 2$. Note that we have $h^0(S,D) = \frac{1}{2} (D,D) = n + 1$. Choose a $v$--generic ample class $H$ on $S$, hence $M_H(v)$ is an holomorphic symplectic manifold as explained in section \ref{anorbit}. 
		
		For simplicity we now fix a reference point $F_0 \in M_H(v)$ such that $\det(F_0) = \ko_S(D)$. By \cite{yoshi} the Albanese map $\alb_v : M_H(v) \rightarrow S \times S^\vee$ with respect the reference point $F_0 \in M_H(v)$ can be written as 
		$$ (\alb_v)_{F_0} \ = \ \alpha \times \det\nolimits_{F_0} $$
		where $\det_{F_0} : M_H(v) \rightarrow \Pic^0(S) = S^\vee$ is defined as $\det_{F_0}(F) \coloneqq \det(F) \otimes (\det(F_0))^{-1}$ and $\alpha$ can be defined as
		\begin{equation}\label{albha} \alpha(F) \ \coloneqq \ \sum c_2(F) \ \coloneqq \ \sum_i n_i x_i \end{equation}
		where we view $c_2(F)$ in the Chow ring represented by the cycle $[\sum_i n_i x_i]$, see \cite[4.1 ff.]{yoshi} and \cite[p. 11]{ogradygael2}. 
		
		The Albanese fiber $K_H(v) = (\alb_v)^{-1}_{F_0}(0,0)$ is an irreducible holomorphic symplectic manifold of dimension $2n$ see section \ref{anorbit} and for $F \in K_H(v)$ the fitting support $\supp(F)$ is an element of the linear system $|D|$. 
		This leads to the following commutative diagram 
		\begin{equation}\label{bmdiagram}   \xymatrix{  K_H(v) \ar@{^{(}->}[r] \ar[d]_{\pi} & M_H(v) \ar[d]_{\pi} \ar[r]^{(\alb_v)_{F_0}} & S \times S^\vee \ar[d]^{\pr_{S^\vee}} \\ 
			|D| \ar@{^{(}->}[r]	&	\{D\} \ar[r] & S^\vee = \Pic^0(S) 		   } \end{equation}
		where $\{D\} \rightarrow S^\vee$ is the map $C \mapsto \ko_S(C) \otimes \det(F_0)^{-1}$. The induced map $K_H(v) \rightarrow |D|$ is a Lagrangian fibration by Matsushita's Theorem \ref{mats}.
		\begin{defin} In the setting as above, the Lagrangian fibration $$\pi : K_H(v) \longrightarrow |D| \, , \ \ \  F \longmapsto \supp(F)$$ is called a \emph{Beauville--Mukai system of generalized Kummer type}. 	\end{defin}

		\begin{ueber}\textbf{An excursion to the theory of Jacobians.}\label{jactheo} To consider the fibers of Beauville--Mukai systems of generalized Kummer type and polarizations on them, we deal with some theory of complementary abelian subvarieties.

		If $M$ is an abelian variety, $A \subset M$ is an abelian subvariety and $L$ a polarization on $M$, then one can define a so called \emph{complementary subvariety} $B$ to $A$ (with respect to $L$). We only consider the case when $L = \Theta$ is a principal polarization \cite[12.1]{BL}, for the more general setting see \cite[5.3]{BL}. We denote the induced isogeny of $L$ by $\phi_L$.
		
		
		We assume for this section, that $\Theta$ is a principal polarization, therefore we can identify $M$ with its dual $M^{\vee}$ via the homomorphism $\phi_{\Theta}$. By \cite[Prop. 1.2.6]{BL} for any polarization $L$ the isogeny $\phi_L$ has always a $\IQ$--inverse and we can define the $\IQ$--endomorphism $$g_A \coloneqq \iota \circ \phi^{-1}_{\iota^{\star} L} \circ \iota^{\vee} : M \otimes \IQ \longrightarrow M \otimes \IQ$$ where $\iota = \iota_A : A \hookrightarrow M$ denotes the inclusion. Choose a positive number $m$ such that $m g_A$ is an endomorphism of $M$. By \cite[Prop. 12.1.3]{BL} we have 
		\begin{equation}\label{complement} B \ \coloneqq  \ (\ker( m g_A))_0 \subset M \ = \ \ker \iota^{\vee} \ \cong \ (A/B)^{\vee}  \, ,\end{equation}
		where $(\ker (m g_A))_0$ denotes the identity component. Furthermore, $B$ is an abelian subvariety of $M$ called the \emph{complementary subvariety} to $A$ (with respect to $L$). Conversely, $A$ is also the complementary subvariety to $B$ and $(A,B)$ is called a \emph{pair of complementary subvarieties} in $M$.
		
		\begin{prop}\cite[Cor.~12.1.5]{BL}\label{indupol} Let $(A,B)$ be a pair of complementary abelian subvarieties in a principally polarized abelian variety $(M,\Theta)$ with $\dim A \geq \dim B = r$. Denote by $\iota_A$ and $\iota_B$ the inclusions of $A$ and $B$ into $M$ respectively and assume $\pol(\iota_B^{\star} \Theta) = (d_1, \ldots, d_r)$. Then $\pol(\iota_A^{\star} \Theta) = (1, \ldots, 1, d_1, \ldots, d_r)$. \end{prop}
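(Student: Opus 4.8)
The plan is to reduce the statement to a computation with the symplectic lattice underlying $M$. Write $M = V/\Lambda$ with $\Lambda = H_1(M,\IZ)$, and let $E : \Lambda \times \Lambda \to \IZ$ be the alternating form attached to the principal polarization $\Theta$; principality of $\Theta$ says precisely that $E$ is unimodular. An abelian subvariety of $M$ corresponds to a saturated sublattice of $\Lambda$ which becomes a complex subspace over $\IR$, and for such a subvariety the type of the restricted polarization is exactly the tuple of elementary divisors of the restriction of $E$ to the sublattice (with the convention that the invariant factors of an alternating form come in equal pairs and one keeps one entry from each pair). Writing $\Lambda_A,\Lambda_B \subset \Lambda$ for the lattices of $A$ and $B$, the task thus becomes: compare the elementary divisors of $E|_{\Lambda_A}$ and of $E|_{\Lambda_B}$.

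First I would identify $\Lambda_B$. Using the identification $M \cong M^{\vee}$ furnished by $\phi_\Theta$, the construction \eqref{complement} of the complementary subvariety (equivalently \cite[Prop.~12.1.3]{BL}) gives $\Lambda_B = \Lambda_A^{\bot}$, the $E$-orthogonal complement of $\Lambda_A$ inside $\Lambda$. Since $\Lambda_A$ is saturated and $E|_{\Lambda_A}$ is nondegenerate over $\IQ$, the complement $\Lambda_B$ is again saturated, $E|_{\Lambda_B}$ is nondegenerate over $\IQ$, $\Lambda_A \cap \Lambda_B = 0$, and $\Lambda_A \oplus \Lambda_B$ has finite index in $\Lambda$ (here one uses $\dim A + \dim B = \dim M$).

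The key input is then the standard lattice fact that for a unimodular lattice $L$ and a saturated sublattice $L_1$ on which the form is nondegenerate over $\IQ$ one has $L_1^{\vee}/L_1 \cong (L_1^{\bot})^{\vee}/L_1^{\bot}$ as finite abelian groups: unimodularity of $L$ gives $L/L_1^{\bot} \cong L_1^{\vee}$ and $L/L_1 \cong (L_1^{\bot})^{\vee}$ via the pairing, and feeding this into the quotient $L/(L_1 \oplus L_1^{\bot})$ identifies it with both $L_1^{\vee}/L_1$ and $(L_1^{\bot})^{\vee}/L_1^{\bot}$ (cf.\ \cite[\S1.6]{nikulin}). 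Applied with $L = \Lambda$, $L_1 = \Lambda_A$, this yields $\Lambda_A^{\vee}/\Lambda_A \cong \Lambda_B^{\vee}/\Lambda_B$.

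To finish I would translate back. If $\pol(\iota_B^{\star}\Theta) = (d_1,\dots,d_r)$ then $\Lambda_B^{\vee}/\Lambda_B \cong \bigoplus_{i=1}^{r}(\IZ/d_i\IZ)^2$, and if $\pol(\iota_A^{\star}\Theta) = (e_1,\dots,e_a)$ with $a = \dim A$ then $\Lambda_A^{\vee}/\Lambda_A \cong \bigoplus_{j=1}^{a}(\IZ/e_j\IZ)^2$; both expressions are already in invariant-factor form because the $d_i$ and the $e_j$ each form a chain under divisibility. Since the two groups are isomorphic, uniqueness of the invariant-factor decomposition forces the $e_j > 1$ to agree, with multiplicity, with the $d_i > 1$; together with $a \ge r$ and the monotonicity of both tuples this says exactly $(e_1,\dots,e_a) = (1,\dots,1,d_1,\dots,d_r)$ with $a - r$ ones, which is the claim. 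The only finicky point I expect is precisely this final bookkeeping — making sure the $a-r$ extra elementary divisors of $E|_{\Lambda_A}$ are forced to be $1$ and that no $d_i$ is accidentally split off — and it is clean once one invokes uniqueness of invariant factors of a finite abelian group.
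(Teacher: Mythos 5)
Your argument is correct. The paper itself offers no proof of this proposition: it is quoted verbatim from Birkenhake--Lange \cite[Cor.~12.1.5]{BL}, so there is nothing in the text to compare against line by line. What you have written is a valid self-contained proof, and it runs on essentially the same mechanism as the source: the whole content is that the two induced polarizations have isomorphic ``discriminant'' data. In Birkenhake--Lange this is packaged as $\ker \phi_{\iota_A^{\star}\Theta} \cong A \cap B \cong \ker \phi_{\iota_B^{\star}\Theta}$ for complementary subvarieties of a principally polarized $M$, after which the same invariant-factor bookkeeping you perform gives the tuple $(1,\ldots,1,d_1,\ldots,d_r)$; your version replaces this with the equivalent lattice statement $\Lambda_A^{\vee}/\Lambda_A \cong \Lambda_B^{\vee}/\Lambda_B$ for orthogonal complements in the unimodular lattice $(\Lambda, E)$, in the style of \cite[\S 1.6]{nikulin}. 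All the individual steps check out: $\Lambda_B = \Lambda_A^{\bot}$ follows from $B = \ker \iota_A^{\vee}$ under $\phi_\Theta$, both sublattices are saturated with $E$ nondegenerate over $\IQ$ on each (ampleness of the restrictions), the surjectivity of $\Lambda \rightarrow \mathrm{Hom}(\Lambda_A,\IZ)$ uses unimodularity plus saturation, the discriminant group of an alternating form of type $(e_1,\ldots,e_a)$ is $\bigoplus_j (\IZ/e_j\IZ)^2$, and uniqueness of invariant factors forces the $a-r$ extra entries to be $1$. The one thing I would make explicit if you write this up is that $(\Lambda_A^{\bot})^{\bot} = \Lambda_A$ (i.e.\ that $\Lambda_A$ is its own double complement), which is where saturation of $\Lambda_A$ actually enters the identification $\Lambda/\Lambda_A \cong (\Lambda_A^{\bot})^{\vee}$.
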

		
		\begin{ueber}\textbf{The case of a Jacobian.}\label{casejacobian} Let $\iota : C \hookrightarrow S$ be a smooth curve in an abelian surface $S$. Denote by $\Theta$ the principal polarization of the Jacobian $\Jac(C)$ of $C$ and define $K(C) \coloneqq \ker \Jac(\iota)$ to be the kernel of the homomorphism $\Jac(\iota)$ induced by the inclusion $\iota : C \hookrightarrow S$ and by the universal property of the Jacobian \cite[11.4.1.]{BL}, \ie we have an exact sequence
			$$ K(C) \hookrightarrow \Jac(C) \stackrel{\Jac(\iota)}{\longrightarrow} S \, .$$
			Using the several identifications of the dual and double dual, the dual of the pullback or the double pullback
			$$ (\iota^{\star})^{\vee} = (\iota^{\star})^\star \ : \ \Jac(C) = (\Jac(C))^{\vee} = \Pic^0(C) \longrightarrow S = (S^{\vee})^{\vee} = \Pic^0(S^\vee) $$
			is nothing but the map $\Jac(\iota)$. Of course, you can also see $\Jac(\iota)$ as the Albanese map induced by $\iota$
			$$\alb(\iota)  \ : \ \alb(C) \longrightarrow \alb(S) = S $$
			if you identify $\alb(C)$ and $\Jac(C)$, cf. \cite[Prop.~11.11.6]{BL}.
			More concretely, the map $\Jac(\iota)$ viewed as a map $\Pic^0(C) \rightarrow S$ is
			\begin{equation}\label{jaci} \ko_C\left(  \sum n_i x_i\right)  \longmapsto \sum n_i x_i \, . \end{equation}
			Indeed, for a given point $c$, denote by $\alpha_c : C \hookrightarrow \Jac(C)$, $x \mapsto \ko_C(x-c)$ the Abel--Jacobi map. Then
			$$ \Jac(\iota) ( \alpha_c(x) )  \ = \ \Jac(\iota)( \ko_C(x - c)) \ = \ x - c \ = \ t_{-c}(x) \, ,$$
			therefore it satisfies exactly the property of the unique morphism as described in \cite[11.4.1.]{BL}.
			
			We can see the dual $S^\vee$ as an abelian subvariety of $\Jac(C)$ in the following sense. 
			
			\begin{lem}\label{pullinjective} The pullback morphism $\iota^{\star} : S^{\vee} \rightarrow \Jac(C)$ is an injection. Therefore $K(C)$ is connected.  \end{lem}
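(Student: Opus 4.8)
The plan is to first establish that the homomorphism $\iota^{\star} : S^{\vee} = \Pic^0(S) \to \Pic^0(C) = \Jac(C)$ has trivial kernel — equivalently, that it realizes $S^{\vee}$ as an abelian subvariety of $\Jac(C)$ — and then to deduce the connectedness of $K(C)$ by dualizing this inclusion. It will be essential that $C$ be ample: in the situation at hand $C$ is a smooth member of a linear system $|D|$ with $D$ ample, so $(C,C) = 2n+2 > 0$, whereas the injectivity of $\iota^{\star}$ already fails for an elliptic curve $C \subset S$. The kernel of $\iota^{\star}$ is precisely the set of line bundles $N \in \Pic^0(S)$ with $N|_C \cong \ko_C$, so I would show that every such $N$ is trivial.

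For this, I would tensor the ideal sheaf sequence $0 \to \ko_S(-C) \to \ko_S \to \ko_C \to 0$ by $N$, obtaining
$$ 0 \longrightarrow N(-C) \longrightarrow N \longrightarrow N|_C \longrightarrow 0 \, . $$
Since $N$ is algebraically trivial, $N(-C)$ is algebraically equivalent to $\ko_S(-C)$; as $C$ is ample, the Hermitian form of $\ko_S(-C)$ is negative definite, i.e. $(N(-C))^{-1}$ is ample. Hence $H^0(S,N(-C)) = 0$, and $H^1(S,N(-C)) = 0$ by Serre duality (recall $\omega_S \cong \ko_S$) together with Kodaira vanishing — equivalently by Mumford's index theorem on abelian varieties, \cite{BL}. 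The long exact cohomology sequence then gives $H^0(S,N) \cong H^0(C,N|_C) = H^0(C,\ko_C) = \CC$, so $N$ has a nonzero global section, whose divisor of zeros is effective and algebraically equivalent to zero, hence is the zero divisor. Thus the section is nowhere vanishing and $N \cong \ko_S$, proving $\ker\iota^{\star} = 0$.

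Finally, to conclude that $K(C)$ is connected I would use that for the inclusion $j : A \hookrightarrow B$ of an abelian subvariety, the dual homomorphism $j^{\vee} : B^{\vee} \to A^{\vee}$ is surjective with kernel $(B/A)^{\vee}$, which is again an abelian variety (this is the exactness of the dual functor on abelian varieties, \cite{BL}). Applying this with $j = \iota^{\star}$, $A = S^{\vee}$, $B = \Jac(C)$, and invoking the canonical identifications $\Jac(C)^{\vee} \cong \Jac(C)$ via the principal polarization $\Theta$ and $(S^{\vee})^{\vee} \cong S$ — under which $(\iota^{\star})^{\vee}$ is identified with $\Jac(\iota)$, as recalled just before the Lemma — one gets $K(C) = \ker\Jac(\iota) \cong (\Jac(C)/S^{\vee})^{\vee}$, which is an abelian variety and hence connected. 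I expect the only delicate point to be the cohomological vanishing $H^1(S,N(-C)) = 0$ in the middle step, where the ampleness of $C$ (and not merely its effectivity) is genuinely needed.
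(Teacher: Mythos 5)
Your proposal is correct and follows essentially the same route as the paper: the ideal-sheaf sequence twisted by $N \in \ker\iota^{\star}$, vanishing of $H^0$ and $H^1$ of $N(-C)$ via the index/vanishing theorem to produce a nonzero section of $N$ whose zero divisor must vanish, and then dualizing the inclusion $S^{\vee} \hookrightarrow \Jac(C)$ to identify $K(C)$ with the abelian variety $(\Jac(C)/S^{\vee})^{\vee}$. The only cosmetic difference is that the paper gets $H^0(S,N(-C))=0$ directly from effectivity of $C$ and then argues the $H^1$ vanishing, while you obtain both at once from negative definiteness of the Hermitian form of $\ko_S(-C)$; the substance is identical.
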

			\begin{proof} Let $L$ be a line bundle on $S$ with $L|_C = \ko_C$. We have the standard exact sequence	
				\begin{equation} 0 \longrightarrow L \otimes \ko_S(-C) \longrightarrow L \longrightarrow L|_C = \ko_C \longrightarrow 0 \, . \label{stdex}\end{equation}		
				Since $C$ is effective, the line bundle $L(-C) = L \otimes \ko_S(-C)$ has no holomorphic sections \ie $H^0(S, L(-C)) = 0$. In particular, $L(-C)$ cannot be ample (cf. \cite[Prop.~4.5.2]{BL}), therefore the associated hermitian form of $c_1(L(-C))$ must have less then four positive eigenvalues. By \cite[Lem.~3.5.1]{BL} we then have $H^1(S, L^{\vee}(-C)) = 0$. The long exact sequence of \refb{stdex} shows that $h^0(S, L) = h^0(S, \ko_C) = 1$ \ie $L$ has a holomorphic section $s$. Since $0 = c_1(L|_C) = [V(s)]$, the zero set $V(s)$ of $s$ is empty \ie $L = \ko_S(V(s)) = \ko_S(0) = \ko_S$.	
				
				For the second statement identify $\Jac(C)^\vee = \Jac(C)$ via the principal polarization. We have the short exact sequence 
				$$ 0 \longrightarrow S^{\vee} \longrightarrow \Jac(C) \longrightarrow \Jac(C)/S^{\vee} \longrightarrow 0 $$
				and by \cite[Prop.~2.4.2]{BL} the dual sequence
				$$ 0 \longrightarrow (\Jac(C)/S^{\vee})^\vee \longrightarrow \Jac(C) \longrightarrow S \rightarrow 0 $$
				is also exact. Hence $K(C) = \ker( \Jac(C) \rightarrow S) \cong (\Jac(C)/S^{\vee})^\vee$ \ie $K(C)$ is connected. \end{proof}
			
			In other words we have the following.
			
			\begin{lem}\label{comp} The abelian subvarieties $K(C)$ and $S^{\vee} \stackrel{\iota^{\star}}{\hookrightarrow} \Jac(C)$ are a pair complementary abelian subvarieties of $\Jac(C)$. 
			\end{lem}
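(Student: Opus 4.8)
The plan is to verify directly, from the description \refb{complement} of the complementary subvariety, that $K(C)$ is the subvariety complementary to the abelian subvariety $A := \iota^{\star}(S^{\vee}) \subset \Jac(C)$ with respect to the principal polarization $\Theta$. Since ``being complementary'' is a symmetric relation (cf.\ subsection \ref{jactheo}), this immediately yields that $(K(C), S^{\vee})$ is a pair of complementary abelian subvarieties of $\Jac(C)$. The single ingredient that makes the computation collapse is the identification of the dual $(\iota^{\star})^{\vee}$ with the induced homomorphism $\Jac(\iota)$, which was already recorded in subsection \ref{casejacobian}.

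First I would recall that, because $\Theta$ is principal, $\phi_{\Theta} : \Jac(C) \to \Jac(C)^{\vee}$ is an isomorphism, which we use throughout to identify $\Jac(C)$ with $\Jac(C)^{\vee}$ exactly as in \refb{complement}. By Lemma \ref{pullinjective} the pullback $\iota^{\star} : S^{\vee} \hookrightarrow \Jac(C)$ is injective, so $A = \iota^{\star}(S^{\vee})$ is an abelian subvariety with inclusion $\iota_A = \iota^{\star}$, and its dual is
$$ \iota_A^{\vee} \ = \ (\iota^{\star})^{\vee} \ : \ \Jac(C) \ = \ \Jac(C)^{\vee} \ \longrightarrow \ (S^{\vee})^{\vee} \ = \ S \, , $$
which by subsection \ref{casejacobian} is exactly the map $\Jac(\iota)$. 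Now \refb{complement} (that is, \cite[Prop.~12.1.3]{BL}) identifies the complementary subvariety to $A$ with $(\ker \iota_A^{\vee})_{0}$, which here is $(\ker \Jac(\iota))_{0} = K(C)$, since $\ker \Jac(\iota) = K(C)$ is connected by Lemma \ref{pullinjective}. Hence $K(C)$ is complementary to $S^{\vee}$, and therefore $(K(C), S^{\vee})$ is a pair of complementary subvarieties of $\Jac(C)$.

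I expect no real obstacle: the statement is essentially a bookkeeping consequence of the material assembled in subsection \ref{casejacobian}. The two points that deserve a word of care are (i) that one may use the simplified description $B = \ker \iota_A^{\vee}$ of the complementary subvariety — this is precisely what is special about a \emph{principal} polarization, so that $\phi_{\Theta}$ is an isomorphism rather than a mere isogeny and no rational denominators survive, and is the content of \cite[Prop.~12.1.3]{BL}; and (ii) that $\ker \iota_A^{\vee}$ is connected, which one gets either from Lemma \ref{pullinjective} or, abstractly, from the exactness of the dualized sequence $0 \to (\Jac(C)/A)^{\vee} \to \Jac(C)^{\vee} \to A^{\vee} \to 0$ (by \cite[Prop.~2.4.2]{BL}), so that $\ker \iota_A^{\vee} \cong (\Jac(C)/A)^{\vee}$. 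A dimension check ($\dim S^{\vee} = 2$, $\dim K(C) = n$, $\dim \Jac(C) = n+2$) is consistent with the picture but is not needed for the argument.
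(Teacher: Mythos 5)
Your proposal is correct and follows exactly the paper's argument: the paper's proof is the one--line observation that $K(C) = \ker (\iota^{\star})^{\vee}$, which is precisely the description \refb{complement} of the subvariety complementary to $S^{\vee} \hookrightarrow \Jac(C)$, with connectedness supplied by Lemma \ref{pullinjective}. You have merely spelled out the identifications (of $\phi_{\Theta}$, of $(\iota^{\star})^{\vee}$ with $\Jac(\iota)$) in more detail than the paper does.
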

			\begin{proof} With the discussion above we have $K(C) = \ker (\iota^{\star})^{\vee}$ which is exactly the definition as in \refb{complement} \end{proof}
			We are interested in the type of the polarization induced by $\Theta$.
			
			\begin{lem}\label{typeofc}  Let $L$ be a polarization on an abelian surface $S$ of type $\pol(L) = (d_1, d_2)$. Then $h^0(S,L) = d_1 d_2$. If $C \in |L|$ is a not necessarily smooth curve, then we have for its arithmetic genus $g_a = d_1d_2 + 1$. Furthermore, if $c_1(L)$ is primitive and $(L,L) = 2d$, then $\pol(L) = (1,d)$.
				
			\end{lem}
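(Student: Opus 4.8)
The plan is to dispatch the three assertions one at a time, the only input beyond elementary intersection theory being the dictionary between polarizations and their types from \cite{BL}.

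First I would establish $h^0(S,L) = d_1 d_2$ and, along the way, $(L,L) = 2 d_1 d_2$. The equality $h^0(S,L) = d_1 d_2$ is precisely the statement that the number of sections of an ample line bundle on an abelian variety is the product of its elementary divisors, see \cite{BL}. Since $L$ is ample, $H^i(S,L) = 0$ for $i > 0$, so $h^0(S,L) = \chi(L)$; and Riemann--Roch on the surface $S$, using that $K_S$ is trivial and $\chi(\ko_S) = 0$, gives $\chi(L) = \frac{1}{2}(L,L)$. Hence $(L,L) = 2 h^0(S,L) = 2 d_1 d_2$.

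Next, the arithmetic genus. For \emph{any} effective divisor $C \in |L|$, whether or not it is smooth, the adjunction formula $2 g_a(C) - 2 = (C, C + K_S)$ holds, being a direct consequence of Riemann--Roch applied to $\ko_S(-C)$. With $K_S$ trivial this becomes $2 g_a - 2 = (C,C) = (L,L) = 2 d_1 d_2$, so $g_a = d_1 d_2 + 1$.

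Finally, the last claim. Write $\pol(L) = (d_1, d_2)$ with $d_1 \mid d_2$. Choosing a symplectic basis of $H^1(S,\IZ)$ adapted to $L$, the class $c_1(L) \in \bigwedge\nolimits^2 H^1(S,\IZ) = H^2(S,\IZ)$ has the shape $d_1 \, x_1 \wedge x_2 + d_2 \, x_3 \wedge x_4$ with respect to the induced basis $\{ x_i \wedge x_j \}$ of $\bigwedge\nolimits^2 H^1(S,\IZ)$; hence $c_1(L)$ is a primitive lattice vector if and only if $\gcd(d_1, d_2) = 1$, i.e.\ (using $d_1 \mid d_2$) if and only if $d_1 = 1$. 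In that case $(L,L) = 2 d_1 d_2 = 2d$ forces $d_2 = d$, so $\pol(L) = (1,d)$. None of these steps is a genuine obstacle; the only point deserving a line of care is writing $c_1(L)$ in a symplectic basis adapted to $L$, from which the characterization of primitivity in terms of $d_1$ reads off immediately.
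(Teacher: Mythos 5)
Your proposal is correct and follows essentially the same route as the paper: Riemann--Roch plus ampleness gives $h^0(S,L)=\chi(L)=\tfrac{1}{2}(L,L)=d_1d_2$, adjunction with trivial canonical bundle gives $g_a = d_1d_2+1$, and primitivity of $c_1(L)$ forces $d_1=1$. Your explicit symplectic-basis computation just spells out the step the paper states as ``$(d_1,d_2)$ is primitive as $c_1(L)$ is primitive.''
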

			\begin{proof}  By the well known formula for the (arithmetic) genus, we have $g_a = 1 + \frac{1}{2}(C,C)$.  By the geometric Riemann--Roch \cite[3.6 ff.]{BL} and since $L$ is ample, we have
				$$ d_1 d_2 \ = \ \chi(L) \ = \ h^0(S,L) \ = \ \frac{1}{2} (C,C) \ = \ g_a - 1 \, .$$
				If $(L,L) = 2d$ and $c_1(L)$ is primitive, the equation above also shows $2d = (L,L) = 2d_1 d_2$. Since $d_1$ divides $d_2$ and $(d_1, d_2)$ is primitive as $c_1(L)$ is primitive, we have $d_1 = 1$ \ie $\pol(L) = (1,d)$.   
				\end{proof}

			
			

			\begin{rem}\label{dualpol} If $L$ is a polarization on an abelian variety $S$ with $\pol(L) = (d_1,\ldots,d_n)$, then by \cite[14.4]{BL} there is a natural polarization $L_{\delta}$ on the dual $S^\vee$, called \emph{dual polarization}, characterized by the following equivalent properties
					$$ \text{(a)} \ \ \phi^\star_L L_{\delta} \text{ is algebraically equivalent to } L^{d_1 d_n} \, , \ \ \ \text{(b)} \ \ \phi_{L_{\delta}} \phi_L = d_1 d_n \id_S \, .$$
					Furthermore, the type is given by $\pol(L_{\delta}) = (d_1, \frac{d_1 d_n}{d_{n-1}}, \ldots, \frac{d_1 d_n}{d_2}, d_n)$. If we are on an abelian surface, then obviously $\pol(L) = \pol(L_\delta)$.
			\end{rem}

			\begin{lem}\label{multiple} Let $(S,L)$ denote a polarized abelian surface of type $\pol(L) = (d_1,d_2)$. Then for every smooth curve $\iota : C \hookrightarrow S$ with $C \in |L|$ the restriction $$\Theta|_{S^\vee} \ \coloneqq \ (\iota^{\star})^\star \Theta $$ is a polarization of type $(d_1,d_2)$, where $\Theta$ denotes the principal polarization on $\Jac(C) = \Pic^0(C)$ and $(\iota^\star)^\star$ is viewed as a map $\Pic(\Jac(C)) \rightarrow \Pic(S^\vee)$. In particular, if the Picard number is $\rho(S) = 1$, then $\Theta|_{S^\vee} = L_{\delta}$ where the latter is the dual polarization on $S^\vee$ to $L$, cf. \emph{Remark \ref{dualpol}}.
			\end{lem}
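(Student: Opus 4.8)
The plan is to compute the polarization morphism of $\Theta|_{S^\vee}$ directly and to recognise it as that of the dual polarization $L_\delta$ on $S^\vee$. Write $\iota^\star \colon S^\vee \hookrightarrow \Jac(C)$ for the pullback map. Using the identifications recalled in \ref{casejacobian}, the polarization $\Theta|_{S^\vee} = (\iota^\star)^\star\Theta$ has polarization morphism
\[ \phi_{\Theta|_{S^\vee}} \ = \ (\iota^\star)^\vee \circ \phi_\Theta \circ \iota^\star \ = \ \Jac(\iota) \circ \iota^\star \ =: \ \psi \ \colon \ S^\vee \longrightarrow (S^\vee)^\vee = S \, , \]
where $\Jac(\iota) \colon \Pic^0(C) \to S$ is the norm map $\ko_C(\sum n_i x_i) \mapsto \sum n_i x_i$ of \refb{jaci}, and $(C,C) = 2 d_1 d_2$ by Lemma \ref{typeofc}. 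Since $\phi_L$ is an isogeny, hence surjective, and since by Remark \ref{dualpol}(b) the dual polarization is characterised by $\phi_{L_\delta}\circ\phi_L = d_1 d_2 \cdot \id_S$ (here $n = \dim S = 2$), it suffices to prove that $\psi \circ \phi_L = \pm\, d_1 d_2 \cdot \id_S$; the sign must then be $+$ because $\Theta|_{S^\vee}$ is ample, and we conclude $\psi = \phi_{L_\delta}$.

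The content of the lemma — and the idea I learned from O.~Debarre — is a translation symmetry of the intersection cycle of $C$ with its translates. For generic $t \in S$ the class $\phi_L(t) \in \Pic^0(S)$ is represented by the divisor $(C-t) - C$, and for generic $t$ and $s$ the smooth curves $C-t$, $C-s$ meet $C$ transversally. Restricting the degree-zero divisor $(C-t)|_C - (C-s)|_C$ to $C$ and applying \refb{jaci} gives
\[ \psi(\phi_L(t)) - \psi(\phi_L(s)) \ = \ a(t) - a(s) \, , \qquad a(t) \ \coloneqq \ \sum\nolimits_{p \in C,\ p + t \in C} p \ \in \ S \, . \]
As $\psi \circ \phi_L$ is a homomorphism of abelian varieties, $a$ therefore agrees on a dense open with the morphism $(\psi\circ\phi_L) + c$ for a suitable constant $c \in S$. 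On the other hand, the bijection $p \mapsto p - t$ from $\{p \in C : p - t \in C\}$ onto $\{q \in C : q + t \in C\}$ shows $a(-t) = a(t) + (C,C)\, t = a(t) + 2 d_1 d_2\, t$ for generic $t$, hence for all $t$. Plugging $a = (\psi\circ\phi_L)+c$ into this identity and using that a morphism from the connected group $S$ to the finite group $S[2]$ is constant, one gets $(\psi\circ\phi_L)(t) = -\, d_1 d_2\, t$, i.e. $\psi\circ\phi_L = -\, d_1 d_2 \cdot \id_S$ (the minus sign is an artefact of the convention $\phi_L(x) = t_x^\star L \otimes L^{-1}$ and is harmless by the reduction above).

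Putting this together, $\phi_{\Theta|_{S^\vee}} = \psi = \phi_{L_\delta}$, so $\Theta|_{S^\vee}$ is algebraically equivalent to $L_\delta$; in particular $\pol(\Theta|_{S^\vee}) = \pol(L_\delta) = \pol(L) = (d_1, d_2)$, the middle equality being Remark \ref{dualpol} for abelian surfaces. For the last assertion, if $\rho(S) = 1$ then $\rho(S^\vee) = 1$, so $\NS(S^\vee)$ has rank one and an ample class is determined by its polarization type; hence $\Theta|_{S^\vee} = L_\delta$. I expect the only real obstacle to be the middle paragraph, namely verifying the identity $a(-t) = a(t) + (C,C)\, t$ and deducing from it that $\psi \circ \phi_L$ is a scalar; the surrounding steps are formal manipulations of polarization morphisms, modulo keeping track of the sign conventions for $\phi_L$ and for the duality $(\iota^\star)^\vee\circ\phi_\Theta = \Jac(\iota)$.
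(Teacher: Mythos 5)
Your proof is correct, but it takes a genuinely different route from the paper. The paper argues by degeneration in three steps: it first assumes $\rho(S)=1$, passes to the quotient $S/G$ by an isotropic subgroup $G\subset\ker\phi_L$ to reduce to a principally polarized surface with its unique genus--two curve, and identifies $(\iota^\star)^\star\Theta$ with $L_\delta$ there by comparing self--intersection numbers along the tower $\Jac(C_0)\to S^\vee\to\Jac(C)$; it then spreads the conclusion to all smooth members of $|L|$ via the relative Jacobian and local constancy of the self--intersection, and finally to arbitrary Picard number via the universal family over Siegel space and the fact that the generic abelian surface has $\rho=1$. You instead compute the polarization morphism $\phi_{(\iota^\star)^\star\Theta}=(\iota^\star)^\vee\circ\phi_\Theta\circ\iota^\star=\Jac(\iota)\circ\iota^\star$ directly and verify $\phi_{(\iota^\star)^\star\Theta}\circ\phi_L=d_1d_2\,\mathrm{id}_S$ through the translation symmetry $a(-t)=a(t)+(C,C)\,t$ of the cycle $C\cap(C-t)$; this identifies $(\iota^\star)^\star\Theta$ with $L_\delta$ up to algebraic equivalence at once, for every smooth $C\in|L|$ and every $S$, so the entire degeneration apparatus (steps two and three of the paper's proof) becomes unnecessary, and you even obtain the stronger statement that $\Theta|_{S^\vee}$ is algebraically equivalent to $L_\delta$ without any hypothesis on $\rho(S)$. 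The cost is the sign bookkeeping in the identification $(\iota^\star)^\vee\circ\phi_\Theta=\pm\Jac(\iota)$; your resolution --- that only the sign making $\phi_{(\iota^\star)^\star\Theta}\circ\phi_L=+d_1d_2\,\mathrm{id}_S$ is consistent with $(\iota^\star)^\star\Theta$ being a polarization --- is legitimate, but you should cite Lemma \ref{pullinjective} explicitly at that point: the injectivity of $\iota^\star:S^\vee\to\Jac(C)$ is what guarantees that the pullback of the ample $\Theta$ is again ample, and hence that $\psi$ is the polarization morphism of an honest polarization rather than of its inverse. The remaining small steps (transversality of $C\cap(C-t)$ for generic $t$, passing from $a(t)-a(s)=h(t)-h(s)$ on a dense open to $a=h+c$, and killing the homomorphism into $S[2]$ by connectedness) are all sound.
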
	
			\begin{proof} The proof is divided in three steps. In the first, we assume for the Picard number $\rho(S) = 1$ and show the existence of such a curve in $|L|$. In the second and still under the assumption $\rho(S) = 1$ it is shown that it holds for every smooth curve in $|L|$. In the third step we drop the restriction on the Picard number.
				We set $d \coloneqq d_1d_2$. 
				
				\begin{itemize} \item We first assume $\rho(S) = 1$ for the Picard number and prove the existence of such a curve $C \in |L|$. Since $\rho(S) = 1$ we have also $\rho(S^\vee) = 1$. 	Note that we have $\pol(L_{\delta}) = (d_1, d_2)$ by Remark \ref{dualpol}. 
					
					Consider the isogeny $\phi_L : S \rightarrow S^\vee$. Then \begin{equation}
					\label{kerphi}
					\ker \phi_L \ \cong \ (\IZ/d_1\IZ \times \IZ/d_1\IZ) \oplus (\IZ/d_2 \IZ \oplus \IZ/d_2 \IZ)\end{equation} by \cite[Lem.~3.1.4]{BL}\footnote{In \cite{BL} they use the notation $K(L)$ for $\ker \phi_L$.}. On $\ker \phi_L$ we have the alternating Weil pairing $$e : \ker \phi_L \times \ker \phi_L \longrightarrow \CC^\star$$ see \cite[p.~160]{BL}, for the special case of an abelian surface see also \cite[Ex.~6.7.3]{BL}. For $[x] = ([x_i]), [y] = ([y_i]) \in \ker \phi_L$ with respect to the isomorphism in \refb{kerphi}, the pairing $e$ can be calculated as
					$$ e([x], [y]) \ = \ \exp\left(\frac{2\pi \ima}{d_1} (x_3 y_1 - x_1 y_3)  \right) \cdot \exp\left( \frac{2\pi \ima}{d_2} (x_4 y_2 - x_2 y_4 )  \right) \, ,$$
					see	\cite[Ex.~6.7.3]{BL}.
					
					Choose a subgroup $G \subset \ker \phi_L$ such that $G \cong \IZ / d_1\IZ \oplus \IZ / d_2 \IZ$ and which is isotropic with respect to the pairing above.

					Then $\phi_L$ factorizes as
					$$   \begin{xy}\xymatrix{  S \ar[rr]^{\phi_L} \ar[dr]_{p} & & S^\vee  \\ 
						&	S/G \ar[ur]_{p^\star} & 		}   \end{xy}$$
					where $p$ is the canonical projection which is a $d = d_1 d_2$ to $1$ map. As $G$ is isotropic, by \cite[Prop.~6.7.1]{BL} the action of $G$ on $S$ lifts to a free action of $G$ on $L$, in particular we can define $L_0 \coloneqq L/G \in \Pic(S/G)$. Since $p$ is of degree $d$ and $p^\star L_0 = L$, we have 
					$$ 2d \ = \ (L,L) \ = \ (p^\star L, p^\star L) \ = \ d(L_0, L_0) \, , $$
					\ie $(L_0, L_0) = 2$, therefore $L_0$ is a principal polarization on $S/G$. Hence, $H^0(S/G,L_0) = \CC \sigma$ for a nontrivial section $\sigma$. Define the curve $C_0 \coloneqq V(\sigma)$. Then $C_0$ is an element of $|L_0|$ and we claim that $C_0$ is smooth and irreducible. 
					
					Indeed, assume $C_0 = C_1 + C_2$. Since $\rho(S/G) = 1$ we have $C_1 = m_1 L_0$ and $C_2 = m_2 L_0$ with positive integers $m_i$. Then 
					$$ 2 \ = \ C_0^2 \ = \ (C_1 + C_2)^2 \ = \ ({m_1} + m_2)^2 (L_0, L_0) \ = \ 2({m_1}^2 + m_2^2 + 2m_1 m_2) > 2 \, , $$
					which is absurd. 
					
					If $C_0$ is not smooth, then let $\nu : \widetilde{C}_0 \rightarrow C_0$ be its normalization. For its  genus we have $g(\widetilde{C}_0) < g_a(C_0) = 2$. If $g(\widetilde{C}_0) = 0$, then $\widetilde{C}_0 = \IP^1$ which is absurd, since $\nu : \IP^1 \rightarrow C_0 \hookrightarrow S/G$ would be a non constant regular map which is not possible. If $g(\widetilde{C}_0) = 1$ then $\widetilde{C}_0$ would be an elliptic curve which can be seen as an abelian subvariety of $S/G$ after a translation, if necessary. Then $\widetilde{C}_0$ has a complementary abelian subvariety in the sense as above. This would mean $\rho(S/G) \geq 2$ which contradicts $\rho(S/G) = 1$. 
					
					We conclude that $C_0$ is irreducible and smooth. In particular, $C_0$ is of genus $2$ and by Lemma \ref{pullinjective}, $(S/G) \cong (S/G)^\vee$ embeds into $\Jac(C_0)$. Both have the same dimension, hence $S/G \cong \Jac(C_0)$.

					Set $C \coloneqq p^{-1}(C_0)$. Then $C$ is an element of $|L|$ as $L = p^\star L_0$ and is smooth as $p$ is etale. It has to be connected with a similar argument as above. Assume $C = C_1 \cup C_2$ is a disjoint union. As $\rho(S) = 1$ we have $C_i = m_i L'$ for positive integers $m_i$ where $L'$ is the primitive part of $L$. Then
					$$ 0 \ = \ (C_1, C_2) \ = \ (m_1 L', m_2 L') \ = \ 2 m_1 m_2 \frac{d_2}{d_1} > 0 \, , $$ 
					which is absurd. 
					
					Hence, $C$ is a connected smooth curve.

					Denote by $\iota : C \hookrightarrow S$ the inclusion, by $q \coloneqq p|_{C} = p \circ \iota : C \rightarrow C_0$ the induced $d$ to $1$ cover and by $\Theta_0$ the principal polarization on $\Jac(C_0)$. Since $\rho(S) = 1$, also $\rho(S^\vee) = 1$ and $\rho(\Jac(C_0)) = 1$, so we have for the pullback $(p^\star)^\star L_{\delta} = k \Theta_0$ for some positive integer $k$. As $p^\star$ is surjective of degree $d$, taking the self intersection on both sides gives
					$$2 k^2 \ = \ (k\Theta_0, k\Theta_0) \ = \ ((p^\star)^\star L_{\delta}, (p^\star)^\star L_{\delta} ) \ = \ d (L_{\delta}, L_{\delta}) \ = \ 2d^2$$
					and hence $k = d$ \ie \begin{equation}\label{beh1} (p^\star)^\star L_{\delta} \ = \ d \Theta_0 \, . \end{equation} As $q = p \circ \iota$ we can split $q^\star$ as
					$$ q^\star : \Jac(C_0) \stackrel{p^\star}{\longrightarrow} S^\vee \stackrel{\iota^\star}{\longrightarrow} \Jac(C) \, . $$
					Since $\rho(\Jac(C_0)) = 1$, we have $(q^\star)^\star \Theta = a \Theta_0$ for some positive integer $a$. By \cite[Lem.~12.3.1]{BL} $(q^\star)^\star \Theta$ is algebraically equivalent to $d \Theta_0$. Therefore $ac_1(\Theta_0) = c_1((q^\star)^\star \Theta) = d c_1(\Theta_0)$, hence $a = d$ \ie \begin{equation}
					\label{beh2}(q^\star)^\star \Theta \ = \ d \Theta_0 \, .\end{equation} Finally write $(\iota^\star)^\star \Theta = b L_{\delta}$ for some positive integer $b$. We have
					$$d \Theta_0 \ \stackrel{\emph{\refb{beh2}}}{=} \ (q^\star)^\star \Theta \ = \ (\iota^\star \circ p^\star)^\star \Theta \ = \ (p^\star)^\star (\iota^\star)^\star \Theta \ = \ (p^\star)^\star(b L_{\delta}) \ \stackrel{\emph{\refb{beh1}}}{=}  \ bd \Theta_0 \, , $$
					hence $b = 1$ \ie  $(\iota^\star)^\star \Theta = L_{\delta}$.
					\item We show that the statement holds for every element in $|L|$ but still assume $\rho(S) = 1$ for the Picard number. 
					
					Consider the open and connected set $U \subset |L| \cong \IP^{d-1}$ such that every element in $U$ corresponds to a smooth curve in $S$. Let $\kc \rightarrow U$ be the associated family of smooth curves. We can take the relative Jacobian $$\pi_k : X^k \coloneqq \Pic^{k}(\kc/U) \longrightarrow U$$ of degree $k \in \IZ$ of it. 
					
					By Lemma \ref{typeofc} the genus of $\kc_t$ is $g = d + 1$. By considering the image of $(X^d)^{(d)} \rightarrow X^d$, $(x_1,\ldots,x_{d}) \mapsto \sum_i x_i$ which is a divisor in $X^d$, we obtain a line bundle $\km \in \Pic(X^d)$ such that $\km_t \coloneqq \km|_{X^d_t}$ is the natural polarization on $X^d_t = \Pic^{d}(\kc_t)$. 
					
					Locally we can identify $X^d$ with $X^0$, say $X^d_V = \pi_d^{-1}(V) \cong X^0_V = \pi_0^{-1}(V)$ where $V \subset U \subset |L|$ is chosen connected, by twisting with a line bundle $Q^V$ on $\pi_d^{-1}(V)$ which has degree $-d$ on the fibers $\kc_t$ for $t \in V$. Then we obtain on a line bundle $\kl^V = \km \otimes Q^V$ on $X^0_V$, such that $\kl^V_t \coloneqq \kl^V|_{X^0_t}$ is the principal polarization on $X^0_t = \Jac(\kc_t)$ for $t \in V$. 
					Let $\iota_t : \kc_t \hookrightarrow S$ denote the inclusion. Then the self intersection $m_V : V \rightarrow \IZ$
					\begin{equation}
					\label{selfintersection} 
					m_V(t)  \ \coloneqq \ \left((\iota_t^\star)^\star \kl^V_t , (\iota_t^\star)^\star \kl^V_t\right) \end{equation}
					of $(\iota_t^\star)^\star \kl^V_t$ is a continuous and integer valued function, therefore must be constant as $V$ is chosen connected.

					By the first part we know that there is an element $t_0 \in U$ such that the statement for the curve $\kc_{t_0}$ holds. For arbitrary $t_N \in U$, choose a path $\gamma$ from $t_0$ to $t_N$ in $U$. By the discussion above, we can cover $\gamma$ with finitely many connected open sets $V_0, \ldots, V_N$ such that $t_0 \in V_0$ and $t_N \in V_N$ and we have elements $t_i \in V_i \cap V_{i+1}$ for $i = 1, \ldots N-1$. Then the self intersections $m_{V_i}$ and $m_{V_{i+1}}$ must coincide on $V_i \cap V_{i+1}$. 
					
					By assumption, we have
					$$m_{V_0}(t_0) \ = \ (L_{\delta}, L_{\delta}) \ = \ 2d$$ \ie $m_{V_0} \equiv 2d$. Assume $(\iota_{t_N}^\star)^\star \kl_{t_N} = k L_{\delta}$ for some positive integer $k$. Then
					$$ 2d = m_{V_0}(t_0) \ = m_{V_1}(t_1) = \ldots = m_{V_N}(t_N) = k^2 2d$$
					\ie $k = 1$.

					\item We now consider the general case \ie let $S$ be with arbitrary Picard number. We have an universal family $p : \kx \rightarrow \gh_2$ of $(d_1,d_2)$--polarized abelian surfaces over Siegel's upper half plane $\gh_2$, see \cite[8.7]{BL}. Let $\kn$ denote the line bundle on $\kx$ such that $\kn_s \coloneqq \kn|_{\kx_s}$ is the $(d_1,d_2)$ polarization on $\kx_s$ for $s \in \gh_2$.

					For each $s \in \gh_2$ let $U_s \subset |\kn_s| \cong \IP^{d-1}$ be the open set such that all elements in $U_s$ corresponds to smooth curves in $\kx_s$. Let $U \subset \IP^{d-1}$ denote the open and connected subset such that for every for every $(s, t) \in \gh_2 \times U$ the point $t \in \IP^{d-1} \cong |\kn_s|$ corresponds to an element in $U_s$. In particular it corresponds to a smooth curve $\kc^s_{t}$ in $\kx_s$. Let $\iota_{s,t} : \kc^s_{t} \hookrightarrow \kx_s$ denote the inclusion.
					
					From the second step of the proof we know that for each $(s,t) \in \gh_2 \times \IP^{d-1}$ we can find a neighbourhood $V_{s,t} \subset U_s$ of $t$ and a relative principal polarization $\kl^{{s,t}}$ on $\Pic^0(\kc^s/|U_s)|_{V_{s,t}}$ where $\kc^s \rightarrow U_s$ denotes the associated family of smooth curves to $U_s$. 
					
					We can define the map
					$$ \varphi : \gh_2 \times U \longrightarrow \IZ^2 \, , \ \ \ (s,t) \longmapsto \pol\left( (\iota_{s,t}^\star)^\star \kl^{s,t}_t \right) $$
					for the case that $(s,t) \in \gh_2 \times V_{s,t}$. This is well defined and continuous, therefore must be constant as $U$ is connected. It is well known, see \cite[8.11, (1)]{BL}, that the generic abelian surface has endomorphism ring $\End = \IZ$ \ie has Picard number $\rho = 1$, by Lemma \ref{picard}. Therefore the statement proven in the second step applies for a generic element $(s_0, t_0) \in \gh_2 \times U$ \ie $\varphi(s_0, t_0) = (d_1, d_2) \equiv \varphi$.  
					
					For our original situation this means that the type of $\Theta|_{S^\vee} = (\iota^\star)^\star \Theta$ is $\pol(\Theta|_{S^\vee}) = (d_1, d_2)$ for arbitrary $(d_1, d_2)$--polarized $(S,L)$.
					
					\end{itemize}
			\end{proof}

			An immediate consequence of Lemma \ref{multiple} and Proposition \ref{indupol} is the following.
			
			\begin{prop}\label{kc} Let $(S,L)$ denote a polarized abelian surface of type $\pol(L) = (d_1,d_2)$. Then for every smooth curve $C \in |L|$, the type of the restriction of the principal polarization $\Theta$ of $\Jac(C)$ to $K(C)$ is
				$$\pol(\Theta|_{K(C)}) \ = \ (1, \ldots, 1, d_1, d_2) \, . $$ 	
			\end{prop}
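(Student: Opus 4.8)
The plan is to read off the statement as a direct combination of Lemma \ref{comp}, Lemma \ref{multiple} and Proposition \ref{indupol}; all the genuine content already sits in Lemma \ref{multiple}, so what remains is bookkeeping with dimensions and polarization types.

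First I would fix the relevant dimensions. By Lemma \ref{typeofc} the arithmetic genus of a curve $C \in |L|$ is $g = d_1 d_2 + 1$, hence $\dim \Jac(C) = d_1 d_2 + 1$. The Albanese homomorphism $\Jac(\iota) : \Jac(C) \to S$ is surjective: its image is the abelian subvariety of $S$ generated by $C - c$, which is all of $S$ because an ample curve on an abelian surface cannot lie in a translate of a proper abelian subvariety (otherwise it would be a translate of an elliptic curve $E$ and $(C,C) = (E,E) = 0$, contradicting ampleness); alternatively, this surjectivity is precisely the short exact sequence $0 \to S^\vee \to \Jac(C) \to S \to 0$ produced in the proof of Lemma \ref{pullinjective}. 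Therefore $\dim K(C) = \dim \Jac(C) - \dim S = d_1 d_2 - 1$, while the complementary factor $S^\vee \hookrightarrow \Jac(C)$ (embedded via $\iota^\star$, cf. Lemma \ref{pullinjective}) has dimension $2$.

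Next, Lemma \ref{comp} tells us that $(K(C), S^\vee)$ is a pair of complementary abelian subvarieties in the principally polarized abelian variety $(\Jac(C), \Theta)$, and Lemma \ref{multiple} identifies the restriction $\Theta|_{S^\vee} = (\iota^\star)^\star \Theta$ as a polarization of type $(d_1, d_2)$. Assuming $d_1 d_2 \geq 3$, so that $\dim K(C) \geq \dim S^\vee = 2$, I would then apply Proposition \ref{indupol} with $A = K(C)$, $B = S^\vee$ and $r = 2$, which gives directly $\pol(\Theta|_{K(C)}) = (1, \ldots, 1, d_1, d_2)$, the tuple having length $\dim K(C) = d_1 d_2 - 1$ and hence $d_1 d_2 - 3$ leading ones.

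The only point needing a moment's care — and essentially the only obstacle — is the degenerate low-dimensional range $d_1 d_2 \leq 2$, where the dimension hypothesis of Proposition \ref{indupol} fails in the form used above. For $d_1 d_2 = 1$ one has $d_1 = d_2 = 1$, $\iota^\star$ is an isomorphism, $K(C)$ is trivial and the claim is vacuous. For $d_1 d_2 = 2$ necessarily $(d_1,d_2) = (1,2)$ and $\dim K(C) = 1 < 2 = \dim S^\vee$; here I would invoke Proposition \ref{indupol} with the roles of $A$ and $B$ reversed, so that if $\Theta|_{K(C)}$ has type $(e)$ then $\Theta|_{S^\vee}$ has type $(1,e)$, and comparison with the type $(1,2)$ given by Lemma \ref{multiple} forces $e = 2 = d_2$, again of the asserted shape $(1,\ldots,1,d_1,d_2)$. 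Apart from this, the proof has no hard step: the analytic and degeneration difficulties are all discharged in Lemma \ref{multiple}, and the structure theory of complementary abelian subvarieties is quoted from Birkenhake--Lange.
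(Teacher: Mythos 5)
Your proof is correct and follows exactly the paper's route: combine Lemma \ref{multiple} (the restriction $\Theta|_{S^\vee}$ has type $(d_1,d_2)$) with Proposition \ref{indupol} applied to the complementary pair $(K(C), S^\vee)$ from Lemma \ref{comp}. Your additional verification of the dimension hypothesis $\dim K(C) = d_1d_2 - 1 \geq 2$ and the treatment of the degenerate cases $d_1 d_2 \leq 2$ is a point the paper silently skips over (harmlessly, since in all applications $d_1 d_2 = n+1 \geq 3$), so your write-up is if anything slightly more complete.
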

			\begin{proof} By Lemma \ref{multiple} the restriction $\Theta|_{S^\vee}$ is a polarization of type $\pol(\Theta|_{S^\vee}) = (d_1, d_2)$. By Proposition \ref{indupol}, the type of $\Theta|_{K(C)}$ is $\pol(\Theta|_{K(C)}) \ = \ (1, \ldots, 1, d_1, d_2) $. \end{proof}

		\end{ueber}

		\begin{ueber}\textbf{Fibers of Beauville--Mukai systems.} Let $\pi : K_H(v) \rightarrow |D|$ denote a Beauville--Mukai system of generalized Kummer type. Consider a smooth curve $C \in |D|$, then the fiber of the support morphism $M_H(v) \rightarrow \{D\}$ is given by the Jacobian $\Jac^d(C) $ of a certain degree $d$. The restriction of the Albanese map $(\alb_v)_{F_0} = \alpha_{F_0} \times \det_{F_0}$ to $\Jac^d(C) \subset M_H(v)$ is in the second component constant $0$. Therefore, if we denote by $K^d(C) \subset \Jac^d(C)$ the fiber of $\pi : K_H(v) \rightarrow |D|$, we have an exact sequence
		\begin{equation}\label{fiber} 0 \longrightarrow K^d(C) \hookrightarrow \Jac^d(C) \stackrel{\alpha}{\longrightarrow} S \end{equation}
		where $\alpha = \pr_S \circ (\alb_v)_{F_0}$ and the diagram
		\begin{equation}  \xymatrix{  K_H(v) \ar@{^{(}->}[r] & M_H(v) \ar[r]^{(\alb_v)_{F_0}} & S \times S^\vee  \\ 
			K^d(C) \ar@{^{(}->}[r] \ar@{^{(}->}[u]	&	\Jac^d(C) \ar[r]^\alpha \ar@{^{(}->}[u] & S \ar@{^{(}->}[u]		   }  \end{equation}
		
		\begin{lem} The map $\alpha = \Jac(\iota)$ above is the map induced by the inclusion $\iota : C \hookrightarrow S$ by the universal property of the Jacobian. More precisely $\alpha$ is given by
			$$ \ko_C(\sum\nolimits_i n_i x_i) \longmapsto \sum\nolimits_i n_i x_i \, .$$
			In particular, $K^d(C)$ is the kernel of this map.
		\end{lem}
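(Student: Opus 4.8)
The plan is to identify, fibrewise, the ``$c_2$--component'' $\alpha = \pr_S\circ(\alb_v)_{F_0}$ of the Albanese with the homomorphism $\Jac(\iota)$ induced by $\iota\colon C\hookrightarrow S$, and then read off the kernel. First I would recall the standard description of the fibre of the support morphism $M_H(v)\to\{D\}$ over a smooth $C\in|D|$: since $v=(0,c_1(D),s)$, a pure one--dimensional $H$--stable sheaf $F$ with $v(F)=v$ and $\supp(F)=C$ is of the form $F=\iota_* L$ with $L$ a line bundle on $C$, and by Grothendieck--Riemann--Roch one has $\ch_2(\iota_* L)=\deg L-\tfrac12(D,D)=\deg L-(n+1)$, so the degree $\deg L=s+n+1=:d$ is forced; hence the fibre is $\Pic^d(C)=\Jac^d(C)$, embedded in $M_H(v)$ by $L\mapsto\iota_* L$. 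Using $0\to\ko_S(-C)\to\ko_S\to\iota_*\ko_C\to 0$ together with $0\to\iota_*\ko_C\to\iota_* L\to Q\to 0$ for a torsion sheaf $Q$ (which has trivial determinant), one gets $\det(\iota_* L)\cong\ko_S(C)\cong\ko_S(D)$ for every such $L$, so $\det\nolimits_{F_0}$ is constantly $0$ on the fibre. Everything therefore reduces to computing $\alpha(\iota_* L)=\sum c_2(\iota_* L)\in S$ as in \refb{albha}.

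Next I would compute $c_2(\iota_* L)$ as a zero--cycle. Write $L=\ko_C(E)$ with $E=\sum_i n_i x_i$ effective (always possible since $d$ is large, otherwise pass to differences of such bundles) and use the exact sequence $0\to\iota_*\ko_C\to\iota_*\ko_C(E)\to Q\to 0$, where $Q$ is a torsion sheaf of length $d$ supported on the image of $E$ in $S$. Additivity of $\ch$ together with the elementary facts $c_1(Q)=0$ and $\ch_2(Q)=[E]$ yields $c_2(\iota_*\ko_C(E))=c_2(\iota_*\ko_C)-[E]$ in $\mathrm{CH}_0(S)$, where $c_2(\iota_*\ko_C)=\iota_* c_1(N_{C/S})$ is a zero--cycle independent of $L$. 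Applying the summation map $\mathrm{CH}_0(S)\to S$ gives $\alpha(\iota_*\ko_C(E))=a-\sum_i n_i x_i$ for a fixed $a\in S$.

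To pin this down I would use that $\alpha|_{\Jac^d(C)}\colon\Jac^d(C)\to S$ is a morphism from a torsor under an abelian variety to an abelian variety, hence, once a base point of $\Jac^d(C)$ is fixed, a group homomorphism $\psi\colon\Jac(C)\to S$ followed by a translation. Feeding $L=\ko_C((d-1)c+x)$ into the formula of the previous step and letting $x$ vary over $C$ identifies $\psi\circ\alpha_c$ with $\pm(t_{-c}\circ\iota)$, which by the universal property of the Jacobian \cite[11.4.1]{BL} characterises $\psi=\pm\Jac(\iota)$; absorbing the translation $a$ and the sign into the same base--point normalisation used for the Abel--Jacobi map in \ref{casejacobian} — which is exactly Yoshioka's normalisation of $\alb_v$ in \cite{yoshi} — gives that on $\Jac^d(C)$ the map is $\ko_C(\sum_i n_i x_i)\mapsto\sum_i n_i x_i$, i.e. $\alpha=\Jac(\iota)$. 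The final assertion is then immediate: $K^d(C)=\pi^{-1}([C])=(\alb_v)^{-1}_{F_0}(0,0)\cap\Jac^d(C)=\alpha^{-1}(0)\cap\Jac^d(C)=\ker\Jac(\iota)$, which moreover is connected by Lemma \ref{pullinjective}.

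I expect the main obstacle to be one of conventions rather than of substance: one must be sure that the ``$c_2$'' in \refb{albha} is the genuine second Chern class in the Chow group and not merely its cohomology class (the group $\mathrm{CH}_0(S)$ being far larger than $H^4(S,\IZ)$), that ``$\sum$'' denotes the Albanese summation, and that the degree shift and base point implicit in $\Jac^d(C)\hookrightarrow M_H(v)$ agree with those in the definition of the Abel--Jacobi map. Keeping track of the resulting constant translation $a$ and of a possible sign is the only delicate point, and neither affects the identification of $K^d(C)$ with $\ker\Jac(\iota)$.
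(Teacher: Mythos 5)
Your proposal is correct and is, if anything, more careful than the paper's own proof, which consists of the single assertion that the formula follows from the definition of $\alpha$ in \refb{albha} together with the identification of the fibre of the support map with $\Jac^d(C)$, and declares the kernel statement obvious. You actually verify what the paper leaves implicit: you identify the fibre as $\{\iota_\star L : L \in \Pic^d(C)\}$, check that the determinant component of the Albanese is constant there, compute $c_2(\iota_\star L)$ as a zero--cycle via the sequence $0\to\iota_\star\ko_C\to\iota_\star\ko_C(E)\to Q\to 0$, and then use rigidity of morphisms of abelian varieties plus the universal property to conclude. The one point to tidy up is the sign: your computation yields $\sum c_2\bigl(\iota_\star\ko_C(E)\bigr) = a - \sum_i n_i x_i$, and a global sign is the automorphism $(-1)_S$ of $S$, not a translation, so it cannot literally be ``absorbed into the base--point normalisation''; it has to be absorbed either into the choice of identification of the torsor $\Jac^d(C)$ with the group $\Jac(C)$ or into the sign convention in Yoshioka's definition of $\alb_v$ (which is where the paper's stated formula comes from). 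Since $\ker(-\phi)=\ker(\phi)$, this does not affect the identification $K^d(C)=\ker\Jac(\iota)$, which is the only part of the lemma used afterwards, so your argument stands; it simply establishes the displayed formula up to that sign and the constant $a$, exactly as you flag in your last paragraph.
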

		\begin{proof} This follows from the definition of the map $\alpha$, see \refb{albha}. If $F \in \Jac^d(C) \subset M_H(v)$, then $\alpha$ takes on $\Jac^d(C)$ the form  $\ko_C(\sum\nolimits_i n_i x_i) \mapsto \sum\nolimits_i n_i x_i$ which is the map induced by $\iota$ and the universal property of the Jacobian, see subsection \ref{casejacobian}. The second statement is obvious.	\end{proof}
		
		By Lemma \ref{pullinjective} we know that we can see the dual $S^\vee = \Pic^0(S)$ as an abelian subvariety of $\Jac^d(C)$, as the pullback $\iota^\star : \Pic^0(S) \hookrightarrow \Jac(C) \cong \Jac^d(C)$ is an embedding. We conclude that we are in the situation of \ref{casejacobian} and therefore have the following.
		
		\begin{prop} In the situation above, $K^d(C)$ and $S^\vee$ are a pair of complementary abelian subvarieties in the principally polarized abelian variety $\Jac^d(C)$.
		\end{prop}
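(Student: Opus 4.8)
The plan is to reduce the assertion to Lemma~\ref{comp} of subsection~\ref{casejacobian}, which already supplies the analogous statement for the degree $0$ Jacobian $\Jac(C) = \Pic^0(C)$; the only real work is to identify $\Jac^d(C)$ with $\Jac(C)$ as a principally polarized abelian variety in a way compatible with $\alpha$ and $K^d(C)$. First I would observe that $\Jac^d(C)$ is a torsor under $\Jac(C)$, and that via the canonical identification $\NS(\Jac^d(C)) = \NS(\Jac(C))$ the principal polarization $\Theta$ is a well defined class on $\Jac^d(C)$, so that ``the principally polarized abelian variety $\Jac^d(C)$'' is meaningful. By the previous Lemma the morphism $\alpha : \Jac^d(C) \to S$ is the map $\ko_C(\sum_i n_i x_i) \mapsto \sum_i n_i x_i$; it is the equivariant extension to $\Jac^d(C)$ of the homomorphism $\Jac(\iota) : \Jac(C) \to S$ of \refb{jaci}, it is surjective (as $D$, hence $C$, is ample), and $K^d(C) = \alpha^{-1}(0)$.

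Next, since $\pi : K_H(v) \to |D|$ is surjective and $C$ corresponds to a smooth fibre, $K^d(C)$ is nonempty; fix $L_0 \in K^d(C)$, \ie a line bundle of degree $d$ on $C$ with $\alpha(L_0) = 0$. Using $L_0$ as origin, translation by $L_0^{-1}$ gives an isomorphism of principally polarized abelian varieties $\tau : \Jac^d(C) \stackrel{\sim}{\longrightarrow} \Jac(C)$, and the equivariance of $\alpha$ together with $\alpha(L_0) = 0$ yields $\alpha = \Jac(\iota) \circ \tau$. Hence $\tau$ carries $K^d(C) = \alpha^{-1}(0)$ isomorphically onto $\ker \Jac(\iota) = K(C)$, and it carries the abelian subvariety $\iota^\star(S^\vee) \subset \Jac^d(C)$ (an embedding by Lemma~\ref{pullinjective}, the identification $\Jac(C) \cong \Jac^d(C)$ used in the preceding discussion to realise $S^\vee$ inside $\Jac^d(C)$ being precisely $\tau^{-1}$) onto $\iota^\star(S^\vee) \subset \Jac(C)$; note that under this choice both subvarieties pass through the common origin $L_0$, respectively $\ko_C$.

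Finally I would invoke Lemma~\ref{comp}: $K(C)$ and $\iota^\star(S^\vee)$ form a pair of complementary abelian subvarieties of $(\Jac(C), \Theta)$. Since the defining property of a complementary pair --- realised in \refb{complement} through $\ker \iota^\vee$ after identifying an abelian variety with its dual via $\phi_\Theta$ --- is preserved by isomorphisms of principally polarized abelian varieties, transporting along $\tau^{-1}$ shows that $K^d(C)$ and $S^\vee$ are a pair of complementary abelian subvarieties of $\Jac^d(C)$, which is the claim. I expect the only delicate point to be the torsor bookkeeping of the second paragraph, namely arranging the translation so that $\alpha$ becomes exactly $\Jac(\iota)$ and hence $K^d(C)$ is sent to $K(C)$; there is no new geometric input beyond what is already contained in subsection~\ref{casejacobian}.
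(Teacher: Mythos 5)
Your proposal is correct and follows essentially the same route as the paper: the paper's own proof is simply to identify $\Jac^d(C)$ with $\Jac(C)$ (as in the preceding discussion) and then quote Lemma~\ref{comp}. You merely make explicit the torsor bookkeeping --- choosing an origin $L_0 \in K^d(C)$ so that $\alpha$ becomes $\Jac(\iota)$ --- which the paper leaves implicit.
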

		\begin{proof} Follows immediately from Lemma \ref{comp}. \end{proof}

		\begin{lem}\cite[Lem.~5.4]{benni1} \label{picard} Let $A$ be an abelian variety. If $\End(A) = \IZ$ then its Picard number is $\rho(A) = 1$. \end{lem}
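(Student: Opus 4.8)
The plan is to invoke the classical identification between the Néron--Severi group of an abelian variety and the Rosati--symmetric part of its rational endomorphism algebra. First I would fix a polarization $L_0$ on $A$ --- it exists since $A$ is projective --- and recall from \cite[Prop.~1.2.6]{BL} that the associated isogeny $\phi_{L_0} : A \to A^\vee$ has a $\IQ$--inverse. Then every line bundle $L$ on $A$ yields a rational endomorphism $\phi_{L_0}^{-1} \circ \phi_L \in \End(A) \otimes \IQ$, and this assignment induces an injective homomorphism $\NS(A) \otimes \IQ \hookrightarrow \End(A) \otimes \IQ$ whose image is exactly the subspace fixed by the Rosati involution attached to $L_0$, see \cite[5.2]{BL}. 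In particular $\rho(A) = \rk \NS(A)$ is bounded above by the $\IQ$--dimension of the Rosati--symmetric elements of $\End(A) \otimes \IQ$.

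Now I would simply feed in the hypothesis $\End(A) = \IZ$: then $\End(A) \otimes \IQ = \IQ$, so its Rosati--symmetric subspace has $\IQ$--dimension at most one, which gives $\rho(A) \le 1$. Conversely, the ample class $c_1(L_0)$ is a nonzero element of $\NS(A)$, so $\rho(A) \ge 1$. Combining the two inequalities yields $\rho(A) = 1$, as claimed.

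There is essentially no real obstacle here. The only point requiring a little care is that for a non-principal $L_0$ the map $\NS(A) \to \End(A) \otimes \IQ$ need not land in $\End(A)$ itself, so one must work with $\IQ$--coefficients throughout; but tensoring with $\IQ$ does not change the rank, so this is harmless. Thus the proof reduces to the Rosati--involution formalism of \cite[5.2]{BL} together with the projectivity of $A$; if one prefers to stay closer to the author's earlier treatment, the same statement is of course \cite[Lem.~5.4]{benni1}.
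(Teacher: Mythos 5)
Your argument is correct: the embedding of $\NS(A)\otimes\IQ$ into $\End(A)\otimes\IQ$ as the Rosati--fixed part, combined with $\End(A)\otimes\IQ = \IQ$ and the existence of an ample class, is the standard proof of this fact. The paper itself offers no proof here — it simply cites \cite[Lem.~5.4]{benni1} — and your Rosati--involution argument is exactly the expected one behind that reference, so there is nothing to add.
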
 
		
				We now consider Jacobians of curves which are contained in linear systems defined on abelian surfaces.

				\begin{theo}\label{picarda}\cite[3.B.]{cili} Let $S$ be an abelian surface, $\iota : C \hookrightarrow S$ a smooth curve and let  $$K(C) \coloneqq \ker( \Jac(C) \rightarrow S) \subset \jac(C)$$ be the kernel of the map $\jac(\iota)$ induced by the inclusion and the universal property of the Jacobian, as described in \emph{subsection \ref{casejacobian}}. Then $\End(K(C)) = \IZ$, therefore we have for the Picard number $\rho(K(C)) = 1$.   \end{theo}	
				\begin{proof} We know by Lemma \ref{pullinjective} that $K(C)$ is connected \ie a honest abelian subvariety of $\Jac(C)$. The requirement in \cite[2.II.]{cili} that $|C|$ defines a birational map on its image can be dropped, since the authors only use this to conclude that the map $\iota^{\star} : S^{\vee} \rightarrow \jac(C)$ has finite kernel. In our setting this is the case by Lemma \ref{pullinjective}. 	
					Then by \cite[3.B.]{cili} we have $\End(K(C)) = \IZ$, hence $\rho(K(C)) = 1$ for the Picard number by Lemma \ref{picard}.   	\end{proof}

		Furthermore, we can compute the polarization types of Beauville--Mukai systems of generalized Kummer type.
		
		\begin{theo}\label{prinab} The Picard number of the generic smooth fiber of a Beauville--Mukai system $\pi : X \rightarrow |D|$ of generalized Kummer $n$--type equals one. In particular we have for its polarization type $$\pol(\pi) = (1, \ldots, 1, d_1, d_2) $$
			where $\pol(D) = (d_1, d_2)$ is the type of the polarization defined by $D$. 	
		\end{theo}
		\begin{proof} Let us denote $C \in |D|$ a generic smooth curve. The fiber $F = K(C) = K^d(C)$ of $\pi$ over $C$ is given as the kernel of the map $\Jac(\iota) : \Jac^d(C) \rightarrow S$, see \refb{fiber}, where $\iota : C \hookrightarrow S$ is the inclusion. We are therefore precisely in the situation of Theorem \ref{picarda} which states that $\rho(K(C)) = 1$ for the Picard number. Let $\omega \in \kk_X$ denote a special Kähler class for the fiber $K(C)$. We are in the case of subsection \ref{casejacobian} and by Proposition \ref{kc} the abelian subvariety $K(C)$ admits a polarization $L$ of type $\pol(L) = (1, \ldots, 1, d_1, d_2)$. Since $\rho(K(C)) = 1$, we have $L = \omega|_{K(C)}$ as both are primitive. Therefore $\pol(\pi) = \pol(\omega|_F) = \pol(L) = (1,\ldots,1, d_1, d_2)$ by Proposition \ref{specialpol}. 	\end{proof}
	\end{ueber}

\end{ueber}

\begin{ueber}\textbf{Beauville--Mukai systems in the moduli.} In this section we show that there are Beauville--Mukai systems in each connected component of the moduli of Lagrangian fibrations of $\kdrei$ and generalized Kummer type. We check this in terms of the monodromy invariant.
	
	The proof of the following Proposition is similar to \cite[Ex. 3.1]{eyal2}. However, we give a detailed proof.
	
	\begin{prop}\label{bmkummer} Let $d$ be a positive integer such that $d^2$ divides $n+1$ and let $b$ an integer satisfying $\gcd(d,b) = 1$. Then there exists a Beauville--Mukai system $\pi : K_H(v) \rightarrow \IP^n$ of generalized Kummer type and a primitive isotropic class $\alpha \in H^2(K_H(v), \IZ)$ such that the following holds.
		\begin{enumerate}
			\item $\Div(\alpha) = d$,
			\item the monodromy invariant $\vartheta(\alpha)$ is represented by $(L_{n,d}, (d,b))$,
			\item $c_1(\pi^{\star}\ko_{\IP^n}(1)) = \alpha$. 
			\item Its polarization type is given by $\pol(\pi) = (1, \ldots, 1, d, \frac{n+1}{d})$.
		\end{enumerate}
	\end{prop}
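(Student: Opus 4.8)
The plan is to realize the prescribed pair $(d,b)$ by a Beauville--Mukai system attached to a generic abelian surface, to compute its polarization type via Theorem \ref{prinab}, and to read off $\Div(\alpha)$ and $\vartheta(\alpha)$ inside the Mukai lattice using Lemma \ref{welldefined}.

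First I would fix a generic abelian surface $S$ carrying a polarization $D$ of type $(d,\tfrac{n+1}{d})$; such surfaces exist because $d^2\mid n+1$, and by genericity $\NS(S)=\IZ\lambda_0$ where $\lambda_0\coloneqq\tfrac1d\,c_1(D)$ is primitive with $(\lambda_0,\lambda_0)=\tfrac{2(n+1)}{d^2}$ and $(D,D)=2n+2$ (that $\pol(D)=(d,\tfrac{n+1}{d})$ follows from Lemma \ref{typeofc} applied to $\lambda_0$). Since $\gcd(d,b)=1$ I would then choose a nonzero integer $s$ with $bs\equiv-1\pmod d$; such $s$ is automatically coprime to $d$, so $v\coloneqq(0,c_1(D),s)=(0,d\lambda_0,s)$ is a primitive positive Mukai vector with $(v,v)=2n+2\ge 6$ for $n\ge2$ (for $n=1$ one has $d=1$ and everything degenerates to the elliptic Kummer surface). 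Picking a $v$--generic ample class $H$ on $S$, the manifold $K_H(v)\subset M_H(v)$ is of generalized Kummer $n$--type by Yoshioka \cite{yoshi}, and its support morphism is a Beauville--Mukai system $\pi\colon K_H(v)\to|D|\cong\IP^n$. Put $\alpha\coloneqq c_1(\pi^{\star}\ko_{\IP^n}(1))$, so that the required identity $\alpha=c_1(\pi^{\star}\ko_{\IP^n}(1))$ holds by construction; moreover $\alpha$ is primitive and isotropic by Lemma \ref{liso} and $\pol(\pi)=(1,\ldots,1,d,\tfrac{n+1}{d})$ by Theorem \ref{prinab}.

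For the divisibility of $\alpha$ and the value of $\vartheta(\alpha)$ I would pass to the Mukai lattice. Fix an isometry $\varphi\colon H^{\bullet}(S)\xrightarrow{\ \sim\ }\mukai$. Since the monodromy invariant orbit $\iota_X$ of Theorem \ref{orbit} is canonical --- independent of the chosen moduli space of generalized Kummer type --- the composite $\iota\coloneqq\varphi\circ\Theta_v^{-1}\colon H^2(K_H(v),\IZ)\hookrightarrow\mukai$ represents $\iota_X$, and $\varphi(v)$ generates $\iota(H^2(K_H(v),\IZ))^{\bot}$. The one geometric input needed is the identification --- in analogy with the $\kdrei$ case of \cite[Ex.~3.1]{eyal2} and of \cite{benni1} --- that under $\Theta_v\colon v^{\bot}\xrightarrow{\ \sim\ }H^2(K_H(v),\IZ)$ the pullback of the hyperplane class is $\alpha=\Theta_v(w)$ with $w\coloneqq(0,0,-1)\in v^{\bot}\subset H^{\bullet}(S)$ the point class. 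Granting this, the remaining two assertions become lattice arithmetic in $\UU^{\oplus4}$: for $x=(r,c',s')\in v^{\bot}$ one has $(w,x)=r$, while membership in $v^{\bot}$ gives $d\,(c',\lambda_0)=rs$; since $\lambda_0$ is primitive in the unimodular lattice $H^2(S,\IZ)=\UU^{\oplus3}$ and $\gcd(d,s)=1$, the divisibility of $w$ in $v^{\bot}$ --- hence $\Div(\alpha)$, as $\Theta_v$ is an isometry --- equals $d$. Moreover $(w-bv)/d=(0,-b\lambda_0,-\tfrac{1+bs}{d})$ lies in $H^{\bullet}(S)$ because $bs\equiv-1\pmod d$, so $(\iota(\alpha)-b\varphi(v))/d$ is integral and $\gcd(d,b)=1$, whence $\vartheta(\alpha)=[(L_{n,d},(d,b))]$ by Lemma \ref{welldefined}(v). (With the opposite sign convention for the hyperplane class one chooses $s$ with $bs\equiv1\pmod d$ instead, and the conclusion is the same.)

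The only delicate step is the geometric identification $\alpha=\Theta_v(0,0,-1)$, that is, pinning down which Le Potier determinant line bundle on $K_H(v)$ the support morphism pulls the hyperplane class back to. I would extract this from Yoshioka's construction \cite{yoshi} of the Mukai morphism $\Theta_v$ together with the $\kdrei$ prototype \cite{eyal2}; everything else is the lattice computation above and the cited Theorem \ref{prinab}, Lemma \ref{welldefined}, and Lemma \ref{liso}.
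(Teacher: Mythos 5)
Your proposal follows essentially the same route as the paper: choose a primitive ample $L$ on an abelian surface with $(L,L)=(2n+2)/d^2$ (so $D=L^{\otimes d}$ has type $(d,\tfrac{n+1}{d})$ and the genericity of $S$ you impose is not actually needed), set $v=(0,d\,c_1(L),s)$ with $s$ an inverse of $\pm b$ modulo $d$, and deduce $\Div(\alpha)=d$, $\vartheta(\alpha)=[(L_{n,d},(d,b))]$ and the polarization type exactly as you do, via the computation in $v^{\bot}$, Lemma \ref{welldefined}(v) and Theorem \ref{prinab}. The one step you defer --- that $\Theta_v(0,0,\pm 1)$ equals the pullback of the hyperplane class --- is supplied in the paper by a short direct computation: since the quasi--universal family $\ke$ has rank zero, $\ch(\ke)=\rho\, c_1(\ke)+(\text{higher--degree terms})$, and intersecting with $\pr_S^{\star}(0,0,1)=[p\times M_H(v)]$ and pushing forward to $M_H(v)$ gives (after dividing by $\rho$ and restricting to $K_H(v)$) the divisor of sheaves whose support passes through $p$, which is visibly $\pi^{\star}c_1(\ko_{|L^d|}(1))$.
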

	
	\begin{proof} Let $S$ be an abelian surface together with primitive ample line bundle $L$ on $S$ with $(L,L) = (2n+2)/d^2$.  Set $\beta \coloneqq c_1(L)$ and let $s$ be an integer such that $sb \equiv 1 \mod d$. Then $v \coloneqq (0, d\beta, s)$ is a Mukai vector. In particular $v$ is primitive since $\beta$ is primitive and $\gcd(d,s) = 1$. Choose a $v$--generic ample class $H$. We have $(v,v) = d^2 (\beta,\beta) = 2n+2$ hence $K_H(v) \subset M_H(v)$ is irreducible holomorphic symplectic of dimension $2n$ and we obtain a Beauville--Mukai system $\pi : M_H(v) \rightarrow |L^d|$ as described in section \ref{bmgenkum}. We have Mukai's Hodge isometry
		$$\Theta \ : \ v^{\bot} \longrightarrow H^2(M_H(v), \IZ) \stackrel{r}{\longrightarrow} H^2(K_H(v),\IZ) $$
		see \refb{mukaihom} and \refb{mukaihom2}. The map $r : H^2(M_H(v), \IZ) \longrightarrow H^2(K_H(v),\IZ)$ is the restriction. Recall that the definition of $\Theta$ needs the choice of a quasi--universal family of sheaves $\ke$ on $S$ of similitude $\rho \in \IN$.
		
		Set $\alpha \coloneqq \Theta(0,0,1)$ which is clearly isotropic and define $\iota : H^2(K_H(v),\IZ) \rightarrow H^{\bullet}(S,\IZ)$ to be $\Theta^{-1}$ composed with the inclusion $v^{\bot} \hookrightarrow H^{\bullet}(S,\IZ)$. Note that $\iota$ is a representative of the monodromy invariant orbit constructed in Theorem \ref{orbit}.
		\begin{enumerate}
			\item An element $(r,c,t)$ belongs to $v^{\bot}$ if and only if $$0 \  = \  ((0,d\beta,s), (r,c,t)) \ = \ d(\beta,c) - rs  \ \Longleftrightarrow \ rs \ = \  d(\beta,c) \, .$$
			Hence $d$ divides $r$ since $\gcd(d,s) = 1$. Furthermore, we have $((0,0,1),(r,c,t)) = r$ for all $(r,c,t) \in v^{\bot}$ hence $\Div((0,0,1)) \geq d$. As the lattice of a two torus is $\UU^{\oplus 3}$ \ie in particular unimodular, we have $\Div_{H^2(S,\IZ)}(\beta) = 1$ in $H^2(S,\IZ)$. This implies that $\Div(\beta) = 1$ in $v^{\bot}$, hence we can find an element $c \in H^2(S,\IZ)$ such that $s = (c, \beta)$. Then $(d, c, 0)$ is contained in $v^{\bot}$ and $((0,0,1),(d,c,0)) = d$, hence 
			$$\Div(\alpha) \ = \ \Div(0,0,1) \ = \ d \, .$$
			\item We have $\iota(\alpha) - bv = (0,0,1) - (0,bd\beta, bs) = (0, bd\beta, 1 -bs)$ which is divisible by $d$ since $sb \equiv 1 \mod d$ by assumption. By Lemma \ref{welldefined} (v) the monodromy invariant $\vartheta(\alpha)$ is represented by $(L_{n,d}, (d,b))$.
			
			\item Let $\omega = [p] \in H^4(S,\IZ)$ denote Poincare dual of a point $p \in S$. By our notation we have $\omega = (0,0,1) = \omega^{\vee} \in H^{\bullet}(S)$. Since $S$ is an abelian surface, one has $\sqrt{\td(S)} = 1$, hence $\sqrt{\td(S)} \omega = \omega$. Note that $\ke$ is a sheaf of rank zero, hence $\ch(\ke) = \rho c_1(\ke) + \xi = \rho [D] + \xi$ for some divisor $D$ in $S \times M_H(v)$ and for some terms $\xi$ of higher degree. Furthermore,  $(\pr_{S})^{\star} \omega = [p \times M_H(v)] \in H^4(S \times M_H(v), \IZ)$ and $[(\pr_{M_H(v)})_{!}(\xi \cdot [p \times M_H(v)])]_2 = 0$ due to degree reasons. Then we have 
			\begin{align*} \Theta(0,0,1) \ & = \ r\left( (\pr_{M_H(v)})_{!}\left(D \cdot [p \times M_H(v)]\right) \right) \\ 
			\ & = \ r \left( [F \in M_H(v) \ | \ p \in \supp(F)] \right) \\
			\ & = \ [F \in K_H(v) \ | \ p \in \supp(F)]  \\
			\ & = \ \pi^{\star}[C \in |L^d| \ | \ p \in C] \\
			\ & = \ \pi^{\star}c_1(\ko_{|L^d|}(1)) \ = \ c_1( \pi^{\star} \ko_{|L^d|}(1)) \end{align*} 
			since $V \coloneqq \left\{ C \in |L^d| \ | \ p \in C \right\}$ is a hyperplane in a projective space, hence $[V] = c_1(\ko_{|L^d|}(1))$.  
			\item This follows directly from Theorem \ref{prinab} since $\pol(L) = (1, \frac{n+1}{d^2})$ by Lemma \ref{typeofc} \ie $\pol(dL) = (d, \frac{n+1}{d})$.
		\end{enumerate} \end{proof}
			
\end{ueber}		

\begin{ueber}\textbf{Geometric interpretation of the monodromy invariant.} As in the $\kdrei$--case we have the following \emph{connected component of the moduli of generalized Kummer fibrations}.

	Let $\Lambda$ denote a lattice of signature $(3, b_2 -3)$ which is isometric to the second integral cohomology of an irreducible holomorphic symplectic manifold.
	
	Let $\gM_{\Lambda}$ denote the corresponding moduli space of isomorphism classes of marked pairs $(X, \eta)$ \ie $X$ is an irreducible holomorphic symplectic manifold of the fixed deformation type and $\eta : H^2(X,\IZ) \rightarrow \Lambda$ is a marking. Choose a connected component $\gM^{\circ}_{\Lambda}$ of $\gM_{\Lambda}$ and consider the period map 
	$$ \kp \ : \ \gM^{\circ}_{\Lambda} \longrightarrow \Omega_{\Lambda} \, , \ \ \ (X,\eta) \longmapsto [\eta(H^{2,0}(X))] \, . $$ 
	Choose the orientation of $\tilde{\kc}_{\Lambda}$ compatible to $\gM^{\circ}_{\Lambda}$ in sense of Definition \ref{compatible}. 
	
	Let $\lambda \in \Lambda$ be a nontrivial isotropic class. After a possible change of the sign of $\lambda$ (cf. \ref{foriso}), we have a distinguished and compatible connected component $$\Omega^+_{\lambda^\bot} \ \coloneqq \ \left\{ p \in \Omega_{\lambda^\bot} \ | \ \lambda \in \partial \kc_p \right\}$$ of the hyperplane section $\Omega_{\lambda^{\bot}} = \Omega_{\Lambda} \cap \lambda^{\bot}$, see \ref{foriso}.
	Then define
	$$ \gM^{\circ}_{\lambda^\bot} \ \coloneqq \ \kp^{-1}\left( \Omega^+_{\lambda^\bot} \right) \ = \ \left\{ (X,\eta) \in \gM^{\circ}_{\Lambda} \ | \ \eta^{-1}(\lambda) \text{ is of type } (1,1) \text{ and in  } \partial \kc_X \ \right\} \, .$$
	which is  a connected hypersurface of $\gM^{\circ}_{\Lambda}$ by \cite[Lem.~4.4]{eyal2} and {\cite[Cor.~5.11]{eyalprime}}.
	Consider the nef subspace $$ \gU^{\circ}_{\lambda^\bot} \ \coloneqq \ \left\{ (X,\eta) \in \gM^{\circ}_{\lambda^\bot} \ | \ \eta^{-1}(\lambda) \text{ is nef}  \right\} \, . $$

As in the $\kdrei$--type we have the following result for the generalized Kummer case, which can be proved exactly in the same way as in the $\kdrei$ case with use of \cite[Cor.~1.1]{matiso}.
	
	\begin{theo}\cite[Thm.~3.7]{benni1}\label{modulil} Let $\lambda$ be a primitive and isotropic element in the $\kdrei$ or generalized Kummer lattice. The space $\gU^{\circ}_{\lambda^\bot}$ in the corresponding connected component $\gM^\circ_{\Lambda}$ of the moduli of marked pairs has the following properties. \begin{enumerate} 
			\item It parametrizes isomorphism classes of marked pairs $(X,\eta)$ of $\gM^{\circ}_{\Lambda}$ with $X$ of $\kdrei$ or generalized Kummer type, respectively, admitting a Lagrangian fibration $f : X \rightarrow \IP^n$ such that $$\eta\left(c_1\left(f^{\star} \ko_{\IP^n}(1)\right) \right) \ = \ \lambda \, .$$ 
			\item It is smooth of dimension $20$ for the $\kdrei$ and of dimension $4$ for the generalized Kummer case. Furthermore, it is open in $\gM^{\circ}_{\lambda^\bot}$. 
			\item It is connected. 
		\end{enumerate}\end{theo}

We refer to this space $\gU^\circ_{\lambda^\bot}$ as a \emph{connected component of the moduli of Lagrangian fibrations}.

We can state the geometric interpretation of the monodromy invariant. 

\begin{prop} Let $f_i : X_i \rightarrow \IP^n$, $i = 1,2$, denote two Lagrangian fibrations of generalized Kummer type. Let $\Lambda$ denote the generalized Kummer lattice and set $L_i \coloneqq f_i^{\star}\ko_{\IP^n}(1)$. Then the following statements are equivalent.
	\begin{enumerate}
		\item The Lagrangian fibrations $f_i$ are deformation equivalent.
		\item There exist markings $\eta_i : H^2(X_i, \IZ) \rightarrow \Lambda$ such that the marked pairs $(X_i, \eta_i)$ are contained in the same connected component $\gU^{\circ}_{\lambda^\bot}$ for a primitive isotropic in the generalized Kummer lattice.
		\item \emph{\cite[Lem.~5.17]{eyalprime}} We have $\Div(c_1(L_1)) = \Div(c_1(L_2))$ for the corresponding divisibilities and $\vartheta(c_1(L_1)) = \vartheta(c_1(L_2))$ for the monodromy invariant.
	\end{enumerate}
\end{prop}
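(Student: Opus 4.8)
The plan is to split the three-way equivalence into a monodromy-theoretic part $(1)\Leftrightarrow(3)$ and a moduli-theoretic part $(1)\Leftrightarrow(2)$. Two facts about $c_1(L_i)$ will be used throughout: by Lemma \ref{liso} it is primitive, isotropic and nef, hence lies in $\bar{\kc}_{X_i}$, so by Lemma \ref{isopos} it pairs strictly positively with every K\"ahler class on $X_i$ and, being isotropic and nef, it lies in $\partial\kc_{X_i}$; and its divisibility $\Div(c_1(L_i))$, being read off the lattice $H^2(X_i,\IZ)$, is a deformation invariant, so that $(1)$ forces $\Div(c_1(L_1))=\Div(c_1(L_2))=:d$ and only once this common $d$ is fixed do both classes lie in $I_d(X_i)$ and have a well-defined invariant $\vartheta(c_1(L_i))\in\Sigma_{n,d}$. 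With this in place, $(1)\Leftrightarrow(3)$ is obtained as follows: by Proposition \ref{defogen}, $(1)$ is equivalent to deformation equivalence of the pairs $(X_i,L_i)$; by Theorem \ref{moninv} (faithfulness of $\vartheta$) and Proposition \ref{computeinvariant}(i), deformation equivalence of the class-pairs $(X_i,c_1(L_i))$ is equivalent to $\vartheta(c_1(L_1))=\vartheta(c_1(L_2))$; and to pass from the class-pairs to the line-bundle-pairs one invokes Proposition \ref{computeinvariant}(ii), whose hypothesis $(\omega_i,c_1(L_i))>0$ is exactly what the positivity fact above provides. This is the content attributed to \cite[Lem.~5.17]{eyalprime} in item $(3)$, now available over the generalized Kummer case thanks to Theorem \ref{moninv}.

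For $(1)\Rightarrow(2)$ I would take a family of Lagrangian fibrations $\phi:\kX\to P$ over a connected base $S$ (Definition \ref{deflf}) with $\phi_{t_i}=f_i$ and relative polarization restricting to $c_1(L_i)$ on $\kX_{t_i}$, and a path $\gamma$ in $S$ from $t_1$ to $t_2$; the induced parallel transport operator $P_\gamma:H^2(X_1,\IZ)\to H^2(X_2,\IZ)$ sends $c_1(L_1)$ to $c_1(L_2)$. Fix a marking $\eta_1$ with $(X_1,\eta_1)$ in a connected component $\gM^\circ_\Lambda$ and set $\lambda:=\eta_1(c_1(L_1))$; since $c_1(L_1)$ is nef and $\eta_1$ is compatible with the orientation of $\ctilde_\Lambda$ one has $\lambda\in\partial\kc_p$ for $p=\kp(X_1,\eta_1)$, so $(X_1,\eta_1)\in\gU^\circ_{\lambda^\bot}$ by Theorem \ref{modulil}(i). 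Setting $\eta_2:=\eta_1\circ P_\gamma^{-1}$, the pair $(X_2,\eta_2)$ lies in the same component $\gM^\circ_\Lambda$ (a parallel transport operator lifts to a path in $\gM_\Lambda$) and $\eta_2^{-1}(\lambda)=P_\gamma(c_1(L_1))=c_1(L_2)$ is nef, hence $(X_2,\eta_2)\in\gU^\circ_{\lambda^\bot}$ too. Conversely, reading $(2)$ with $\eta_i(c_1(L_i))=\lambda$ so that $f_i$ is the Lagrangian fibration induced by the nef class $\eta_i^{-1}(\lambda)$, Theorem \ref{modulil} tells us that $\gU^\circ_{\lambda^\bot}$ is connected and is a connected component of the moduli of Lagrangian fibrations, so a path in $\gU^\circ_{\lambda^\bot}$ joining $(X_1,\eta_1)$ to $(X_2,\eta_2)$ exhibits $f_1$ and $f_2$ as deformation equivalent, which gives $(2)\Rightarrow(1)$.

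I expect the moduli-theoretic half to be the main obstacle, specifically making precise that over $\gU^\circ_{\lambda^\bot}$ there is a relative Lagrangian fibration whose fiber over $(X_i,\eta_i)$ is $f_i$ itself rather than merely some Lagrangian fibration with the prescribed pullback class; this rests on the fact — already underlying Theorem \ref{modulil} — that a primitive nef isotropic class determines the Lagrangian fibration it induces, via Matsushita's theorem together with \cite[Cor.~1.1]{matiso}. A minor secondary point is fixing the sign of $\lambda$ in $(1)\Rightarrow(2)$, which is automatic because $c_1(L_i)$ is nef and parallel transport preserves the natural orientation of the positive cone. Otherwise the argument is a direct assembly of the cited results, and I expect the write-up to amount, as the author suggests, to noting that the $\kdrei$-type reasoning of \cite{benni1} and \cite[Lem.~5.17]{eyalprime} transfers once Theorem \ref{moninv} is used in place of its $\kdrei$-type counterpart.
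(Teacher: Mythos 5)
Your argument is correct; note that the paper itself states this proposition without any proof, leaving it as a direct consequence of the surrounding results, and your write-up is exactly the intended assembly: Proposition \ref{defogen} plus Proposition \ref{computeinvariant} and the faithfulness from Theorem \ref{moninv} for the equivalence of (1) and (3), and Theorem \ref{modulil} together with parallel transport of the nef class for the equivalence of (1) and (2). You also correctly identify the two points the paper leaves implicit --- that (2) must be read with $\eta_i(c_1(L_i)) = \lambda$, and that the equality of divisibilities in (3) is what makes the comparison of $\vartheta$-values meaningful --- so nothing is missing.
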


\end{ueber}

			\section{Polarization types of generalized Kummer fibrations}\label{computation}
			
			We have now gathered everything to compute the polarization type of Lagrangian fibrations of generalized Kummer type. 
			
			\begin{theo}\label{mainpol} Let $f : X \rightarrow \IP^n$ be a Lagrangian fibration of generalized Kummer type. Then for the polarization type $\pol(f)$ we have 
				$$ \pol(f) \ = \ \left(1, \ldots, 1, d, \frac{n+1}{d}\right) $$
				where $d \coloneqq \Div(c_1(f^\star \ko_{\IP^n}(1)))$ denotes the divisibility of the associated element in the lattice $H^2(X,\IZ)$.
			\end{theo}
			\begin{proof} Let $f : X \rightarrow \IP^n$ denote a Lagrangian fibration of generalized Kummer type and set $L \coloneqq f^{\star} \ko_{\IP^n}(1)$. Then $\lambda \coloneqq c_1(L)$ is primitive and isotropic by Lemma \ref{liso} with respect to the Beauville--Bogomolov quadratic form. Let $d \coloneqq \Div(\lambda)$ denote the divisibility of $\lambda$, note that by Lemma \ref{welldefined} $d^2$ divides $n+1$. Consider the monodromy invariant $\vartheta : I_d(X) \rightarrow \Sigma_{n,d}$ as in Theorem \ref{moninv}. By Lemma \ref{welldefined} (v) there exists an integer $b$ such that $\vartheta(\lambda)$ is represented by $(L_{n,d}, (d,b))$ and we have $\gcd(d,b) = 1$. 
				
				By Theorem \ref{bmkummer} we have a Beauville--Mukai system $\pi : X' \rightarrow \IP^n$ of generalized Kummer type, respectively, together with a primitive isotropic class $\alpha \in H^2(X', \IZ)$ such that $\Div(\alpha) = d$, $L' \coloneqq \pi^{\star} \ko_{\IP^n}(1)$ satisfies $c_1(L') = \alpha$ and $\vartheta(\alpha)$ is represented also by $(L_{n,d}, (d,b))$ \ie $\vartheta(\alpha) = \vartheta(\lambda)$.
				
				By Lemma \ref{isopos} we have $(\omega, L) > 0$ and $(\omega', L') > 0$ for Kähler classes $\omega$ on $X$ and $\omega'$ on $X'$ as $L$ and $L'$ are isotropic and nef, therefore are contained in $\bar{\kk}_X \subset \bar{\kc}_X$ and $\bar{\kk}_{X'} \subset \bar{\kc}_{X'}$ respectively. Hence we can apply Lemma \ref{computeinvariant} to see that the pairs $(X, L)$ and $(X', L')$ are deformation equivalent in the sense of Definition \ref{pairdef}.
				By Proposition \ref{defogen} the Lagrangian fibrations $\pi$ and $f$ are deformation equivalent.
			 By Theorem \ref{mainpolar} and Theorem \ref{bmkummer} we have
				$$ \pol(f) \ = \ \pol(\pi) \ = \  \left(1, \ldots, 1, d, \frac{n+1}{d}\right)$$ 
				which concludes the proof. \end{proof}

\bibliographystyle{alpha}

\end{document}